\let\epsilon=\varepsilon
\let\ep=\epsilon
\let\phi=\varphi
\let\al=\alpha \let\be=\beta \let\Ga=\Gamma \let\De=\Delta
\let\de=\delta
  \let\ga=\gamma
\let\Om=\Omega  
\let\si=\sigma  
\let\la=\lambda  
\let\tilde=\widetilde
\newcommand{\abs}[1]{\lvert#1\rvert}
\newcommand{\bigpar}[1]{\bigl(#1\bigr)}
\newcommand{\Bigpar}[1]{\Bigl(#1\Bigr)}
\newcommand{\bigcro}[1]{\bigl[#1\bigr]}
\newcommand{\norm}[1]{\|#1\|}
\newcommand{\bignorm}[1]{\bigl\|#1\bigr\|}
\newcommand{\PE}[1]{{\lfloor#1\rfloor}}
\newcommand{\field}[1]{\mathbb{#1}}
\newcommand{\R}{\field{R}}
\newcommand{\F}{{\mathscr{F}}}
\def\der^#1_#2{\frac{\partial^{#1}}{\partial {#2}^{#1}}}
\newcommand{\ind}[1]{\mathbf 1_{#1}}
\newcommand{\set}[1]{\left\{#1\right\}}
\theoremstyle{plain}
\newtheorem{theorem}{Theorem} \newtheorem{corollary}{Corollary}
\newtheorem{lemma}{Lemma}
\newtheorem{prop}{Proposition}
{\bf}{\rm}%
\theoremstyle{definition}
\theoremstyle{remark}
\newtheorem{remark}{Remark}
\newtheorem{definition}{Definition}
\newcommand{\E}{\field{E}}
\newcommand{\T}{{\!\top}}
\newcommand{\PHI}{\boldsymbol{\phi}}
\newcommand{\PSI}{\boldsymbol{\psi}}
\begin{document}

%\begin{frontmatter}

% "Title of the paper"
\title{Scaling limits for Hawkes processes and application to financial statistics}%\protect\thanksref{T1}
%\runtitle{Scaling limits for Hawkes processes}
%\thankstext{T1}{\`a changer}

% indicate corresponding author with \corref{}
% \author{\fnms{John} \snm{Smith}\corref{}\ead[label=e1]{smith@foo.com}\thanksref{t1}}
% \thankstext{t1}{Thanks to somebody} 
% \address{line 1\\ line 2\\ printead{e1}}
% \affiliation{Some University}
%\begin{aug}

\author{E. Bacry\footnote{CMAP CNRS-UMR 7641 and \' Ecole Polytechnique, 91128 Palaiseau, France},\;S. Delattre\footnote{Universit\'e Paris Diderot and LPMA CNRS-UMR 7599, Bo\^ite courrier 7012
75251 Paris, France},\; M. Hoffmann\footnote{ENSAE-CREST and LAMA CNRS-UMR 8050, 3, avenue Pierre Larousse, 92245 Malakoff, France}\; and J.F. Muzy\footnote{SPE CNRS-UMR 6134 and Universit\'e de Corte, 20250 Corte, France}}

%\author{\fnms{Emmanuel} \snm{Bacry}%\thanksref{m1}
%\ead[label=e1]{emmanuel.bacry@polytechnique.fr}}
%\address{Ecole Polytechnique, 91128 Palaiseau, France\\ \printead{e1}}
%\affiliation{CMAP CNRS-UMR 7641 and \' Ecole Polytechnique}
%%\and
%\author{\fnms{Sylvain} \snm{Delattre}\ead[label=e2]{delattre@math.jussieu.fr}}
%\address{Universit\'e Denis Diderot - Bo\^ite courrier 7012
%75251 Paris, France\\ \printead{e2}}
%\affiliation{Universit\'e Paris Diderot and LPMA CNRS-UMR 7599}
%
%\author{\fnms{Marc} \snm{Hoffmann}\corref{}\ead[label=e3]{marc.hoffmann@ensae.fr}}
%\address{ENSAE, 3, avenue Pierre Larousse, 92245 Malakoff, France\\ \printead{e3}}
%\affiliation{ENSAE-CREST and LAMA CNRS-UMR 8050}
%
%\author{\fnms{Jean-Fran\c ois} \snm{Muzy}\ead[label=e4]{muzy@univ-corse.fr}}
%\address{Universit\'e de Corse, 20250 Corte, France \\ \printead{e4}}
%\affiliation{SPE CNRS-UMR 6134 and Universit\'e de Corte}

%\affiliation{Ecole Polytechnique and CNRS\thanksmark{m1} and Another University\thanksmark{m2}}

%\and
%\author{\fnms{???} \snm{???}\ead[label=e2]{???}}
%\address{\printead{e2}}
%\affiliation{Ecole Polytechnique and CNRS}

%\runauthor{E.\ Bacry et al.}
%\end{aug}
\date{}
\maketitle

\begin{abstract}
We prove a law of large numbers and a functional central limit theorem for multivariate Hawkes processes observed over a time interval $[0,T]$ in the limit $T \rightarrow \infty$. We further exhibit the asymptotic behaviour of the covariation of the increments of the components of a multivariate Hawkes process, when the observations are imposed by a discrete scheme with mesh $\Delta$ over $[0,T]$ up to some further time shift $\tau$. The behaviour of this functional depends on the relative size of $\Delta$ and $\tau$ with respect to $T$ and enables to give a full account of the second-order structure. As an application, we develop our results in the context of financial statistics. We introduced in \cite{BDHM1} a microscopic stochastic model for the variations of a multivariate financial asset, based on Hawkes processes and that is confined to live on a tick grid. We derive and characterise the exact macroscopic diffusion limit of this model and show in particular its ability to reproduce important empirical stylised fact such as the Epps effect and the lead-lag effect. Moreover, our approach enable to track these effects across scales in rigorous mathematical terms. 

\end{abstract}

%\begin{keyword}[class=AMS]
%\kwd[Primary ]{}
%\kwd{}
%\kwd[; secondary ]{}
%\end{keyword}

%\begin{keyword}
%\kwd{}
%\kwd{}
%\end{keyword}

%\end{frontmatter}

% AOS,AOAS: If there are supplements please fill:
%\begin{supplement}[id=suppA]
%  \sname{Supplement A}
%  \stitle{Title}
%  \slink[url]{http://lib.stat.cmu.edu/aoas/???/???}
%  \sdescription{Some text}
%\end{supplement}

\noindent {\it Keywords:} Point processes. Hawkes processes. Limit theorems. Discretisation of stochastic processes.

\noindent {\it Mathematical Subject Classification:} 60F05, 60G55, 62M10.\\

\section{Introduction}

\subsection{Motivation and setting}

Point processes have long served as a representative model for event-time based stochastic phenomena that evolve in continuous time. A comprehensive mathematical development of the theory of point processes can be found in the celebrated textbook of Daley and Vere-Jones \cite{DVJ1}, see also the references therein. In this context, mutually exciting processes form a specific but quite important class of point processes that are mathematically tractable and widely used in practice. They were first described by Hawkes in 1971 \cite{Hawkes71, Hawkes71bis} and according to \cite{DVJ1}: ``Hawkes processes figure widely in applications of point processes to seismology, neurophysiology, epidemiology, and reliability... One reason for their versatility and popularity is that they combine in one model both a cluster process representation and a simple conditional intensity representation, which is moreover linear. It comes closest to fulfilling for point processes, the kind of role that the autoregressive model plays for conventional time series." Informally, to a multivariate $d$-dimensional counting process $N = (N_1,\ldots, N_d)$ with values in $\mathbb{N}^d$ is associated an intensity function $(\lambda_1,\ldots, \lambda_d)$ defined by
$$P\big(N_{i}\;\text{has a jump in}\;[t,t+dt]\;\big|\;\F_t\big)=\lambda_{i,t}\,dt,\;\;i=1,\ldots, d$$
where $P$ stands for probability and $\F_t$ is the sigma-field generated by $N$ up to present time $t$. A multivariate Hawkes process has intensity
\begin{equation} \label{intensiteHawkes}
 \la_{i,t} = \mu_{i} + \int_{(0,t)} \sum_{j=1}^d \phi_{ij}(t-s) dN_{j,s},\;i=1,\ldots,d
\end{equation}
and is specified by $\mu_i \in \R_+ = [0,\infty)$ and for $i=1,\ldots, d$, the $\varphi_{ij}$ are functions from $\R_+$ to $\R_+$. More in Section \ref{def Hawkes} below for rigorous definitions.

The properties of Hawkes processes are fairly well known: from a probabilistic point a view, the aforementioned book of Daley and Vere-Jones \cite{DVJ1} gives a concise synthesis of earlier results published by Hawkes \cite{Hawkes71, Hawkes71bis, Hawkes72} that focus on spectral analysis following Bartlett \cite{Bartlett} and cluster representation, see Hawkes and Oakes \cite{HO74}. From a statistical perspective, Ogata studied in \cite{Ogata78} the maximum likelihood estimator and showed that parametric experiments generated by the observation of Hawkes processes are regular in the sense of Fisher information (see for instance \cite{VdV} for a modern formulation of statistical regularity). Ogata also studied the numerical issue of exact simulation of Hawkes processes \cite{Ogata81}, and this topic has known several developments since, see \cite{Moller} for a review. Recently, the nonparametric estimation of the intensity functions has been investigated by Reynaud-Bouret and Schbath \cite{RBS} and Al Dayri {\it et al.} \cite{ABM} in dimension $d=1$.\\ 

However, in all these papers and the references therein, the focus is on the  ``microscopic properties" of Hawkes processes, {\it i.e.} their infinitesimal evolution, possibly under a stationary regime or for a large time horizon $[0,T]$ in order to guarantee a large number of jumps for statistical inference purposes. In the present work, we are rather interested in the ``macroscopic properties"  of Hawkes processes, in the sense of obtaining a limit behaviour for the multivariate process $(N_{Tv})_{v \in [0,1]}$ as $T\rightarrow \infty$, for a suitable normalisation. Our interest originates in financial data modelling: in \cite{BDHM1}, we introduced a stochastic model for the price $S= (S_1,\ldots, S_n)$ of a multivariate asset, based on a $d=2n$ dimensional Hawkes process of the form \eqref{intensiteHawkes}, with representation
\begin{equation} \label{rep prix}
%S=\big(
S_1=N_1-N_2,\ S_2 = N_3-N_4,\ldots,\ S_n = N_{d-1}-N_d.
%\big).
\end{equation}
In this context, the  fact that the $S_i$ take values on  $\mathbb{Z}$ accounts for the discretness of the price formation under a limit order book.   
If we take $\mu_{2i-1}=\mu_{2i}$ for every $1 \leq i \leq d$, the process $S$ is centred and the mutually exciting properties of the intensity processes $\lambda_i$ under \eqref{intensiteHawkes} allow to reproduce empirically microstructure noise and the Epps effect, as demonstrated in \cite{BDHM1}. Microstructure noise -- a major stylised fact in high frequency financial data (see {\it e.g.} \cite{ABDL, AMZ, AMZbis, RobRos, Ros, barn}) -- is characterised by the property that in microscopic scales\footnote{when the data are sampled every few seconds or less.}, an upward change of price for the component $S_i$ is more likely to be followed by a downward jumps of $S_i$ and {\it vice versa}. Similarly, a jump of a component $N_{2i-1}$ or $N_{2i}$ of  $S_i$ at time $t$ will increase the conditional probability, given the history up to time $t$, that a jump occurs for the component $N_{2j-1}$ or $N_{2j}$ of  $S_{j}$ through the exciting effect of the kernels $\varphi_{2i-1,2j-1}$, $\varphi_{2i-1,2j}$, $\varphi_{2i,2j-1}$ or $\varphi_{2i,2i}$, thus creating a certain dependence structure and the Epps effect, another well-document stylised fact implying that the empirical correlation bewteen two assets vanishes on microscopic scales \cite{Epps}. However, both microstructure noise and the Epps effect vanish at coarser scales where a standard diffusion regime dominates. Thus, a certain macroscopic stability property is desirable, namely the property that $S$ behaves like a continuous diffusion on coarse scales.  Also, one would like to be able to track the aforementioned microscopic effects produced by the kernels $\varphi_{ij}$ in the diffusion limit of $S$. This is the main topic of the paper and also its novelty, as far as the modelling angle is concerned.\\ 

The importance of mutually exciting processes has expanded in financial econometrics over the last years, see \cite{ASCDL, BH, ELL, Hewlett} among others. From a wider perspective however, a mathematical analysis of the limiting behaviour of Hawkes processes has a relevance beyond finance: apart from the interest of such a study for its own sake, one could also presumably apply asymptotic results in other application fields. These include  seismology of course, for which Hawkes processes were originally introduced  (see \cite{ZOVJ} for instance) and also the citation we used above from the monograph \cite{DVJ1}; in a more speculative or prospective way, one could mention recent related areas such as traffic networks \cite{BM} or genomic analysis \cite{RBS} where mutually exciting processes take a growing importance. 

\subsection{Main results and organisation of the paper}

Because of this versatility in the use of Hawkes processes, we present our results in a fairly general setting in Sections \ref{def Hawkes}, \ref{Brownian limits} and \ref{signature plot} with relatively minimal assumptions. It is only in the subsequent Section \ref{applications} that we consistently apply our results to the multivariate price model $S$ described by \eqref{intensiteHawkes}-\eqref{rep prix} for financial data modelling.\\

In Section \ref{def Hawkes}, we recall the construction of Hawkes processes in a rigourous and general setting. We state in particular the fact that as soon as the kernels $\varphi_{ij}$ that define the intensity process $\lambda$ are locally integrable, the existence of $N$ with uniqueness in law is guaranteed on a rich enough space, as follows from the theory of predictable projection of integer-valued random measures of Jacod \cite{JJ75}. We state in Section \ref{Brownian limits} a law of large numbers (Theorem \ref{LG}) and a functional central limit theorem (Theorem \ref{TCL}). The law of large numbers takes the form
\begin{equation} \label{pres LLN}
\sup_{v \in [0,1]}\big\|T^{-1} N_{Tv} - v\,({\bf Id-K})^{-1} \mu\big\|\rightarrow 0 \;\; \text{as}\;\;T \rightarrow \infty
\end{equation}
almost surely and in $L^2(P)$. We set $\|\cdot\|$ for the Euclidean norm on $\R^d$. The limit is described by $\mu=(\mu_1,\ldots, \mu_d)$ identified with a column vector and the $d\times d$ matrix ${\bf K}=\int_0^\infty \PHI(t)\,dt$, where $\PHI=(\varphi_{ij})$ is a matrix of functions from $\R_+$ to $\R_+$. The convergence \eqref{pres LLN} holds under the assumption that the spectral radius of ${\bf K}$ is strictly less than $1$, which guarantees in particular all the desirable integrability properties for $N$. As for a central limit theorem, under the additional condition that $t^{1/2}\PHI(t)$ is integrable componentwise, a consequence of Theorem \ref{TCL} is the convergence of the processes
$$ \sqrt{T} \Bigpar{T^{-1}N_{Tv}-  v(\mathbf{Id-K})^{-1}\mu }, \quad v\in[0,1]$$
to
$$ (\mathbf{Id-K})^{-1} \mathbf{\Sigma}^{1/2} W_v , \quad v\in[0,1].$$
in law for the Skorokod topology. The limit is described by a standard $d$-dimensional Brownian motion $(W_v)_{v \in [0,1]}$ and the  $d\times d$ diagonal matrix $\mathbf{\Sigma}$ defined by $\mathbf{\Sigma}_{ii}= \big(\mathbf{(Id-K)^{-1}} \mu\big)_i$. The proof of Theorems \ref{LG} and \ref{TCL} relies on the intensity theory of point processes: using that $N_t-\int_0^t \lambda_s\, ds$ is a $d$-dimensional martingale, if we set $X_t=N_t-E(N_t)$ with $E(\cdot)$ denoting expectation, we then have the fundamental representation
$$X_t= M_t + \int_0^t \PHI(t-s) X_s \,ds$$
of $X$ as a sum of a martingale and a convolution product of $\PHI$ with $X$ itself. Separating the two components, we can then take advantage of the powerful theory of limit theorems of semimartingales, as exposed in the comprehensive book of Jacod and Shiryaev \cite{JJetAS}.\\
 
In Section \ref{signature plot}, we consider yet another angle of study that is useful for applications. Given two square integrable point processes $N$ and $N'$ with value in $\mathbb{N}^d$, setting $X_t = N_t - E(N_t)$ and $X'_t = N'_t-E(N'_t)$, one can define the empirical cross-correlation of $N$ and $N'$ at scale $\Delta$ over $[0,T]$ as
$$\mathbf{V}_{\De,T}(N,N') = \frac{1}{T} \sum_{i=1}^{\PE{T/\De}} \bigpar{X_{i\De}-X_{(i-1)\De}} \big(X'_{i\De}-X'_{(i-1)\De}\big)^\T,$$
where $X_t$ and $X'_t$ are identified as column vectors and $(\cdot)^\T$ denotes transposition in $\R^d$. In rigorous terms,  $\mathbf{V}_{\De,T}(N,N')$ shall rather be called the empirical correlation matrix of the increments of $X$ and $X'$ sampled at scale $\Delta$ over $[0,T]$. This object is of major importance in practice: viewed as a function of $\Delta$, it reveals the correlation structure across scales between $N$ and $N'$. It is crucial for the understanding of the transition of $(N_{Tv})_{v \in [0,1]}$ from a microscopic regime of a point process to the macroscopic behaviour of a Brownian diffusion, up to appropriate normalisation, as $T \rightarrow \infty$. More precisely, given a time shift $\tau = \tau_T\in \R$, we focus in the paper on the $\tau$-shifted empirical correlation matrix of $N$ at scale $\Delta$, namely\footnote{This is actually a more general object since we can formally recover $\mathbf{V}_{\De,T}(N,N')$ from $\mathbf{V}_{\De,T}(N'',N''_{\tau+\cdot})$, where $N''=(N,N')$ is a $2d$-dimensional point process by letting $\tau \rightarrow 0$ in $\mathbf{V}_{\De,T}(N^{''},N^{''}_{\tau+\cdot})$.} $\mathbf{V}_{\De,T}(N,N_{\tau+\cdot})$. We prove in Theorem \ref{SP} that $\mathbf{V}_{\De,T}(N,N_{\tau+\cdot})$ is close in $L^2(P)$ to a deterministic explicit counterpart 
$$ \mathbf{v}_{\De,\tau}=\int_{\mathbb{R}_+^2} \bigpar{1-\tfrac{|t-s-\tau|}{\De}}^+
\bigpar{\mathbf{Id}\,\de_0(ds) + \PSI(s)ds} \mathbf{\Sigma} \bigpar{\mathbf{Id}\,\de_0(dt) + \PSI(t)^\T dt}$$
under appropriate conditions on $\tau$ and $\Delta$ relative to $T$ as $T \rightarrow \infty$. The approximation $\mathbf{v}_{\De,\tau}$ is described by $\mathbf{\Sigma}$ that already appears in Theorems \ref{LG} and \ref{TCL} and $ \PSI = \sum_{n\ge 1} \PHI_n$, where $\PHI_n$ is the matrix of $n$-fold convolution product of $\PHI$. This apparently cumbersome formula paves the way to explicit computations that yield crucial information about the dependence structure of functionals of $N$, as later developed in the paper.\\

Section \ref{applications} is devoted to some applications of Theorems \ref{LG}, \ref{TCL} and \ref{SP} to the study of the multivariate price model $S$ described by \eqref{intensiteHawkes}-\eqref{rep prix} for financial data modelling. We elaborate on particular examples the insight given by $\mathbf{v}_{\De,\tau}$. Concerning macroscopic limits of a univariate price $S=N_1-N_2$ obtained for $d=2$, we derive in Proposition \ref{limitemacrodim1} the limiting variance of the rescaled process $T^{-1/2}(N_{1,Tv}-N_{2,Tv})$ in the special case where $\varphi_{1,2}=\varphi_{2,1}=\varphi$ and $\varphi_{1,1}=\varphi_{2,2}=0$. This oversimplification enables to focus on the phenomenon of microstructure noise and reveals the macroscopic trace of $\varphi$ in the variance of the limiting Brownian motion. We discuss the theoretical implications of the obtained formulas in statistical finance. 
In the same way, we explicit in Proposition \ref{ex4prop1} the macroscopic correlation structure obtained with a bivariate price process $S=(S_1,S_2)=(N_1-N_2,N_3-N_4)$ when only cross-excitations $\varphi_{1,3} = \varphi_{3,1}$ and $\varphi_{2,4} = \varphi_{2,4}$ are considered, the other components $\varphi_{ij}$ being set to $0$ for simplicity. Again, we obtain an illuminating formula that characterises the influence of the microscopic dynamics on the macroscopic correlation of the Brownian limit. This enables in particular in Proposition \ref{ex4prop2} to demonstrate how Hawkes process can account for the Epps effect \cite{Epps} and also the lead-lag effect between two financial assets that has been given some attention in the literature recently \cite{HRY, AbPo, AH}.\\

Section \ref{preparation} develops some preliminary tools for the proofs. Section \ref{proofLG}, \ref{proofTCL} and \ref{proofSP} are devoted to the proof of Theorems \ref{LG}, \ref{TCL} and \ref{SP} respectively. Some technical results and computations of the application Section \ref{applications} are delayed until an appendix.
\section{Multivariate Hawkes processes} \label{def Hawkes}
Consider a measurable space $(\Om,\F)$ on which is defined a non-decreasing sequence of random variables $(T_n)_{n\ge 1}$  taking their values in $(0,\infty]$, and such that $T_n < T_{n+1}$ on the event $\set{T_n<\infty}$, for all $n\ge 1$. Let $(Z_n)_{n\ge 1}$ be a sequence of discrete random variables taking their values in $\set{1,\dots,d}$ for some positive integer $d$.
Define, for $t\geq 0$
$$ N_{i,t}=\sum_{n\ge 1} \ind{\set{T_n\le t}\cap\set{Z_n=i}}.$$
Remark that $N_{i,0}=0$ by construction. We endow $\Om$ with the filtration $(\F_t)_{t\ge 0}$ where
$\F_t$ is the $\si$-algebra generated by the random variables $N_{i,s}$, $s\le t$, $1\le i\le d$.
According to Jacod \cite{JJ75},  for any progressively measurable non-negative processes
$(\la_{1,t})_{t\ge 0},\dots,(\la_{d,t})_{t\ge 0}$ satisfying
$$ \int_0^{T_n} \la_{i,s}\,ds<\infty\quad \text{almost-surely,}$$
there exists at most one probability measure $P$ on $(\Om, \F_\infty)$ such that the compensator (or predictable projection) of
the integer-valued random measure 
$$ N(dt,dx)=\sum_{n\ge 1} {\bf 1}_{\set{T_n<\infty}}\de_{(T_n,Z_n)}(dt,dx)$$
on $(0,\infty)\times\set{1,\dots,d}$ is
%\marginpar{\tiny Supprimer cette allusion \`a la mesure al\'eatoire?}
$$\nu(dt,dx)=\sum_{i=1}^d \la_{i,t}\,dt \otimes \de_{i}(dx),$$
where $\delta$ is the Dirac mass. In other words,
for all $n\ge 1$, for all $i\in \set{1,\dots,d}$, the process
$$ N_{i,t\land T_n} - \int_0^{t\land T_n} \la_{i,s}\,ds  $$
is a $(\F_t)$-martingale. This implies that the law of the $d$-dimensional process $(N_1,\dots,N_d)$ is characterised
by $(\la_1,\dots,\la_d)$. Moreover if $\Om$ is rich enough we have the existence of such a probability measure $P$.
%Consider  $N_i=(N_{i,t})_{t\ge 0}$, $i=1,\dots,d$, $d$ $(\F_t)$-adapted simple point processes without common points, of respective $(\F_t)$-intensities
\begin{definition} \label{defrigHawkes}
We say that $N=(N_1,\dots,N_d)$ is a multivariate Hawkes process when
\begin{equation} \label{Hawkes}
 \la_{i,t} = \mu_{i} + \int_{(0,t)} \sum_{j=1}^d \phi_{ij}(t-s) dN_{j,s}
 \end{equation}
where $\mu_i\in\R_+$ and $\phi_{i,j}$ is a function from $\R_+$ to $\R_+$.
\end{definition}
%By construction, $N_0=0$.\comm{Peut-\^etre le dire ailleurs que $N_0=0$?}
We have a non-explosion criterion, as a consequence for instance of equality \eqref{compensateurN} in Lemma \ref{maj}, Section \ref{preparation} below.
\begin{lemma}[Non-explosion criterion]
Set $T_\infty=\lim_n T_n$. Assume that the following holds:
\begin{equation}\label{nonexplosion}
\int_0^t \phi_{ij}(s) ds <\infty
\quad \text{for all $i,j,t$.}
\end{equation}
Then $T_\infty=\infty$ almost surely.
\end{lemma}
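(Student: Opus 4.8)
The overall plan is to bound $\E[N_{i,t\wedge T_n}]$ uniformly in $n$ for every fixed $t$, and then to observe that on the explosion event $\{T_\infty\le t\}$ the process stopped at $T_n$ carries \emph{exactly} $n$ points, so that such a uniform bound forces $\P(T_\infty\le t)=0$. Fix $i\in\{1,\dots,d\}$, $t>0$ and $n\ge1$. The process stopped at $T_n$ has at most $n$ points, so all the expectations below are finite (bounded by $n$). By the martingale property recalled above --- this is exactly what equation \eqref{compensateurN} of Lemma \ref{maj} packages, namely that $\E[N_{i,t\wedge T_n}]$ equals the expectation of its compensator --- together with \eqref{Hawkes} and Tonelli's theorem (all integrands are non-negative), one gets, writing $\mathbf K_t:=\int_0^t\PHI(s)\,ds$ (a $d\times d$ matrix with non-negative and, by \eqref{nonexplosion}, finite entries),
\begin{equation}\label{plan-step1}
\E[N_{i,t\wedge T_n}]\;=\;\mu_i\,\E[t\wedge T_n]\;+\;\sum_{j=1}^d\E\Bigl[\int_{(0,t\wedge T_n)}\Bigl(\int_0^{\,t\wedge T_n-u}\phi_{ij}(r)\,dr\Bigr)\,dN_{j,u}\Bigr].
\end{equation}

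The decisive point is that \emph{no} sub-criticality is assumed on the full kernel matrix $\mathbf K_\infty$, so $\mathbf{Id}-\mathbf K_t$ need not be invertible; one has to localise. By \eqref{nonexplosion} each entry $\int_0^{t^*}\phi_{ij}$ tends to $0$ as $t^*\downarrow0$, so one may fix once and for all some $t^*>0$ for which the non-negative matrix $\mathbf K_{t^*}$ has spectral radius strictly less than $1$; then $(\mathbf{Id}-\mathbf K_{t^*})^{-1}=\sum_{k\ge0}\mathbf K_{t^*}^{\,k}$ converges and has non-negative entries. In \eqref{plan-step1} I would bound $\int_0^{t\wedge T_n-u}\phi_{ij}(r)\,dr\le\int_0^{t-u}\phi_{ij}(r)\,dr$ and split the $u$-integral at $u=(t-t^*)^+$: for $u\le(t-t^*)^+$ bound $\int_0^{t-u}\phi_{ij}\le(\mathbf K_t)_{ij}$, whereas for $u>(t-t^*)^+$ one has $t-u<t^*$ and hence $\int_0^{t-u}\phi_{ij}\le(\mathbf K_{t^*})_{ij}$. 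Bounding the two remaining counting integrals by $N_{j,(t-t^*)^+\wedge T_n}$ and $N_{j,t\wedge T_n}$ respectively, taking expectations, summing over $j$ and using $\E[t\wedge T_n]\le t$, this gives, in column-vector notation $\mathbf u^{(n)}_t:=\bigl(\E[N_{i,t\wedge T_n}]\bigr)_{1\le i\le d}$ (with the convention $\mathbf u^{(n)}_s:=0$ for $s\le0$),
\begin{equation}\label{plan-step2}
\mathbf u^{(n)}_t\;\le\;t\,\mu\;+\;\mathbf K_t\,\mathbf u^{(n)}_{(t-t^*)^+}\;+\;\mathbf K_{t^*}\,\mathbf u^{(n)}_t\qquad\text{(entrywise).}
\end{equation}

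Since every entry of $\mathbf u^{(n)}_t$ is finite, \eqref{plan-step2} rearranges to $\mathbf u^{(n)}_t\le(\mathbf{Id}-\mathbf K_{t^*})^{-1}\bigl(t\,\mu+\mathbf K_t\,\mathbf u^{(n)}_{(t-t^*)^+}\bigr)$, so that any bound on $\mathbf u^{(n)}_{(t-t^*)^+}$ uniform in $n$ propagates to one on $\mathbf u^{(n)}_t$. Since $\mathbf u^{(n)}_0=0$ and $t\mapsto\mathbf u^{(n)}_t$ is non-decreasing, induction along the grid $\{k\,t^*:k\ge0\}$ (finitely many steps suffice for each fixed $t$) yields $m_i(t):=\sup_{n\ge1}\E[N_{i,t\wedge T_n}]<\infty$ for every $t>0$ and every $i$. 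Finally, fix $t>0$: on $\{T_\infty\le t\}$ all $T_n$ are finite and strictly increasing with $T_n<T_\infty\le t$, hence $t\wedge T_n=T_n$ and $\sum_{i=1}^d N_{i,T_n}=n$ (the first $n$ points of the sequence all lie in $[0,T_n]$); therefore $n\,\P(T_\infty\le t)\le\sum_{i=1}^d\E[N_{i,t\wedge T_n}]\le\sum_{i=1}^d m_i(t)$ for every $n$, which forces $\P(T_\infty\le t)=0$, and letting $t\to\infty$ gives $T_\infty=\infty$ almost surely.

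I expect the localisation/propagation step to be the only real obstacle: one must resist inverting $\mathbf{Id}-\mathbf K_t$ globally and instead carry out the bookkeeping in \eqref{plan-step2}, separating the ``old'' excitation (coming from points before $(t-t^*)^+$, which feeds back the already-controlled $\mathbf u^{(n)}_{(t-t^*)^+}$ with coefficient $\mathbf K_t$) from the ``fresh'' excitation (coming from the last window of length $t^*$, which feeds back $\mathbf u^{(n)}_t$ but only with the sub-critical coefficient $\mathbf K_{t^*}$). The remaining ingredients --- Tonelli's theorem, monotonicity in $t$, and the ``exactly $n$ points'' observation on $\{T_\infty\le t\}$ --- are routine.
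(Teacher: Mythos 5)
Your proof is correct. For comparison: the paper offers no written proof of this lemma beyond the remark that it follows from equality \eqref{compensateurN} of Lemma \ref{maj}, and your starting identity is precisely the intermediate step (after Fubini, before the integration by parts) in the proof of that lemma, so the point of departure is the same; the genuine added value of your argument is everything after that. Note that Section \ref{preparation} is written under \eqref{sta}, i.e. $\rho(\mathbf{K})<1$, an assumption the present lemma does not make, and the paper never spells out how to control $E(N_{i,t\wedge T_n})$ uniformly in $n$ without it (the bound \eqref{majEN}, for instance, uses subcriticality). Your localisation supplies exactly this missing piece: since \eqref{nonexplosion} forces $\int_0^{t^*}\phi_{ij}\to 0$ as $t^*\downarrow 0$, you may fix $t^*$ with $\rho(\mathbf{K}_{t^*})<1$, split the excitation at $(t-t^*)^+$ so that the self-referential term enters only through the subcritical matrix $\mathbf{K}_{t^*}$, and then invert $\mathbf{Id}-\mathbf{K}_{t^*}$ --- legitimately, because $\mathbf{u}^{(n)}_t\le n$ componentwise and $(\mathbf{Id}-\mathbf{K}_{t^*})^{-1}=\sum_{k\ge 0}\mathbf{K}_{t^*}^{k}$ has non-negative entries --- before propagating along the grid $\{k t^*\}$. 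This plays, on a small time window, the role that \eqref{sta} plays globally in the paper, while using only the local integrability hypothesis of the statement; a classical alternative achieving the same effect is an exponential weight $e^{-\lambda s}$ with $\lambda$ large (making the weighted kernel subcritical), but your time-localisation is equally effective and entirely elementary. The concluding step --- on $\{T_\infty\le t\}$ one has $t\wedge T_n=T_n$ and $\sum_i N_{i,T_n}=n$, so the uniform moment bound forces $P(T_\infty\le t)=0$ for every $t$ --- is sound, and I see no gap.
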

%and the $d$-dimensional processes $$ \la=(\la_1,\dots,\la_d),\quad \ell=(\ell_1,\dots,\ell_d).$$

\section{Law of large numbers and functional central limit theorem} \label{Brownian limits}
On a rich enough probability space $(\Omega, \F, P)$, we consider a Hawkes process $N=(N_t)_{t \geq 0}$ according to Definition \ref{defrigHawkes}, satisfying \eqref{nonexplosion} and specified by the vector
$$ \mu=(\mu_1,\dots,\mu_d)$$
and the $d\times d$-matrix valued function
$$\PHI=(\phi_{i,j})_{1\le i,j\le d}.$$
%\comm{On n'utilisera ni $\F$ ni $\F_\infty$ il me semble}
%\comm{Supprimer $N_0=0$ ici? Vu la d\'efinition donn\'ee c'est toujours v\'erifi\'e}
%With no loss of generality, we take $N_0=0$ and we suppose that $\F=\F_\infty$, where $\big(\F_t\big)_{t \geq 0}$ is the filtration generated by $N$. 
Note that in this setting, we do not assume a stationary regime for $N$.
%, that can be started with an arbitrary\footnote{provided that the probability space is rich enough to accomodate further a random varriable $\xi$ such that $N_0=\xi$.} initial condition $N_0$. 
Consider the assumption
\begin{equation*} \label{sta}\tag{\mbox{\bf A1}}
\begin{minipage}{10cm} \em
For all $i,j$ we have $\int_0^\infty \phi_{ij}(t) dt < \infty$ and the spectral radius $\rho({\bf K})$ of the matrix ${\bf K}=\int_0^\infty \PHI(t)\,dt$ satisfies $\rho({\bf K}) <1.$
\end{minipage}
\end{equation*}
%Under Assumption \ref{sta}, we have that $N_t$ is integrable, see for instance \eqref{majEN} in Lemma \ref{maj}, Section \ref{preparation} below.
%\comm{Supprimer l'assertion sur l'int\'egrabilit\'e et la rajouter dans Th 1?}
First we have a law of large numbers in the following sense:
\begin{theorem} \label{LG}
Assume that \eqref{sta} holds. %\begin{enumerate}
%\item  The vectors $\mu^{T}$ converge to a limit $\mu\in\R_+^d$ as $T\to\infty$,
%\item The $d\times d$-matrices $K^{T}=\int_0^\infty \phi^{T}(t)\,dt$ converge to a matrix $K$ as $T\to\infty$ and the spectral radius of $K$ satisfies $\rho(K)<1$,
%\item The family $(\phi^{T})_{T>0}$ satisfies
%$$ \lim_{t_0\to\infty} \limsup_{T\to\infty} \int_{t_0}^\infty \phi^{T}(t) \,dt=0.$$
%$$ \limsup_{T\to\infty} \rho(K_T) < 1$$
%\item $ T^{-1} \int_0^T \ell^{T}_t dt \xrightarrow{P} 0$ as $T\to\infty$.
%\end{enumerate}
Then $N_t\in L^2(P)$ for all $t\ge 0$ and we have
$$ \sup_{v\in[0,1]}\bignorm{T^{-1}N_{Tv} - v\,({\bf Id-K})^{-1} \mu} \rightarrow 0\;\; \text{as}\;\;T \rightarrow \infty$$
almost-surely and in $L^2(P)$. 
%the processes
%$$ \frac{1}{T} N_{Tv}, \quad v\in[0,1]$$
%converge, for the sup-norm, in probability as $T\to\infty$ to
%$$v\,({\bf Id-K})^{-1} \mu , \quad v\in[0,1]$$
%
%$$ \sup_{v\in[0,1]}\Bignorm{ \frac{1}{T} N_{Tv} - v\,({\bf Id-K})^{-1} \mu } \xrightarrow[T\to\infty]{} 0 \quad\text{a.s. and in $L^2$.}$$ 
\end{theorem}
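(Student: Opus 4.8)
The plan is to write $T^{-1}N_{Tv}=T^{-1}\E(N_{Tv})+T^{-1}X_{Tv}$ with $X_t=N_t-\E(N_t)$ and to treat the deterministic mean and the centred fluctuation separately: the first by a renewal computation, the second by a martingale maximal inequality. For the mean, taking expectations in \eqref{Hawkes} shows that $m(t)=\E(N_t)$ has density $m'(t)=\E(\lambda_t)$ solving the renewal equation $m'=\mu+\PHI*m'$. Under \eqref{sta} the resolvent kernel $\PSI=\sum_{n\ge1}\PHI_n$ is non-negative and lies in $L^1(\R_+)$, since $\int_0^\infty\PHI_n=\mathbf{K}^n$ entrywise and $\sum_n\mathbf{K}^n=(\mathbf{Id}-\mathbf{K})^{-1}-\mathbf{Id}$ converges when $\rho(\mathbf{K})<1$; this also gives $\mathbf{Id}+\int_0^\infty\PSI=(\mathbf{Id}-\mathbf{K})^{-1}$. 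Solving the renewal equation yields $m'(t)=\mu+\bigpar{\int_0^t\PSI(u)\,du}\mu$ (finiteness of $\E(N_t)$, indeed $\E(N_t)=O(t)$, is part of this; cf. Lemma \ref{maj}), so $T^{-1}m(Tv)=v\mu+T^{-1}\int_0^{Tv}\bigpar{\int_0^s\PSI(u)\,du}\mu\,ds$, which differs from $v(\mathbf{Id}-\mathbf{K})^{-1}\mu$ by at most $T^{-1}\int_0^T h(s)\,ds$ in norm, uniformly in $v\in[0,1]$, where $h(s)=\bignorm{\bigpar{\int_s^\infty\PSI(u)\,du}\mu}$. Since $\PSI\in L^1$, $h$ is bounded with $h(s)\to0$, so its Cesàro average vanishes, disposing of the deterministic term.

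For the centred part, set $M_t=N_t-\int_0^t\lambda_s\,ds$, so that the fundamental identity $X_t=M_t+\int_0^t\PHI(t-s)X_s\,ds$ is solved via the same resolvent as $X_t=M_t+\int_0^t\PSI(t-s)M_s\,ds$. Here $M$ is a $d$-dimensional $L^2$ martingale with orthogonal coordinates and $\E(M_{i,t}^2)=\E\langle M_i\rangle_t=\E(N_{i,t})=O(t)$, so Doob's $L^2$ maximal inequality gives $\E\bigpar{\sup_{t\le T}M_{i,t}^2}\le 4\,\E(N_{i,T})=O(T)$. For the convolution term, pathwise $\sup_{t\le T}\bigabs{\int_0^t\psi_{ij}(t-s)M_{j,s}\,ds}\le\norm{\psi_{ij}}_{L^1}\sup_{s\le T}\abs{M_{j,s}}$, so combining the two bounds gives $\E\bigpar{\sup_{t\le T}\norm{X_t}^2}=O(T)$. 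In particular $X_t\in L^2(P)$, hence $N_t\in L^2(P)$, and
$$\E\Bigpar{\sup_{v\in[0,1]}\norm{T^{-1}X_{Tv}}^2}=T^{-2}\,\E\Bigpar{\sup_{t\le T}\norm{X_t}^2}=O(T^{-1})\to0,$$
which together with the deterministic estimate proves the $L^2(P)$ convergence.

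For the almost sure statement, the bound $T^{-2}\E(\sup_{t\le T}\norm{X_t}^2)=O(T^{-1})$ is not summable along integers, so I pass to the subsequence $T_k=k^2$: then $\sum_k T_k^{-2}\E(\sup_{t\le T_k}\norm{X_t}^2)=\sum_k O(k^{-2})<\infty$, whence $T_k^{-1}\sup_{t\le T_k}\norm{X_t}\to0$ almost surely. For general $T$, picking $k$ with $T_k\le T<T_{k+1}$ and using monotonicity of $t\mapsto\sup_{s\le t}\norm{X_s}$ together with $T_{k+1}/T_k\to1$ gives $T^{-1}\sup_{t\le T}\norm{X_t}\to0$ a.s.; combined with the mean estimate this yields the claim.

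The one delicate point is that \eqref{sta} controls only the spectral radius of $\mathbf{K}$, not an operator norm, so a naive Gronwall estimate on $\norm{X_t}$ through $\norm{\PHI}$ would require $\int_0^\infty\norm{\PHI}<1$ and fails in general; the argument must genuinely route through the matrix resolvent $\PSI=\sum_n\PHI_n$, exploiting that $\int_0^\infty\PHI_n=\mathbf{K}^n$ decays geometrically. Once this is in place, the renewal identities, the resolvent representation of $X$, and the maximal inequality for the convolution term are all routine.
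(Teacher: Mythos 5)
Your argument is correct, and for the mean and the $L^2(P)$ part it is essentially the paper's own proof: the a priori bound $E(N_t)\le t\,(\mathbf{Id-K})^{-1}\mu$ (the paper's Lemma \ref{maj}, proved by stopping at the successive jump times and summing the geometric series $\sum_n \mathbf{K}^n$ -- the one ingredient you invoke rather than establish, though you correctly identify that the argument must route through the resolvent $\PSI=\sum_n\PHI_n$ and not through an $L^1$ norm of $\PHI$), the representation $X=M+\PSI\star M$ of Lemma \ref{repre}, and Doob's inequality giving $E\bigpar{\sup_{t\le T}\norm{X_t}^2}=O(T)$, which is the paper's Lemma \ref{majM} combined with the bound $\sup_{v}\bignorm{N_{Tv}-E(N_{Tv})}\le C_\PHI\sup_{t\le T}\norm{M_t}$; your treatment of the mean through the density $E(\la_t)$ and a Ces\`aro argument is equivalent to the paper's Lemma \ref{EN} with $p=0$. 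The only genuine divergence is the almost-sure statement. The paper introduces $Z_t=\int_{(0,t]}(1+s)^{-1}dM_s$, checks that $E\bigpar{[Z_i,Z_i]_\infty}<\infty$ so that $Z_t$ converges a.s., and concludes $M_t/t\to 0$ by a Kronecker-type integration by parts, whence $T^{-1}\sup_{t\le T}\norm{M_t}\to 0$ directly. You instead apply Chebyshev and Borel--Cantelli along $T_k=k^2$, where the $O(T^{-1})$ second-moment bound becomes summable, and interpolate between the $T_k$ using the monotonicity of $t\mapsto\sup_{s\le t}\norm{X_s}$ and $T_{k+1}/T_k\to 1$; this is valid precisely because you take the running supremum (and the deterministic mean term is handled uniformly in $v$ separately). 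Your route is more elementary, needing only the moment bound you already have; the paper's route avoids the subsequence-plus-interpolation step and gives the a.s. convergence of $M_t/t$ in one stroke. Either way the conclusion, including $N_t\in L^2(P)$, follows as you state.
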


Next we have an associated functional central-limit theorem. Introduce the functions $\PHI_n$ defined on $\R_+$ and with values in the set of $d\times d$-matrices with entries in $[0,\infty]$  by
\begin{equation} \label{defPHI}
 \PHI_1=\PHI, \quad \PHI_{n+1}(t)=\int_0^t \PHI(t-s) \PHI_n(s)\,ds, \quad n\ge 1.
\end{equation}
Under \eqref{sta} we have
$ \int_0^\infty \PHI_n(t)\,dt = {\mathbf K}^n$ hence 
the series $\sum_{n\ge 1} \PHI_n$ converges in $L^1(dt)$. We set
\begin{equation}\label{defPSI}
\PSI = \sum_{n\ge 1} \PHI_n 
\end{equation}

\begin{theorem} \label{TCL} Assume that \eqref{sta} holds.
%We do the same assumptions and use the same notation as in Theorem \ref{LG}.
%We assume moreover that
%$$ T^{-1/2} \int_0^T \ell^{T}_t dt \toP 0.$$
We have
$$ E(N_t)= t\mu + \Bigpar{\int_0^t \PSI(t-s) s \, ds}\mu$$
where $\PSI\in L^1(dt)$ is given by \eqref{defPSI}.
%Define the functions $f_{n}^{T}:[0,1]\to\R_+^d$, $n\in\N$, $T>0$, by:
%$$ f_{0}^{T}(v)=v\mu^{T}, \quad f_{n+1}^{T}(v)=\int_0^v T\phi^{T}(Tu) f_{n}^{T}(v-u) \, du$$
%Then for $T$ large enough one has 
%$ \sum_{n\ge 0} f_{n}^{T} \le (I-K^{T})^{-1} \mu^{T}$ componentwise
%$\sum_{n} \sup_{v\in[0,1]} \norm{f_{n,T}(v)}_1 <\infty$ for all $T>0$
%and the processes
%$$ \sqrt{T} \bigpar{ \frac{1}{T}N_{Tv}- v\si^2} + T^{-1/2} (I-K)^{-1} \int_0^\infty \phi(t) (t\land Tv) dt\, \si^2, \quad v\in[0,1] $$
%$$ \sqrt{T} \Bigpar{ \frac{1}{T}N_{Tv}- \bigpar{ \sum_{n\in\N}  \phi^{\star n}_T \star I_d Id_{\R_+}}\si^2}, \quad v\in[0,1] $$
Moreover, the processes
$$ \frac{1}{\sqrt{T}} \Bigpar{ N_{Tv}-  E(N_{Tv}) }, \quad v\in[0,1] $$
converge in law for the Skorokod topology to
$$ (\mathbf{Id-K})^{-1} \mathbf{\Sigma}^{1/2} W_v , \quad v\in[0,1]$$
%$$ (\mathbf{Id-K})^{-1} \bigpar{\si_1 W_{1,v}, \dots, \si_d W_{d,v}}, \quad v\in[0,1]$$
%where $\bigpar{ W_1, \dots, W_d}$ is a standard $d$-dimensional Brownian motion and
as $T \rightarrow \infty$, where $(W_v)_{v\in[0,1]}$ is a standard $d$-dimensional Brownian motion
%$$ (\si_1^2,\dots,\si_d^2)={(\mathbf{Id-K})^{-1} \mu}.$$
and $\mathbf{\Sigma}$ is the diagonal matrix such that $\mathbf{\Sigma}_{ii}= (\mathbf{(Id-K)}^{-1} \mu)_i$.
\end{theorem}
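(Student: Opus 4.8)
The plan is to prove the two assertions in turn: the formula for $E(N_t)$ by solving a deterministic renewal equation, and the functional convergence by feeding a martingale functional central limit theorem through the resolvent representation of $X_t:=N_t-E(N_t)$.

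\textbf{The mean.} Let $M_t=N_t-\int_0^t\la_s\,ds$ be the fundamental $d$-dimensional martingale attached to $N$; its components are genuine $L^2(P)$-martingales by Theorem~\ref{LG} and the compensator identity of Section~\ref{preparation}. Taking expectations in \eqref{Hawkes} --- legitimate since $\PHI\ge0$, so Tonelli applies to the integral against the random measure $dN$ --- the vector $\ell_t:=E(\la_t)$ satisfies the renewal equation $\ell_t=\mu+\int_0^t\PHI(t-s)\ell_s\,ds$. Iterating this identity and using that $\PSI=\sum_{n\ge1}\PHI_n$ converges in $L^1(dt)$ under \eqref{sta}, its unique locally bounded solution is $\ell_t=\mu+\int_0^t\PSI(t-s)\mu\,ds$. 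Since $E(N_t)=\int_0^t\ell_s\,ds$, one more integration together with Fubini gives $E(N_t)=t\mu+\bigpar{\int_0^t\PSI(t-s)\,s\,ds}\mu$, as announced.

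\textbf{Resolvent representation and rescaling.} Subtracting its mean from $N_t=M_t+t\mu+\int_0^t\bigl(\int_{(0,s)}\PHI(s-u)\,dN_u\bigr)ds$, a stochastic Fubini followed by an integration by parts (using $X_0=0$ and the absolute continuity of $x\mapsto\int_0^x\PHI$) yields the fundamental representation $X_t=M_t+\int_0^t\PHI(t-s)X_s\,ds$. Solving this linear Volterra equation by iteration as above gives $X_t=M_t+\int_0^t\PSI(t-s)M_s\,ds$. With $M^T_v:=T^{-1/2}M_{Tv}$ and the change of variable $r=T(v-w)$,
\[ \frac{1}{\sqrt T}\bigpar{N_{Tv}-E(N_{Tv})}=M^T_v+\int_0^{Tv}\PSI(r)\,M^T_{v-r/T}\,dr,\qquad v\in[0,1]. \]

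\textbf{Martingale CLT and the convolution term.} $M^T$ is a square-integrable martingale for $(\F_{Tv})_{v\in[0,1]}$; since distinct components of $N$ never jump simultaneously and $N$ has unit jumps, $[M^T_i,M^T_j]_v=\delta_{ij}T^{-1}N_{i,Tv}$, which converges almost surely, uniformly in $v$, to $\delta_{ij}\,v\,\mathbf{\Sigma}_{ii}$ by Theorem~\ref{LG} (the limit being continuous and monotone). The jumps of $M^T$ have size $T^{-1/2}\to0$, so the conditional Lindeberg condition holds, and the martingale functional CLT (see \cite{JJetAS}) gives $M^T\toL\mathbf{\Sigma}^{1/2}W$ for the Skorokhod topology, with $W$ a standard $d$-dimensional Brownian motion; in particular $(M^T)$ is $C$-tight. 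Setting $\mathbf K':=\int_0^\infty\PSI(r)\,dr=\sum_{n\ge1}\mathbf K^n=(\mathbf{Id}-\mathbf K)^{-1}-\mathbf{Id}$, decompose
\[ \int_0^{Tv}\PSI(r)M^T_{v-r/T}\,dr-\mathbf K'M^T_v=\int_0^{Tv}\PSI(r)\bigpar{M^T_{v-r/T}-M^T_v}\,dr-\Bigpar{\int_{Tv}^\infty\PSI(r)\,dr}M^T_v. \]
Cutting the first integral at a large $A$ bounds its $r\le A$ part by $\bigl(\int_0^\infty\norm{\PSI(r)}\,dr\bigr)\,w(M^T,A/T)$ with $w$ the modulus of continuity, and its $r>A$ part by $2\sup_v\norm{M^T_v}\int_A^\infty\norm{\PSI(r)}\,dr$; the last term is handled by separating $v\ge\epsilon$ (where $\int_{T\epsilon}^\infty\norm{\PSI}\to0$) from $v<\epsilon$ (where $\sup_{v<\epsilon}\norm{M^T_v}$ is small). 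The $C$-tightness of $(M^T)$ together with $\PSI\in L^1(dt)$ then make $\sup_{v\in[0,1]}$ of the left-hand side tend to $0$ in probability. Hence $T^{-1/2}(N_{Tv}-E(N_{Tv}))=(\mathbf{Id}-\mathbf K)^{-1}M^T_v+o_P(1)$ uniformly in $v$, and since the limit $(\mathbf{Id}-\mathbf K)^{-1}\mathbf{\Sigma}^{1/2}W$ is continuous, continuous mapping and Slutsky's theorem in the Skorokhod space deliver the claimed convergence. The main obstacle is precisely this last point: turning the approximate-identity convergence $\int_0^{Tv}\PSI(r)M^T_{v-r/T}\,dr\to\mathbf K'M^T_v$ into one that is uniform in $v$ (including near $v=0$) and compatible with convergence in law --- $C$-tightness of $M^T$ and $L^1$-integrability of $\PSI$ being the essential inputs.
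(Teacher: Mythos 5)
Your argument is correct and follows essentially the same route as the paper: the mean via the renewal equation and its resolvent $\PSI$, the representation $X_t=M_t+\int_0^t\PSI(t-s)M_s\,ds$, the martingale functional CLT for $M^{(T)}$ obtained from $[M^{(T)}_i,M^{(T)}_j]_v=\delta_{ij}T^{-1}N_{i,Tv}$ together with Theorem~\ref{LG} and the absence of common jumps, and finally the uniform replacement of the convolution term by $\bigpar{\int_0^\infty\PSI}M^{(T)}_v$ using $C$-tightness and $\PSI\in L^1(dt)$. The only cosmetic difference is that you derive the mean formula at the level of $E(\la_t)$ while the paper works with $E(N_t)$ through its Lemma~\ref{maj}, but the substance of the proof is the same.
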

Consider now the following restriction on $\PHI$:
\begin{equation*} \label{uibis} \tag{{\bf A2}}
\int_0^\infty \PHI(t) \, t^{1/2}\,dt < \infty \quad \text{componentwise.}
\end{equation*}
Using Theorem \ref{LG} and Assumption \eqref{uibis}, we may re\-place $T^{-1}E(N_{Tv})$ by its limit in Theorem \ref{TCL} and obtain the following corollary. 
\begin{corollary} \label{TCLbis} Assume that \eqref{sta} and \eqref{uibis} hold. Then
%We use the same notation as in Theorem \ref{TCL}.
%and that $\PHI$ satisfies
%With the the same assumptions and notation as in Theorem \ref{TCL},
%if moreover the function  $\PHI$ satisfies
%\begin{equation*} \label{uibis} \tag{{\bf A2}}
%\int_0^\infty \PHI(t) \, t^{1/2}\,dt < \infty \quad \text{componentwise.}
%\end{equation*}
the processes
$$ \sqrt{T} \Bigpar{ \frac{1}{T}N_{Tv}-  v(\mathbf{Id-K})^{-1}\mu }, \quad v\in[0,1] $$
converge in law for the Skorokod topology to
$$ (\mathbf{Id-K})^{-1} \mathbf{\Sigma}^{1/2} W_v , \quad v\in[0,1]$$
as $T \rightarrow \infty$.
%$$ (\mathbf{Id-K})^{-1} \bigpar{\si_1 W_1, \dots, \si_d W_d}.$$
\end{corollary}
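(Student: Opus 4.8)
The plan is to deduce the corollary from Theorem~\ref{TCL} by showing that the deterministic bias produced when one replaces $T^{-1}E(N_{Tv})$ by its limit is uniformly negligible. Write, for $v\in[0,1]$,
$$\sqrt T\Bigpar{\tfrac1T N_{Tv}-v(\mathbf{Id-K})^{-1}\mu}=\tfrac1{\sqrt T}\bigpar{N_{Tv}-E(N_{Tv})}+R_T(v),\qquad R_T(v):=\sqrt T\Bigpar{\tfrac1T E(N_{Tv})-v(\mathbf{Id-K})^{-1}\mu}.$$
By Theorem~\ref{TCL} the first term converges in law for the Skorokhod topology to $(\mathbf{Id-K})^{-1}\mathbf{\Sigma}^{1/2}W$, which has continuous paths; since $R_T(\cdot)$ is deterministic, it suffices to prove $\sup_{v\in[0,1]}\norm{R_T(v)}\to0$, after which a converging-together (Slutsky-type) argument in the Skorokhod space concludes, the deterministic uniform perturbation being absorbed because the limiting process is continuous.

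To compute the bias I would use the formula $E(N_t)=t\mu+\bigpar{\int_0^t\PSI(t-s)s\,ds}\mu$ from Theorem~\ref{TCL} together with the identity $(\mathbf{Id-K})^{-1}=\mathbf{Id}+\sum_{n\ge1}\mathbf{K}^n=\mathbf{Id}+\int_0^\infty\PSI(t)\,dt$, which follows from $\int_0^\infty\PHI_n(t)\,dt=\mathbf{K}^n$ and \eqref{defPSI}. After the substitution $u=Tv-s$ one has $T^{-1}\int_0^{Tv}\PSI(Tv-s)s\,ds=v\int_0^{Tv}\PSI(u)\,du-T^{-1}\int_0^{Tv}u\,\PSI(u)\,du$, and subtracting the limit gives
$$R_T(v)=-\Bigpar{\,v\sqrt T\int_{Tv}^\infty\PSI(u)\,du\;+\;\tfrac1{\sqrt T}\int_0^{Tv}u\,\PSI(u)\,du\,}\mu .$$

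The key analytic input is that under \eqref{sta} and \eqref{uibis} one has $\int_0^\infty t^{1/2}\PSI(t)\,dt<\infty$ componentwise. I would establish this by the convolution bookkeeping already used in Section~\ref{preparation}: from $\PHI_{n+1}(t)=\int_0^t\PHI(t-s)\PHI_n(s)\,ds$ and the elementary inequality $(r+s)^{1/2}\le r^{1/2}+s^{1/2}$, writing $a_n=\int_0^\infty t^{1/2}\PHI_n(t)\,dt$, one gets entrywise $a_{n+1}\le a_1\mathbf{K}^n+\mathbf{K}a_n$, hence by induction $a_n\le\sum_{k=0}^{n-1}\mathbf{K}^k a_1\mathbf{K}^{n-1-k}$, and summing, $\sum_{n\ge1}a_n\le(\mathbf{Id-K})^{-1}a_1(\mathbf{Id-K})^{-1}<\infty$ since $\rho(\mathbf{K})<1$ and $a_1<\infty$ by \eqref{uibis}; monotone convergence then yields $\int_0^\infty t^{1/2}\PSI(t)\,dt=\sum_n a_n$. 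Granting this, both terms of $R_T(v)$ vanish uniformly in $v$: for the first, $v\sqrt T\int_{Tv}^\infty\PSI(u)\,du\le v^{1/2}\int_{Tv}^\infty u^{1/2}\PSI(u)\,du$, which is $\le\delta^{1/2}\int_0^\infty u^{1/2}\PSI(u)\,du$ when $v\le\delta$ and $\le\int_{T\delta}^\infty u^{1/2}\PSI(u)\,du\to0$ when $v\ge\delta$; for the second, splitting $\int_0^{Tv}$ at a fixed level $A$, the low part is $\le T^{-1/2}A\int_0^\infty\PSI(u)\,du\to0$, and on the high part $u\le u^{1/2}(Tv)^{1/2}\le u^{1/2}T^{1/2}$ gives $\le\int_A^\infty u^{1/2}\PSI(u)\,du$, small for $A$ large. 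An $\epsilon$--$\delta$ (resp.\ $\epsilon$--$A$) argument turns these into uniform bounds, whence $\sup_{v\in[0,1]}\norm{R_T(v)}\to0$.

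The only genuine work is the finiteness of $\int_0^\infty t^{1/2}\PSI(t)\,dt$, together with obtaining uniformity of $R_T$ down to $v=0$ --- and the latter is precisely what the gained factor $v^{1/2}$ above supplies. Everything else is the decomposition and a routine passage to the limit in the Skorokhod space, legitimate because the perturbation is deterministic and uniformly small and the limit process has continuous paths.
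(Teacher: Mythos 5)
Your proposal is correct and follows essentially the same route as the paper: the paper's proof combines Theorem~\ref{TCL} with Lemma~\ref{EN} applied with $p=1/2$, and your argument simply reproves that lemma in this special case — the same decomposition into the centred part plus the deterministic bias, the same recursion $\mathbf{A}_{n+1}\le \mathbf{A}_1\mathbf{K}^n+\mathbf{K}\mathbf{A}_n$ giving $\int_0^\infty t^{1/2}\PSI(t)\,dt<\infty$, and the same tail bound $v\sqrt{T}\int_{Tv}^\infty\PSI\le v^{1/2}\int_{Tv}^\infty u^{1/2}\PSI(u)\,du$. The only (immaterial) difference is that you handle $T^{-1/2}\int_0^{Tv}u\,\PSI(u)\,du$ by splitting at a fixed level $A$ rather than by the paper's integration by parts.
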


%\section{Stochastic volatility limits}
%As before we consider a stochastic intensity of the form
%$$ \la^T_t = \mu^{T} + \int_{(0,t)} \phi^{T}(t-s) dN_{s} + \ell^T_{t}.$$
%Here we take
%$$ \phi^T(s)=\exp(-sB_T)A_T + \psi^T(s)$$
%where
%\begin{itemize}
%\item $A_T$ and $B_T$ are $d\times d$-matrices such that $\exp(-sB_T)A_T\ge 0$ for all $t\ge0$,
%\item $\psi^T$ is a function taking its values in the set of $d\times d$ non-negative matrices
%\end{itemize} 
%``long range term + short rang term"
%
%\begin{theorem}\label{TCLstochvol}
%Assume that
%\begin{enumerate}
%\item The vectors $T^{1/2}\mu^T$ converge to a vector $\mu\in\R_+^d$,
%\item The matrices $T^{1/2} A_T$ converge to a matrix $A$
%\item $$B_T=A_T + T^{-1}R_T$$ where $R_T\to R$
%\item $T^{1/2}\int_0^\infty \psi^T(s)\,ds \to K$
%\item The family $(\psi^T)_{T>0}$ satisfies
%$$\lim_{t_0\to\infty} \limsup_{T\to\infty} T^{1/2} \int_{t_0}^\infty \psi^T(t) \,dt=0.$$
%\item $$ T^{-1/2} \int_0^T \ell^T_{t} dt \toP 0.$$
%\end{enumerate}
%
%Set
%$$ \xi^T_{t}=\mu^{T} + \int_{(0,t]} \exp\bigpar{-(t-s)B_T}A_T \, dN_{s}$$
%The processes
%$$ \Bigpar{ \bigpar{\xi^T_{Tu}}_{u\in[0,1]}, T^{1/2}\bigpar{T^{-1}N_{Tu} -\int_0^u \xi^T_{Tv}\,dv}_{u\in[0,1]} }$$
%converge in law for the Skorokod topology to
%$$ \Bigpar{ (\xi_u)_{u\in[0,1]}, \bigpar{M_u + K\int_0^u \xi_v dv}_{u\in[0,1]}}$$
%where
%$$ d\xi_v = \bigpar{-(R-AK)\xi_v + A\mu}dv + A \,dM_v, \quad \xi_0=0$$
%$$ M_{i,v}=\int_0^v \xi_{i,u}^{1/2} dW_{i,u}\quad 1\le i \le d$$
%and $(W_1,\dots,W_d)$ is a standard Brownian motion.
%\end{theorem}

\section{Empirical covariation across time scales} \label{signature plot}
For two square integrable counting processes $N$ and $N'$ with values in $\mathbb{N}^d$, set
$$X_t= N_t- E(N_t),\;\;X'_t = N'_t-E(N'_t).$$
The empirical covariation across time scales of $N$ and $N'$ is the process $\mathbf{V}_{\De,T}(N,N')$, $T>0$, $\De>0$, taking values in the set of $d\times d$ matrices and defined as
$$ \mathbf{V}_{\De,T}(N,N') = \frac{1}{T} \sum_{i=1}^{\PE{T/\De}} \bigpar{X_{i\De}-X_{(i-1)\De}} \bigpar{X'_{i\De}-X'_{(i-1)\De}}^\T$$
where $\bigpar{X_{i\De}-X_{(i-1)\De}}$ is identified as a column vector and
$\bigpar{X_{i\De}-X_{(i-1)\De}}^\T$ denotes its transpose, and we set $X_t=0$ for $t\le 0$. More precisely, given a time shift $\tau \in \R$, we are interested in the behaviour of $\mathbf{V}_{\De,T}(N,N_{\tau+\cdot})$. In essence, $\mathbf{V}_{\De,T}(N,N_{\tau+\cdot})$ can be viewed as a multivariate cross-correlogram across scales $\Delta$ of $N$: it can be consistently measured from empirical data and its limiting behaviour as $T \rightarrow \infty$ plays a key tool in understanding the second-order structure of linear functions of $N$ across scales $\Delta$. 
\begin{theorem} \label{SP}
In the same setting as in Theorem \ref{TCL},
let $(\De_T)_{T>0}$ and $(\tau_T)_{T>0}$ be two families of real numbers such that $\De_T>0$.
If $\De_T/T\to 0$ and $\tau_T/T\to 0$ as $T\to\infty$, we have
$$ \mathbf{V}_{\De_T,T}(X,X_{\tau_T+\cdot}) - \mathbf{v}_{\De_T,\tau_T} \to 0\;\;\text{as}\;\;T \rightarrow \infty\;\;\text{in}\;\;L^2(P)$$
where
\begin{multline} \label{signath}
\mathbf{v}_{\De,\tau}=
\bigpar{1-\tfrac{\abs{\tau}}{\De}}^+\, \mathbf{\Sigma} + \int_{\R_+^2} \!\! ds\, dt\,\bigpar{1-\tfrac{\abs{t-s-\tau}}{\De}}^+ \PSI(s) \mathbf{\Sigma}\, \PSI(t)^\T+\\
+ \int_0^\infty ds\,(1-\tfrac{\abs{s+\tau}}{\De})^+ \PSI(s) \mathbf{\Sigma}
+ \int_0^\infty ds\,(1-\tfrac{\abs{s-\tau}}{\De})^+ \mathbf{\Sigma}\,\PSI(s)^\T,
\end{multline}
or equivalently,
$$ \mathbf{v}_{\De,\tau}=\int_{\R_+^2} \bigpar{1-\tfrac{\abs{t-s-\tau}}{\De}}^+
\bigpar{\mathbf{Id}\,\de_0(ds) + \PSI(s)ds} \mathbf{\Sigma} \bigpar{\mathbf{Id}\,\de_0(dt) + \PSI(t)^\T dt},$$
where $\mathbf{\Sigma}$ is the diagonal matrix such that $\mathbf{\Sigma}_{ii}= \big(\mathbf{(Id-K)^{-1}} \mu\big)_i$ and the function $\PSI$ is given by \eqref{defPSI}.
%$\PSI=\sum_{n \geq 1}\PHI_n$, where $\PHI_n$ is defined in \eqref{defPHI}.
\end{theorem}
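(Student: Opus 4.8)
The plan is to work directly with the martingale representation of $X_t = N_t - E(N_t)$. Recall from the discussion preceding Theorem~\ref{LG} that $X_t = M_t + \int_0^t \PHI(t-s) X_s\, ds$, where $M$ is the $d$-dimensional martingale $N_t - \int_0^t \lambda_s\, ds$. Solving this renewal-type equation yields $X_t = M_t + \int_0^t \PSI(t-s) M_s\, ds$, i.e. $X_t = \int_{[0,t]} \bigpar{\mathbf{Id}\,\de_0(du) + \PSI(t-u)du}\, dM_u$ in the sense of stochastic integration against $M$. Since the increments $X_{i\De} - X_{(i-1)\De}$ are then linear functionals of $M$, the bracket $\langle M^{(k)}, M^{(\ell)}\rangle_t = \de_{k\ell}\int_0^t \lambda_{k,s}\, ds$ (the components of $M$ being driven by orthogonal jump measures, with predictable quadratic variation the compensator) lets me compute $E\bigpar{(X_{i\De}-X_{(i-1)\De})(X'_{j\De}-X'_{(j-1)\De})^\T}$ exactly. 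The key input is that $T^{-1}E(\lambda_{k,s}) \to \bigpar{(\mathbf{Id-K})^{-1}\mu}_k = \mathbf{\Sigma}_{kk}$, which follows from the formula for $E(N_t)$ in Theorem~\ref{TCL} (differentiating, $E(\lambda_{k,t}) = \mu_k + (\PSI * \mu)_k$ locally, and $\int_0^\infty\PSI = (\mathbf{Id-K})^{-1} - \mathbf{Id}$ so $E(\lambda_t) \to (\mathbf{Id-K})^{-1}\mu$ as $t\to\infty$).

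First I would fix $\tau = \tau_T$ and compute the expectation $\E\,\mathbf{V}_{\De_T,T}(X,X_{\tau_T+\cdot})$ in closed form using the above. Plugging the integral representation of the increments into the double sum, using orthogonality of the $M^{(k)}$ and the isometry, one gets a sum over $i$ of terms of the form $\int\int \bigpar{\text{kernel}_{i\De}(s)}\bigpar{\text{kernel}'_{i\De}(t)}\, d\langle M\rangle$; the difference-of-kernels structure produces the triangular weight $\bigpar{1 - \tfrac{|t-s-\tau|}{\De}}^+$ after one sums the $\lfloor T/\De\rfloor$ overlapping windows and divides by $T$ (this is the standard computation that the covariance of increments of a process with stationary-increment-like structure, averaged over a grid, yields a Bartlett/triangular kernel). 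Replacing $T^{-1}E(\lambda_s)$ by its limit $\mathbf{\Sigma}$ under the integral — justified by dominated convergence together with Assumption~\eqref{sta} giving the requisite integrability of $\PSI$, and here also using nothing beyond \eqref{sta} since no $t^{1/2}$ weight is needed for this first-order statement — produces exactly $\mathbf{v}_{\De_T,\tau_T}$ as written in \eqref{signath}. The $\de_0$ terms in the ``equivalently'' formula come from the $\mathbf{Id}$ part of the kernel $\mathbf{Id}\,\de_0 + \PSI$, i.e. from the raw martingale increment before convolution; expanding the product $(\mathbf{Id}\,\de_0 + \PSI)(\mathbf{Id}\,\de_0 + \PSI^\T)$ gives the four pieces of \eqref{signath}. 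The conditions $\De_T/T\to0$ and $\tau_T/T\to0$ are used precisely to ensure the number of full windows $\lfloor T/\De_T\rfloor$ is $\sim T/\De_T$ and the boundary windows affected by the shift $\tau_T$ are negligible in the $T^{-1}$ normalisation.

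Second, I would control the variance, i.e. show $\E\,\bignorm{\mathbf{V}_{\De_T,T}(X,X_{\tau_T+\cdot}) - \E\,\mathbf{V}_{\De_T,T}(X,X_{\tau_T+\cdot})}^2 \to 0$. Writing each entry of $\mathbf{V}$ as $\tfrac1T\sum_i \xi_i$ with $\xi_i$ a product of two increments, the variance is $\tfrac1{T^2}\sum_{i,j}\mathrm{Cov}(\xi_i,\xi_j)$. Each $\xi_i$ is a (double) stochastic integral against $M$, so the covariances reduce — via the isometry and fourth-moment / bracket estimates for the compensated jump measure — to integrals of products of the kernels against $E(\lambda)$ and $E(\lambda^2)$-type quantities; the latter are $O(1)$ locally in the relevant range and grow only linearly, so by \eqref{sta} the $L^1$-norm of $\PSI$ controls everything. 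The point is that $\mathrm{Cov}(\xi_i,\xi_j)$ decays in $|i-j|$ fast enough (essentially because $\PSI\in L^1$ makes the memory integrable) that $\tfrac1{T^2}\sum_{i,j}|\mathrm{Cov}(\xi_i,\xi_j)| = O\bigpar{\tfrac1{T^2}\cdot \tfrac{T}{\De_T}\cdot(\text{const} + \De_T)} \to 0$. This is the step I expect to be the main obstacle: getting a clean, uniform-in-$T$ bound on the fourth-order structure of $X$ (equivalently, on $E(\lambda_{k,t}^2)$ and on cross-brackets) and organising the double sum so that the $\tau_T$-shift, the mesh $\De_T$, and the endpoint effects all interact benignly. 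I would handle it by first proving, as a lemma in Section~\ref{preparation}, the bounds $\sup_{t\le T} t^{-1}E(\lambda_{k,t})<\infty$ and $\sup_{t\le T} t^{-1}E(\lambda_{k,t}^2)<\infty$ (or the analogous statements for $E(X_t^2)$), which reduce the variance computation to manipulations with the deterministic kernel $\mathbf{Id}\,\de_0 + \PSI$ and its $L^1$ norm. With the expectation computed and the variance shown to vanish, the $L^2(P)$ convergence in the statement follows.
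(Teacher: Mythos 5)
Your step 1 (computing $E\,\mathbf{V}_{\De_T,T}(X,X_{\tau_T+\cdot})$ through the representation $X=M+\PSI\star M$, the bracket $\langle M_k,M_\ell\rangle$, and $E(\la_{t})\to(\mathbf{Id-K})^{-1}\mu$) is sound in spirit and close to what the paper effectively establishes (the slip ``$T^{-1}E(\la_{k,s})\to\mathbf{\Sigma}_{kk}$'' should read $E(\la_{k,s})\to\mathbf{\Sigma}_{kk}$ as $s\to\infty$, as you correctly state afterwards). The genuine gap is in step 2, the variance control. Your final accounting $\tfrac1{T^2}\sum_{i,j}\abs{\mathrm{Cov}(\xi_i,\xi_j)}=O\bigpar{\tfrac1{T^2}\cdot\tfrac{T}{\De_T}\cdot(\mathrm{const}+\De_T)}=O\bigpar{\tfrac1{T\De_T}+\tfrac1T}$ does not tend to $0$ for all families admitted by the statement: the theorem only requires $\De_T/T\to0$, so $\De_T$ may shrink like $1/T$ or faster, and then $1/(T\De_T)$ does not vanish. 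This order is exactly what integrable memory ($\PSI\in L^1$) alone can give: each $\xi_i$ has variance of order $\De_T+\De_T^2$ and the number of non-negligibly correlated indices $j$ per fixed $i$ is of order $1/\De_T$, so covariance decay without cancellation leaves $O\bigpar{(1+\De_T)/(T\De_T)}$. The missing idea is the exact orthogonality furnished by the martingale structure: the paper writes $X=M+Y$ with $Y_t=\int_0^t\PSI(t-s)M_s\,ds$ (Lemma \ref{repre}), reduces everything to the shifted covariations $\mathbf{V}_{\De,T}(M_{\cdot-s},M_{\cdot-t+\tau})$, and there the cross terms over distinct windows involving disjoint increment intervals vanish \emph{exactly} (sums of martingale-difference products), so the second moment is a single sum of squares and one obtains bounds of order $(1+\De_T)/T$, uniformly in $s,t$ (Lemma \ref{VM}, via Lemma \ref{bdg}); only the overlapping diagonal term survives, equal to increments of $[M,M]=\mathbf{diag}(N)$, and is handled by the law of large numbers of Lemma \ref{VN}. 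Without exploiting this cancellation your argument does not cover the full range of $(\De_T)$ in the theorem.

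A second, related problem is the auxiliary lemma you propose, $\sup_{t\le T}t^{-1}E(\la_{k,t}^2)<\infty$: this is not a consequence of \eqref{sta}. Indeed $E(\la_{k,t}^2)$ contains the diagonal contribution $\sum_j\int_0^t\phi_{kj}(t-s)^2\,E(\la_{j,s})\,ds$, which is infinite whenever some $\phi_{kj}\in L^1\setminus L^2$, a situation allowed by \eqref{sta}; so any variance bound routed through second moments of the intensity is unavailable at this level of generality. What the variance step actually needs is fourth moments of increments, and the correct bootstrap (the paper's Lemma \ref{bdg}) avoids $E(\la^2)$ entirely: by Burkholder--Davis--Gundy, $E\bigpar{\sup_{t\le s\le t+h}\norm{M_s-M_t}^4}\le C\sum_i E\bigpar{(N_{i,t+h}-N_{i,t})^2}$, and the right-hand side is controlled using only first moments of $N$ together with $\PSI\in L^1$ (Lemmas \ref{majM} and \ref{repre}), giving $C_{\mu,\PHI}(h+h^2)$. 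Replacing your $E(\la^2)$-based preparatory lemma by this estimate, and the covariance-decay accounting by the martingale-orthogonality argument above, is what is needed to close the proof.
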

\begin{remark}
On can check that $\mathbf{v}_{\De,\tau}=E\bigpar{(N_{\De}-N_0)(N_{\De+\tau}-N_{\tau})^\T}$
where $N$ is a counting process of the (unique in law) stationary multivariate Hawkes process on $\R$ associated to $\mu$ and $\PHI$. Thus, another way to obtain $\mathbf{v}_{\De,\tau}$ is to compute $E\bigpar{(N_{\De}-N_0)(N_{\De+\tau}-N_{\tau})^\T}$ in the stationary regime, as in \cite{Hawkes71, Hawkes71bis, Hawkes72} by means of the Bartlett spectrum of $N$, see \cite{Bartlett}. However, the stationary restriction is superfluous and moreover, only very specific parametric form of $\varphi_{ij}$ like exponential functions enable to carry such computations.

%\comm{``the" car unicit\'e en loi}
\end{remark}
\begin{remark}
Obviously, we have $\mathbf{v}_{\De,\tau}= \mathbf{v}_{\De,-\tau}^\T$. 
\end{remark}
%\begin{remark}
%\end{remark}
%\comm{Je dirai d'abord:}
\begin{remark}
For fixed $\tau \in \R$, we retrieve the macroscopic variance of Theorem \ref{TCL} by letting $\Delta\rightarrow \infty$.
More precisely, we have
$$\mathbf{v}_{\De,\tau} \to \mathbf{\Ga\, \Sigma}\,\mathbf{ \Ga}^\T\;\;\text{as}\;\;\De\to\infty$$
where $\mathbf{\Ga=(Id-K)^{-1}}$, and the effect of $\tau$ vanishes as $\Delta \rightarrow \infty$.
For all $\tau\not=0$ we have 
$$\mathbf{v}_{\De,\tau} \to \mathbf{0}\;\;\text{as}\;\;\De\to 0.$$
This convergence simply expresses the fact that the two processes $N$ and $N_{\tau+\cdot}$ cannot jump at the same time, producing a flat autocorrelogram for sufficiently small sampling mesh $\Delta$.
%\item $\mathbf{v}_{\De,0} \to \mathbf{\Sigma}$ as $\De\to 0$.
\end{remark}
In the same way as Corollary \ref{TCLbis} is obtained from Theorem \ref{TCL}, we have the following refinement of Theorem \ref{SP}.
\begin{corollary} \label{SPbis}
Finally, in the same setting as in Theorem \ref{SP},
assume moreover that \eqref{uibis} holds. Define
$$ \tilde X_t= 
\left\{
\begin{array}{lll}
N_t- t (\mathbf{Id-K})^{-1}\mu &\text{if}& t\ge 0,\\
 0 &\text{if} & t< 0.
\end{array}
\right.$$
 We have
$$ \mathbf{V}_{\De_T,T}(\tilde X,\tilde X_{\tau_T+\cdot}) - \mathbf{v}_{\De_T,\tau_T} \to 0\;\;\text{as}\;\;T\rightarrow \infty\;\;\text{in}\;\;L^2(P),$$
where $\mathbf{v}_{\De,\tau}$ is given by \eqref{signath}.
\end{corollary}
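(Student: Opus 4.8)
The plan is to deduce Corollary \ref{SPbis} from Theorem \ref{SP} by controlling the discrepancy between the increments of $\tilde X$ and those of $X = N - E(N)$. Write $X_t = N_t - E(N_t)$ and recall from Theorem \ref{TCL} that $E(N_t) = t\mu + \bigl(\int_0^t \PSI(t-s)s\,ds\bigr)\mu$. Define the deterministic "bias" function $r_t = E(N_t) - t(\mathbf{Id-K})^{-1}\mu$ for $t \ge 0$ (and $r_t = 0$ for $t<0$), so that $\tilde X_t = X_t + r_t$ for $t \ge 0$. The key analytic input is that \eqref{uibis} forces $r_t$ to be bounded on $\R_+$: indeed $(\mathbf{Id-K})^{-1}\mu = \mu + \mathbf{K}(\mathbf{Id-K})^{-1}\mu$ and $\mathbf{K} = \int_0^\infty \PHI(s)\,ds$, so $r_t = -\bigl(\int_t^\infty \PSI(s)\,ds\bigr)\mu\cdot(\text{const})$-type tail terms after rearranging $\int_0^t \PSI(t-s)s\,ds$; under \eqref{uibis} one gets $\int_0^\infty \PSI(s)\sqrt{s}\,ds < \infty$ (since $\int \PHI_n(s)\sqrt s\,ds$ is summable, using $\PHI_{n+1} = \PHI * \PHI_n$ and $\sqrt{t-s+s}\le\sqrt{t-s}+\sqrt s$), which yields $\sup_{t\ge 0}\|r_t\| < \infty$ and in fact $\|r_t - r_\infty\| \to 0$. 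This is essentially the same computation that underlies Corollary \ref{TCLbis}, so I would state it as a short lemma (or cite the already-established bound from the proof of that corollary).

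Next, expand $\mathbf{V}_{\De_T,T}(\tilde X,\tilde X_{\tau_T+\cdot})$ bilinearly. Writing $\delta_i^{\De} Y = Y_{i\De} - Y_{(i-1)\De}$, each summand is
$$\bigl(\delta_i^{\De}X + \delta_i^{\De}r\bigr)\bigl(\delta_i^{\De}X'_{\tau+\cdot} + \delta_i^{\De}r_{\tau+\cdot}\bigr)^\T,$$
where $X' = X$, $r' = r$ shifted by $\tau$. This produces the main term $\mathbf{V}_{\De_T,T}(X,X_{\tau_T+\cdot})$, which by Theorem \ref{SP} is within $o_{L^2}(1)$ of $\mathbf{v}_{\De_T,\tau_T}$, plus three cross/remainder terms involving at least one factor $\delta_i^{\De}r$. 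For the purely deterministic term $\frac1T\sum_i \delta_i^{\De}r\,(\delta_i^{\De}r_{\tau+\cdot})^\T$, I bound it in operator norm by $\frac1T\sum_{i=1}^{\PE{T/\De}}\|\delta_i^{\De}r\|\,\|\delta_i^{\De}r_{\tau+\cdot}\|$ and use telescoping: $\sum_i \|\delta_i^{\De}r\| $ is not uniformly bounded, but $\sum_i \|\delta_i^{\De}r\|^2$ is controlled because $r$ has total variation on $[0,T]$ of order $o(\sqrt T)$ — more carefully, $\|\delta_i^\De r\| \le \int_{(i-1)\De}^{i\De}\|\PSI(u)\|\cdot(\text{bounded})\,du + (\text{small})$, so $\sum_i\|\delta_i^\De r\| \le C\bigl(1 + \int_0^\infty\|\PSI(u)\|\,du\bigr) = O(1)$, giving this term size $O(1/T) \to 0$. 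For the mixed terms $\frac1T\sum_i \delta_i^\De X\,(\delta_i^\De r_{\tau+\cdot})^\T$, I estimate the $L^2(P)$ norm via Cauchy–Schwarz: $E\|\frac1T\sum_i \delta_i^\De X\,(\delta_i^\De r_{\tau+\cdot})^\T\| \le \frac1T\bigl(\sum_i E\|\delta_i^\De X\|^2\bigr)^{1/2}\bigl(\sum_i \|\delta_i^\De r_{\tau+\cdot}\|^2\bigr)^{1/2}$; the first sum is $O(T)$ by the second-order bounds on $N$ from Lemma \ref{maj}/Section \ref{preparation} (the increments of $X$ over disjoint blocks have summable-in-$T$ variance of total order $T$), and the second sum is $\le \max_i\|\delta_i^\De r\|\cdot\sum_i\|\delta_i^\De r\| = o(1)\cdot O(1)$ since $\max_i\|\delta_i^\De r\| \to 0$ (as $r$ is uniformly continuous with vanishing modulus at scale $\De_T\to$ whatever, plus its tail flattens). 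Hence each mixed term is $O(T^{-1/2})\cdot o(1) \to 0$ in $L^2(P)$.

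Assembling: $\mathbf{V}_{\De_T,T}(\tilde X,\tilde X_{\tau_T+\cdot}) - \mathbf{v}_{\De_T,\tau_T} = \bigl(\mathbf{V}_{\De_T,T}(X,X_{\tau_T+\cdot}) - \mathbf{v}_{\De_T,\tau_T}\bigr) + (\text{three remainder terms})$, and all four pieces go to $0$ in $L^2(P)$. I would organize the write-up as: (i) the deterministic lemma on $\sup_t\|r_t\|<\infty$, $\sum_i\|\delta_i^\De r\| = O(1)$, $\max_i\|\delta_i^\De r\|\to 0$ under \eqref{uibis}; (ii) the bilinear expansion; (iii) the three bounds above; (iv) invoke Theorem \ref{SP}. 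The main obstacle — the only genuinely delicate point — is getting the right summability of the increments of the bias $r$ at the discretization scale, i.e. that $\sum_{i=1}^{\PE{T/\De_T}}\|\delta_i^{\De_T} r\|$ stays bounded uniformly in $T$ while the individual increments vanish; this is exactly where Assumption \eqref{uibis} enters (plain \eqref{sta} would only give $r$ bounded, not the needed control on its oscillation), and it is the quantitative refinement that upgrades Theorem \ref{SP} to Corollary \ref{SPbis} just as \eqref{uibis} upgrades Theorem \ref{TCL} to Corollary \ref{TCLbis}. Everything else is routine second-moment bookkeeping using the estimates already developed for the proof of Theorem \ref{SP}.
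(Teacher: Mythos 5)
Your overall strategy---expand $\mathbf{V}_{\De_T,T}(\tilde X,\tilde X_{\tau_T+\cdot})$ bilinearly around $X=N-E(N)$, keep the main term for Theorem \ref{SP}, and kill the three remainder terms by controlling the increments of the deterministic bias---is the same as the paper's. The problem is the quantitative lemma (i) you build it on. Writing $r_t=E(N_t)-t(\mathbf{Id-K})^{-1}\mu$, formula \eqref{difference} gives $r'(t)=-\bigpar{\int_t^\infty \PSI(s)\,ds}\mu$ for $t>0$, and under \eqref{uibis} this only yields $\norm{r'(t)}\le C\,t^{-1/2}$. Consequently $\norm{r_t}$ and the total variation of $r$ on $[0,T]$ are of order $\sqrt{T}$ (and can genuinely diverge, e.g.\ for a scalar kernel with tail $\phi(t)\sim t^{-5/2}$, for which $\int t^{1/2}\phi<\infty$ but $\int t\,\phi=\infty$). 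So your claims $\sup_t\norm{r_t}<\infty$ and $\sum_i\norm{\delta_i^{\De}r}=O(1)$ are false in general under \eqref{uibis}; the step ``$\norm{\delta_i^{\De}r}\le \int_{(i-1)\De}^{i\De}\norm{\PSI(u)}\,du\cdot(\text{bounded})$'' confuses the tail $\int_t^\infty\PSI(s)\,ds$ appearing in $r'$ with $\PSI$ itself. Moreover $\max_i\norm{\delta_i^{\De_T}r}\to 0$ also fails in general: only $\De_T/T\to 0$ is assumed, so $\De_T$ may tend to infinity, in which case an increment of $r$ over a single block need not be small. These are exactly the estimates your bounds on the pure-bias term and on the mixed terms rest on, so as written the remainder terms are not controlled.

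The correct quantitative input---and the paper's proof---is the normalized sum of \emph{squared} increments: since $r$ is absolutely continuous, Cauchy--Schwarz on each block gives $\sup_{\tau\in\R}\,\frac{1}{T}\sum_{k=1}^{\PE{T/\De_T}}\bignorm{r_{\tau+k\De_T}-r_{\tau+(k-1)\De_T}}^2\le \frac{\De_T}{T}\int_0^\infty\norm{r'(t)}^2\,dt$, and $\int_0^\infty\norm{r'(t)}^2\,dt<\infty$ because $\norm{r'(t)}\le t^{-1/2}\int_t^\infty s^{1/2}\norm{\PSI(s)}\,ds$ and, by Fubini, $\int_0^\infty t^{-1/2}\int_t^\infty\norm{\PSI(s)}\,ds\,dt=2\int_0^\infty s^{1/2}\norm{\PSI(s)}\,ds<\infty$ under \eqref{uibis}. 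This is where \eqref{uibis} enters, and the prefactor $\De_T/T\to 0$ (rather than any modulus-of-continuity argument) is what handles the regime $\De_T\to\infty$. With this bound replacing your item (i), your bilinear expansion plus Cauchy--Schwarz---using $\frac{1}{T}\sum_k E\bignorm{\delta_k^{\De_T}X}^2=O(1)$, which follows from Lemma \ref{majM} and the representation $X=M+\int_0^\cdot\PSI(\cdot-s)M_s\,ds$---does close the argument and gives the corollary.
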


%
%\begin{remark}
% \marginpar{dire que $\mathbf{v}_{\De,\de}$ correspond \`a la covariance dans le r\'egime stationnaire?}
%\begin{itemize}
%\item $\mathbf{v}_{\De,\de}= \mathbf{v}^*_{\De,-\de}$ of course
%\end{itemize}
%\end{remark}
%
\section{Application to financial statistics}  \label{applications}
\subsection{The macroscopic trace of microstructure noise} Following \cite{BDHM1}, we introduce a univariate price process $S = (S_t)_{t \geq 0}$ by setting
$$S=N_1-N_2,$$
where $(N_1,N_2)$ is a Hawkes process in the case $d=2$, with 
$$
\left\{
\begin{array}{lll}
\lambda_{1,t} & =\displaystyle & \nu+\int_{(0,t)} \varphi(t-s)dN_{2,s}, \\ \\
\lambda_{2,t} & =\displaystyle  & \nu+\int_{(0,t)} \varphi(t-s)dN_{1,s}
\end{array}
\right.
$$
for some $\nu \in \R_+$ and $\varphi:\R_+\rightarrow \R_+$. With our notation, this corresponds to having $\mu=(\nu,\nu)$ and 
\begin{equation*}
\PHI=\begin{pmatrix}
0 & \varphi \\
\varphi & 0  \\
\end{pmatrix}
\end{equation*}
If $\varphi=0$, we find back a compund Poisson process with intensity $\nu$ and symmetric Bernoulli jumps. This corresponds to the simplest model for a random walk in continuous time, constrained to live on a lattice, the tick-grid in financial statistics accounting for the discreteness of price at fin scales. Microstructure noise corresponds to the property that an upward jump of $S$ will be more likely followed by a downward jump and vice versa. This phenomenon lays its roots in microeconomic analysis of price manipulation of agents \cite{ABDL, AMZ, AMZbis, RobRos, Ros, barn}. In our simple phenomenological setting, it will be reproduced by the introduction of the kernel $\varphi$, as empirically demonstrated in \cite{BDHM1}. The question we can now address is the macroscopic stability of the model. Do we retrieve a standard diffusion in the limit $T\rightarrow \infty$ for an approriate scaling of $S$ and how does the effect of $\varphi$ influence the macroscopic volatility?
By Theorems \ref{LG} and \ref{TCL}, we readily obtain an explicit anwser:
\begin{prop}[Macroscopic trace of microstructure noise] \label{limitemacrodim1}
Assume that %$\nu \neq 0$ and that
$\|\varphi\|_{L^1} = \int_0^\infty \varphi(t)dt<1$. Then
$$\big(T^{-1/2}S_{Tv}, v\in [0,1]\big) \rightarrow \big(\sigma W_v,v\in [0,1]\big)\;\;\text{as}\;\;T\rightarrow\infty,$$
in law for the Skorokod topology, where $(W_v)_{v \in [0,1]}$ is a standard Brownian motion and
$$\sigma^2 = \frac{2\nu}{(1-\|\varphi\|_{L^1})(1+\|\varphi\|_{L^1})^2}.$$
\end{prop}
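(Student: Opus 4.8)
The plan is to specialise Theorem~\ref{TCL} to $d=2$, $\mu=(\nu,\nu)$ and $\PHI=\bigl(\begin{smallmatrix}0&\varphi\\\varphi&0\end{smallmatrix}\bigr)$, and then to read off $\sigma$ from the explicit form of the limiting Gaussian matrix, after disposing of the centring term by symmetry. First I would check Assumption~\eqref{sta}: writing $a=\norm{\varphi}_{L^1}$, one has ${\bf K}=\int_0^\infty\PHI(t)\,dt=\bigl(\begin{smallmatrix}0&a\\a&0\end{smallmatrix}\bigr)$, whose eigenvalues are $\pm a$, so $\rho({\bf K})=a<1$ by hypothesis. A direct inversion gives $({\bf Id-K})^{-1}=(1-a^2)^{-1}\bigl(\begin{smallmatrix}1&a\\a&1\end{smallmatrix}\bigr)$, hence $({\bf Id-K})^{-1}\mu=\tfrac{\nu}{1-a}(1,1)^\T$ and $\mathbf{\Sigma}=\tfrac{\nu}{1-a}\,{\bf Id}$. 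Consequently the limiting coefficient matrix in Theorem~\ref{TCL} is $\mathbf{M}:=({\bf Id-K})^{-1}\mathbf{\Sigma}^{1/2}=c\,\bigl(\begin{smallmatrix}1&a\\a&1\end{smallmatrix}\bigr)$ with $c=\sqrt{\nu/(1-a)}\,(1-a^2)^{-1}$. Note that only \eqref{sta} is needed here; we shall not invoke Corollary~\ref{TCLbis} (which would require \eqref{uibis}), handling the expectation instead by the symmetry argument below.

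Next I would exploit the invariance of the model under the transposition of the indices $1$ and $2$: both $\mu$ and $\PHI$ are fixed by it, so by uniqueness in law of the Hawkes process (Definition~\ref{defrigHawkes}) the process $(N_1,N_2)$ has the same law as $(N_2,N_1)$; in particular $E(N_{1,t})=E(N_{2,t})$ for all $t$. Therefore $S_{Tv}=N_{1,Tv}-N_{2,Tv}=\bigpar{N_{1,Tv}-E(N_{1,Tv})}-\bigpar{N_{2,Tv}-E(N_{2,Tv})}$, so that $T^{-1/2}S_{Tv}=\Lambda\bigpar{T^{-1/2}(N_{Tv}-E(N_{Tv}))}$, where $\Lambda(x_1,x_2)=x_1-x_2$ is a continuous linear map from $\Sk([0,1],\R^2)$ to $\Sk([0,1],\R)$ for the Skorokhod topology (a single time change acts on both coordinates). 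By Theorem~\ref{TCL} and the continuous mapping theorem, $T^{-1/2}S_{Tv}$ converges in law for the Skorokhod topology to $\Lambda(\mathbf{M}W_v)=c(1-a)\bigpar{W^{(1)}_v-W^{(2)}_v}$, where $W=(W^{(1)},W^{(2)})$ is a standard $2$-dimensional Brownian motion.

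Finally I would identify the limit: $(W^{(1)}_v-W^{(2)}_v)_{v\in[0,1]}$ is a continuous centred Gaussian process with stationary independent increments of variance $2v$, hence equal in law to $\sqrt 2$ times a standard one-dimensional Brownian motion. Thus $T^{-1/2}S_{Tv}\to\sigma W_v$ with $\sigma^2=2c^2(1-a)^2=\dfrac{2\nu(1-a)}{(1-a^2)^2}=\dfrac{2\nu}{(1-a)(1+a)^2}$, which is the asserted formula. I do not anticipate a real obstacle; the only mild point is the continuity of $\Lambda$ for the Skorokhod topology, which is immediate and can in any case be bypassed, since the limit process is continuous and convergence in the Skorokhod topology to a continuous limit is convergence in the uniform topology, for which taking a fixed linear combination of coordinates is trivially continuous.
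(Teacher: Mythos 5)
Your proof is correct and follows essentially the same route as the paper: specialise Theorem~\ref{TCL} with $\mathbf{K}=\bigl(\begin{smallmatrix}0&a\\a&0\end{smallmatrix}\bigr)$, use the $1\leftrightarrow 2$ exchange symmetry to identify $E(N_{1,t})=E(N_{2,t})$ so that $S=X_1-X_2$ without needing Corollary~\ref{TCLbis}, and take the linear combination of the Gaussian limit (this is exactly the argument the paper carries out in the appendix for the bivariate Proposition~\ref{ex4prop1}). Your computation of $\sigma^2=2\nu/\bigl((1-\|\varphi\|_{L^1})(1+\|\varphi\|_{L^1})^2\bigr)$ and your handling of the continuity of $(x_1,x_2)\mapsto x_1-x_2$ at continuous limits are both accurate.
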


\begin{remark} Note that if we take $\varphi=0$, we retrieve the standard convergence of a compound Poisson process with symmetric jump to a Brownian motion. 
\end{remark}
\begin{remark}
By assumption, $0 \leq \|\varphi\|_{L^1}<1$ and a closer inspection of the function 
$$x \leadsto \sigma(\nu,x)^2=\frac{2\nu}{(1-x)(1+x)^{2}}\;\;\text{for}\;\;x\in [0,1)$$ reveals an interesting feature: for small microstructure effect (namely if $x=\|\varphi\|_{L^1}$ less than $1/3$) the effect of microstructure tends to stabilise the macroscopic variance in the sense that  $\sigma(\nu,x)^2 \leq \sigma(\nu,0)^2$, whereas beyond a critical value $x\approx 0.61$, we have $\sigma(\nu,x)^2 \geq \sigma(\nu,0)^2$ and even $\sigma(\nu,x)^2\rightarrow \infty$ as $x\rightarrow 1$.  
\end{remark}

\subsection{Macroscopic correlations for bivariate assets} \label{correlation}
We now turn to a bivariate price model $S=(S_1,S_2)$ obtained from a Hawkes process in dimension $d=4$, of the form
$$(S_1,S_2)=(N_1-N_2, N_3-N_4)$$
with $\mu=(\mu_1,\mu_2,\mu_3,\mu_4)$ such that
$$\mu_1=\mu_2\;\;\text{and}\;\;\mu_3=\mu_4$$
together with
\begin{equation*}
\PHI=\begin{pmatrix}
0 & 0 & h & 0\\
0 & 0 & 0 & h\\
g & 0 & 0 & 0\\
0 & g & 0 & 0.
\end{pmatrix}
\end{equation*}
The upward jumps of $S_1$ excite the intensity of the upward jumps of $S_2$ and, in a symmetric way, the downward jumps of $S_1$ excite the downward jumps of $S_2$ via the kernel $g:\R_+\rightarrow \R_+$. Likewise,  the upward jumps of $S_2$ excite the upward jumps of $S_1$ and the downward jumps of $S_2$ excite the downward jumps of $S_1$ via the kernel $h:\R_+\rightarrow \R_+$.
%In other words:
%\begin{equation*}
%\PHI=\begin{pmatrix}
%0 & 0 & h & 0\\
%0 & 0 & 0 & h\\
%g & 0 & 0 & 0\\
%0 & g & 0 & 0
%\end{pmatrix}
%\end{equation*}
For simplicity we ignore other cross terms that could produce microstructure noise within the inner jumps of $S_1$ and $S_2$.  This relativeley simple dependence structure at a microscopic level enables to obtain a non-trivial form of the macroscopic correlation of the diffusion limits of $S_1$ and $S_2$.
\begin{prop} \label{ex4prop1}
Assume that $\|h\|_{L^1}\| g\|_{L^1} <1$. The $2$-dimensional processes $T^{-1/2}\bigpar{S_{1,Tv},S_{2,Tv}}_{v\in[0,1]}$
converge in law as $T\to\infty$ for the Skorokod topology to 
$$\begin{pmatrix} X_1\\ X_2\end{pmatrix}= \frac{\sqrt{2}}{(1-\| h\|_{L^1}\| g\|_{L^1})^{3/2}}
\begin{pmatrix}
\nu_1^{1/2}\, W_1 + \nu_2^{1/2}\|h\|_{L^1}\, W_2 \\
\nu_1^{1/2}\|g\|_{L^1} \,W_1 + \nu_2^{1/2}\, W_2
\end{pmatrix}
$$
with
\begin{equation} \label{defnu}
\nu_1={\mu_1+\|h\|_{L^1} \mu_3},\quad
\nu_2={\mu_3+\| g\|_{L^1} \mu_1}.
\end{equation}
and where $(W_1,W_2) = (W_{1,t},W_{2,t})_{t \in [0,1]}$ is a standard Brownian motion.
\end{prop}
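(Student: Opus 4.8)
The plan is to obtain Proposition~\ref{ex4prop1} as a specialisation of Theorem~\ref{TCL} applied to the four-dimensional Hawkes process $N=(N_1,N_2,N_3,N_4)$, composed with the continuous mapping theorem for the linear projection $\pi\colon(x_1,x_2,x_3,x_4)\mapsto(x_1-x_2,\,x_3-x_4)$. First I would verify Assumption~\eqref{sta}: with $a:=\|h\|_{L^1}$ and $b:=\|g\|_{L^1}$,
\[
\mathbf{K}=\int_0^\infty\PHI(t)\,dt=\begin{pmatrix}0&0&a&0\\0&0&0&a\\ b&0&0&0\\0&b&0&0\end{pmatrix},
\qquad\text{hence}\qquad \mathbf{K}^2=ab\,\mathbf{Id},
\]
so the eigenvalues of $\mathbf{K}$ are $\pm\sqrt{ab}$ and $\rho(\mathbf{K})=\sqrt{ab}<1$ is exactly the hypothesis $\|h\|_{L^1}\|g\|_{L^1}<1$. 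Thus \eqref{sta} holds, and Theorem~\ref{TCL} gives that $T^{-1/2}\bigpar{N_{Tv}-E(N_{Tv})}$, $v\in[0,1]$, converges in law on $\Sk([0,1],\R^4)$ to the process $(\mathbf{Id}-\mathbf{K})^{-1}\mathbf{\Sigma}^{1/2}W_v$, $v\in[0,1]$, with $W$ a standard four-dimensional Brownian motion and $\mathbf{\Sigma}$ diagonal, $\mathbf{\Sigma}_{ii}=\bigpar{(\mathbf{Id}-\mathbf{K})^{-1}\mu}_i$.

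Next I would remove the centring $E(N_{Tv})$. Here only \eqref{sta} is assumed, not \eqref{uibis}, so Corollary~\ref{TCLbis} is not directly available; instead I would use the symmetry under the permutation $\sigma=(1\,2)(3\,4)$. Because $\mu_1=\mu_2$, $\mu_3=\mu_4$ and $\phi_{\sigma(i)\sigma(j)}=\phi_{ij}$ for all $i,j$ (immediate from the form of $\PHI$), the intensity of $N_{\sigma(i)}$ rewrites as $\mu_i+\int_{(0,t)}\sum_k\phi_{ik}(t-s)\,dN_{\sigma(k),s}$, so $(N_{\sigma(1)},\dots,N_{\sigma(4)})=(N_2,N_1,N_4,N_3)$ is again a Hawkes process with the same $\mu$ and $\PHI$, hence has the same law as $N$ by the uniqueness recalled in Section~\ref{def Hawkes}. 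In particular $E(N_{1,t})=E(N_{2,t})$ and $E(N_{3,t})=E(N_{4,t})$, so $E(S_{1,t})=E(S_{2,t})=0$ for every $t$. Therefore $T^{-1/2}(S_{1,Tv},S_{2,Tv})=T^{-1/2}\,\pi\bigpar{N_{Tv}-E(N_{Tv})}$, and since $\pi$ induces a continuous map $\Sk([0,1],\R^4)\to\Sk([0,1],\R^2)$, the continuous mapping theorem yields convergence in law to $\pi\bigl((\mathbf{Id}-\mathbf{K})^{-1}\mathbf{\Sigma}^{1/2}W_v\bigr)$, $v\in[0,1]$.

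It then remains to identify this limit. From $\mathbf{K}^2=ab\,\mathbf{Id}$ we get $(\mathbf{Id}-\mathbf{K})^{-1}=(1-ab)^{-1}(\mathbf{Id}+\mathbf{K})$, whence $(\mathbf{Id}-\mathbf{K})^{-1}\mu=(1-ab)^{-1}(\nu_1,\nu_1,\nu_2,\nu_2)^\T$ with $\nu_1,\nu_2$ exactly as in \eqref{defnu} (using $\mu_1=\mu_2$, $\mu_3=\mu_4$), so $\mathbf{\Sigma}^{1/2}=(1-ab)^{-1/2}\,\mathrm{diag}\bigl(\nu_1^{1/2},\nu_1^{1/2},\nu_2^{1/2},\nu_2^{1/2}\bigr)$. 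Multiplying out $\pi(\mathbf{Id}+\mathbf{K})\mathbf{\Sigma}^{1/2}$ and pulling out the scalar $(1-ab)^{-3/2}$, the limit equals
\[
\frac{1}{(1-ab)^{3/2}}\begin{pmatrix}\nu_1^{1/2}\,(W^{(1)}-W^{(2)})+a\,\nu_2^{1/2}\,(W^{(3)}-W^{(4)})\\[3pt] b\,\nu_1^{1/2}\,(W^{(1)}-W^{(2)})+\nu_2^{1/2}\,(W^{(3)}-W^{(4)})\end{pmatrix}.
\]
Since $W^{(1)},\dots,W^{(4)}$ are independent standard Brownian motions, $W_1:=(W^{(1)}-W^{(2)})/\sqrt2$ and $W_2:=(W^{(3)}-W^{(4)})/\sqrt2$ are independent standard Brownian motions, so $(W_1,W_2)$ is a standard two-dimensional Brownian motion; substituting and recalling $a=\|h\|_{L^1}$, $b=\|g\|_{L^1}$ yields exactly the claimed formula.

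Since everything reduces to Theorem~\ref{TCL}, there is no genuine analytic obstacle; the only points needing care are the symmetry argument (which is what allows us to dispense with the stronger Assumption~\eqref{uibis}) and the $4\times4$ matrix bookkeeping --- painless here precisely because $\mathbf{K}^2$ is a scalar multiple of the identity, the special feature that makes a closed form possible.
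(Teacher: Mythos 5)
Your proposal is correct and follows essentially the same route as the paper's own proof: verify \eqref{sta} via $\mathbf{K}^2=\|h\|_{L^1}\|g\|_{L^1}\,\mathbf{Id}$, use the symmetry $(N_1,N_2,N_3,N_4)\overset{d}{=}(N_2,N_1,N_4,N_3)$ to get $S_1=X_1-X_2$, $S_2=X_3-X_4$ with $X=N-E(N)$, then apply Theorem \ref{TCL} and project. You merely spell out more explicitly the continuous-mapping step and the final matrix computation yielding the $\sqrt2/(1-\|h\|_{L^1}\|g\|_{L^1})^{3/2}$ factor, which the paper leaves to the reader.
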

The proof is a consequence of Theorems \ref{LG} and \ref{TCL} and is given in appendix.
\begin{remark} The macroscopic correlation between $S_1$ and $S_2$ is thus equal to the cosine of the angle of the two vectors
$$\bigpar{\nu_1^{1/2}, \nu_2^{1/2}\|h\|_{L^1}}\;\;\text{and}\;\;\bigpar{\nu_1^{1/2}\| g\|_{L^1}, \nu_2^{1/2}}.$$
Obviously, it is always nonnegative and strictly less than $1$ since the determinant $\nu_1^{1/2}\nu_2^{1/2}\bigpar{1-\|h\|_{L^1}\|g\|_{L^1}}$ of the two above vectors is positive unless the $\mu_i$ are all $0$.
%\begin{itemize}
%\item
%La corr\'elation macroscopique entre les deux prix est donc \'egale au cosinus de l'angle entre les deux vecteurs
%$\bigpar{\nu_1^{1/2}, \nu_2^{1/2}\smallint h}$ et $\bigpar{\nu_1^{1/2}\smallint g, \nu_2^{1/2}}$.
%De mani\`ere \'evidente elle est toujours positive ou nulle et strictement plus petite que 1
%(car le d\'eterminant de ces deux vecteurs est \'egal \`a $\nu_1^{1/2}\nu_2^{1/2}\bigpar{1-\smallint h\smallint g}$ qui est $>0$ si les $\mu_i$ ne sont pas tous nuls.)
%\item 
%\end{itemize}
\end{remark}
\subsection{Lead-lag and Epps effect through the cross correlations across-scales}
We keep up with the model and the notation of Section \ref{correlation} but we now study the quantities
$$V_{\De,T}(S_1,S_{1,\tau+\cdot}), \ V_{\De,T}(S_2,S_{2,\tau+\cdot}),\ V_{\De,T}(S_1,S_{2,\tau+\cdot}).$$
In particular, the quantity $V_{\De,T}(S_1,S_{2,\tau+\cdot})$ is a powerful tool for the statistical study of lead-lag effects, {\it i.e.} the fact that the jumps of $S_1$ can anticipate on those of $S_2$ and vice-versa, see for instance \cite{AH, HRY}. The Epps effect -- {\it i.e.} the stylised fact statement that the correlation between the increments of two assets vanishes at fine scales -- can be tracked down likewise. Theorem \ref{SP} enables to characterise in principle the limiting behaviour of these functionals. This is described in details in Proposition \ref{ex4prop2} below.\\

We consider $g$ and $h$ as functions defined on $\R$ by setting $g(t)=h(t)=0$ is $t<0$. Assume that $\| h \|_{L^1}\| g\|_{L^1} <1$. Then the series
%we can define the function $F\in L^1(\R)$
%by
$$F:=\sum_{n\ge 1} (h\star g)^{\star n}$$
converges in $L^1(\R,dt)$. If $f$ is a function on $\R$ we define $\check f$ by $\check f(t)=f(-t)$. We have
\begin{prop} \label{ex4prop2} Assume that $\| h \|_{L^1}\| g\|_{L^1} <1$. Let $(\De_T)_{T>0}$ and $(\tau_T)_{T>0}$ be two families of real numbers such that $\De_T>0$.
If $\De_T/T\to 0$ and $\tau_T/T\to 0$ as $T\to\infty$ we have
$$ V_{\De_T,T}(S_1,S_{1,\tau_T+\cdot}) -
 C_{11}(\De_T,\tau_T)
 \to 0\;\;\text{as}\;\;T\rightarrow \infty\;\;\text{in}\;\;L^2(P),$$
where
$$ C_{11}(\De,\tau)=\tfrac{2}{1-\|h\|_{L^1}\|g\|_{L^1}}\ga_{\De}\star \bigpar{\de_0+F+\check F+F\star \check{F}} \star
 \bigpar{\nu_1\de_0 + \nu_2•, h\star\check h}(\tau),$$
with $$ \gamma_\De(x)=\bigpar{1-\abs{x}/\De}^+.$$
We also have 
$$ V_{\De_T,T}(S_1,S_{2,\tau_T+\cdot}) -
C_{12}(\De_T,\tau_T)
 \to 0\;\;\text{as}\;\;T\rightarrow\infty\;\;\text{in}\;\;L^2(P)$$
 and
$$ V_{\De_T,T}(S_2,S_{1,\tau_T+\cdot}) -
C_{12}(\De_T,-\tau_T)
\to 0\;\;\text{as}\;\;T\rightarrow\infty\;\;\text{in}\;\;L^2(P),$$
with $$ C_{12}(\De,\tau)=
 \tfrac{2}{1-\|h\|_{L^1}\| g\|_{L^1}}\ga_{\De}\star \bigpar{\de_0+F+\check F+F\star \check{F}} \star \bigpar{\nu_2 \check h + \nu_1 g}(\tau).$$
\end{prop}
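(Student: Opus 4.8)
\medskip
\noindent\textbf{Proof strategy.} The plan is to deduce all three statements from Theorem~\ref{SP} in dimension $d=4$, after one explicit evaluation of $\mathbf{v}_{\De,\tau}$; throughout I use $2\times2$ block notation, $I_2$ denoting the $2\times2$ identity. First, since $\mathbf{K}=\int_0^\infty\PHI(t)\,dt=\bigl(\begin{smallmatrix}0 & \|h\|_{L^1}I_2\\ \|g\|_{L^1}I_2 & 0\end{smallmatrix}\bigr)$ has eigenvalues $\pm(\|h\|_{L^1}\|g\|_{L^1})^{1/2}$, the hypothesis $\|h\|_{L^1}\|g\|_{L^1}<1$ is exactly Assumption~\eqref{sta}, so Theorems~\ref{LG}, \ref{TCL} and~\ref{SP} apply. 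Next, the index permutation $(1\,2)(3\,4)$ fixes $\mu$ and $\PHI$, hence fixes the law of $(N_1,N_2,N_3,N_4)$; therefore $E(N_1)=E(N_2)$ and $E(N_3)=E(N_4)$, so with $X=N-E(N)$ one has $S_1=X_1-X_2$ and $S_2=X_3-X_4$ identically. With the constant vectors $a=(1,-1,0,0)^\T$ and $b=(0,0,1,-1)^\T$ this gives $V_{\De,T}(S_1,S_{1,\tau+\cdot})=a^\T\mathbf{V}_{\De,T}(X,X_{\tau+\cdot})\,a$, $V_{\De,T}(S_1,S_{2,\tau+\cdot})=a^\T\mathbf{V}_{\De,T}(X,X_{\tau+\cdot})\,b$ and $V_{\De,T}(S_2,S_{1,\tau+\cdot})=b^\T\mathbf{V}_{\De,T}(X,X_{\tau+\cdot})\,a$. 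By Theorem~\ref{SP}, $\mathbf{V}_{\De_T,T}(X,X_{\tau_T+\cdot})-\mathbf{v}_{\De_T,\tau_T}\to0$ in $L^2(P)$ entrywise, hence each of these fixed bilinear forms converges in $L^2(P)$ to the same bilinear form of $\mathbf{v}_{\De_T,\tau_T}$. It therefore suffices to identify $a^\T\mathbf{v}_{\De,\tau}a$, $a^\T\mathbf{v}_{\De,\tau}b$ and $b^\T\mathbf{v}_{\De,\tau}a$ with $C_{11}(\De,\tau)$, $C_{12}(\De,\tau)$ and $C_{12}(\De,-\tau)$ respectively.

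For that, I would first compute the two ingredients of $\mathbf{v}_{\De,\tau}$. The block form of $\mathbf{K}$ gives $(\mathbf{Id}-\mathbf{K})^{-1}=(1-\|h\|_{L^1}\|g\|_{L^1})^{-1}\bigl(\begin{smallmatrix}I_2 & \|h\|_{L^1}I_2\\ \|g\|_{L^1}I_2 & I_2\end{smallmatrix}\bigr)$, whence, using $\mu_1=\mu_2$, $\mu_3=\mu_4$ and \eqref{defnu},
$$\mathbf{\Sigma}=\tfrac{1}{1-\|h\|_{L^1}\|g\|_{L^1}}\,\mathrm{diag}(\nu_1,\nu_1,\nu_2,\nu_2).$$
Since $\PHI=\bigl(\begin{smallmatrix}0 & hI_2\\ gI_2 & 0\end{smallmatrix}\bigr)$ and convolution is commutative, the iterates~\eqref{defPHI} satisfy $\PHI_{2m}=(h\star g)^{\star m}\mathbf{Id}$ and $\PHI_{2m+1}=\bigl(\begin{smallmatrix}0 & h\star(h\star g)^{\star m}\,I_2\\ g\star(h\star g)^{\star m}\,I_2 & 0\end{smallmatrix}\bigr)$; summing over $n\ge1$ and setting $G:=\de_0+F=\sum_{m\ge0}(h\star g)^{\star m}$, this yields
$$\mathbf{Id}\,\de_0+\PSI=\begin{pmatrix}G\,I_2 & \tilde h\,I_2\\ \tilde g\,I_2 & G\,I_2\end{pmatrix},\qquad \tilde h:=h\star G,\quad \tilde g:=g\star G.$$

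I would then substitute into the second (measure) expression for $\mathbf{v}_{\De,\tau}$ in Theorem~\ref{SP}. Because $\mathbf{\Sigma}$ is diagonal, from $a^\T(\mathbf{Id}\,\de_0+\PSI)=(G,-G,\tilde h,-\tilde h)$, $(\mathbf{Id}\,\de_0+\PSI)^\T a=(G,-G,\tilde h,-\tilde h)^\T$ and $(\mathbf{Id}\,\de_0+\PSI)^\T b=(\tilde g,-\tilde g,G,-G)^\T$ one gets, with $c=(1-\|h\|_{L^1}\|g\|_{L^1})^{-1}$,
$$a^\T\mathbf{v}_{\De,\tau}a=2c\!\int_{\R_+^2}\!\ga_\De(t-s-\tau)\bigl(\nu_1\,G(ds)\,G(dt)+\nu_2\,\tilde h(s)\tilde h(t)\,ds\,dt\bigr),$$
and, similarly, $a^\T\mathbf{v}_{\De,\tau}b$ equals $2c\int_{\R_+^2}\ga_\De(t-s-\tau)\bigl(\nu_1\,G(ds)\,\tilde g(t)\,dt+\nu_2\,\tilde h(s)\,G(dt)\,ds\bigr)$. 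Using the identity $\int_{\R_+^2}\ga_\De(t-s-\tau)\,\alpha(ds)\,\beta(dt)=\bigl(\ga_\De\star\check\alpha\star\beta\bigr)(\tau)$ (valid for finite measures on $\R_+$ since $\ga_\De$ is even), together with $\check G\star G=\de_0+F+\check F+F\star\check F$, $\check{\tilde h}\star\tilde h=(h\star\check h)\star(\check G\star G)$, $\check G\star\tilde g=g\star(\check G\star G)$ and $\check{\tilde h}\star G=\check h\star(\check G\star G)$, and finally factoring out $\ga_\De\star(\de_0+F+\check F+F\star\check F)$, I obtain exactly $C_{11}(\De,\tau)$ and $C_{12}(\De,\tau)$. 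The third assertion then follows from the relation $\mathbf{v}_{\De,\tau}=\mathbf{v}_{\De,-\tau}^\T$ (a Remark after Theorem~\ref{SP}), which gives $b^\T\mathbf{v}_{\De,\tau}a=a^\T\mathbf{v}_{\De,-\tau}b=C_{12}(\De,-\tau)$.

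The substantive part of the argument — and essentially the only place where care is needed — is the distribution-valued bookkeeping in the last step: deriving the closed block form of $\PSI$ from the alternating structure of the convolution powers of $\PHI$, and then pushing the $4\times4$ matrix products through the measures $\mathbf{Id}\,\de_0+\PSI$ (a Dirac mass plus an $L^1$ density) while correctly tracking the reflections $\check{\,\cdot\,}$ created when the double integral over $\R_+^2$ is rewritten as a triple convolution. None of this is deep; the reduction to Theorem~\ref{SP} and the passage to the $L^2$ limit are immediate by linearity.
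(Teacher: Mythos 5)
Your proof is correct and follows essentially the same route as the paper's appendix: reduce to Theorem \ref{SP} via the bilinear forms $a^\T\mathbf{V}_{\De,T}(X,X_{\tau+\cdot})\,a$ and $a^\T\mathbf{V}_{\De,T}(X,X_{\tau+\cdot})\,b$, compute $\PSI$ from the alternating convolution powers of $\PHI$ and $\mathbf{\Sigma}$ from $(\mathbf{Id-K})^{-1}\mu$, then evaluate the resulting double integrals by the identity $\int_{\R_+^2}\ga_\De(t-s-\tau)\,\alpha(ds)\beta(dt)=(\ga_\De\star\beta\star\check\alpha)(\tau)$. The only (harmless) variations are that you make the third statement explicit through the relation $\mathbf{v}_{\De,\tau}=\mathbf{v}_{\De,-\tau}^\T$, which the paper leaves implicit, and you state the spectral radius correctly as $(\|h\|_{L^1}\|g\|_{L^1})^{1/2}$.
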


\begin{remark}[The Epps effect]
For $f\in L^1$ we have $\ga_\De\star f\to 0$ pointwise as $\De\to 0$. Therefore we obtain 
$$C_{12}(\De,\tau)\to 0\;\;\text{as}\;\;\De\to 0\;\;\text{for every}\;\;\tau\in\R$$
and this characterises the Epps effect. The same argument, together with $\ga_\De(0)=1$, yields the convergence
$$C_{11}(\De,\tau) \to \frac{2\nu_1}{1-\|h\|_{L^1}\|g\|_{L^1}} \ind{\set{\tau=0}}\quad
 \text{as $\De\to 0$.}$$
\end{remark}
 \begin{remark}[The Lead-Lag effect] Following \cite{BDHM1} and as a consensus in the literature, we say that a lead-lag effect is present between $S_1$ and $S_2$  if there exists $\De>0$ and $\tau\not=0$ such that
$$C_{12}(\De,\tau)\not=C_{12}(\De,-\tau).$$
Therefore, an absence of any lead-lag effect is obtained if and only if the function
$\bigpar{\de_0+F+\check F+F\star \check{F}} \star \bigpar{\nu_2 \check h + \nu_1 g}$ is even. This is the case if $\nu_1 g=\nu_2 h$.
Now, let $\ep>0$. If $g=h\star\de_\ep$ and $\mu_1=\mu_3$, then
$$C_{12}(\De,-\tau)=C_{12}(\De,\tau+\ep)\;\;\text{for every}\;\;\De>0,\tau\in\R.$$
This particular choice for $h$ and $g$ models in particular the property that $S_1$ acts on $S_2$ in the same manner as $S_2$ acts on $S_1$ with an extra temporal shift of $\ep$. Since we always have
$$\lim_{\tau\to\pm\infty}C_{12}(\De,\tau)=0,$$
there exists $\tau_0$ such that $C_{12}(\De,\tau_0+\ep)\not=C_{12}(\De,\tau_0)$, or in other words, we have a lead-lag effect.
%Ce choix de $h$ et $g$ mod\'elise en particulier que $S_1$ agit sur $S_2$ de la m\^eme fa\c on que $S_2$ agit sur $S_1$ mais avec un d\'ecalage temporel de $\ep$ ($\ep$ plus tard).
%On le retrouve donc au niveau des corr\'elations.\\
%Comme on a toujours $\lim_{\tau\to\pm\infty}C_{12}(\De,\tau)=0$, il existe
%$\tau_0$ tel que $C_{12}(\De,\tau_0+\ep)\not=C_{12}(\De,\tau_0)$, autrement dit il y a du ``lead-lag".
\end{remark}
\begin{remark}[Macroscopic correlations] Since $\gamma_\De\to 1$ as $\De\to\infty$, we obtain the convergence
\begin{multline*}
C_{11}(\De,\tau) \to
  \tfrac{2}{1-\| h\|_{L^1}\| g\|_{L^1}} \int \bigpar{\de_0+F+\check F+F\star \check{F}} \star
 \bigpar{\nu_1\de_0 + \nu_2 h\star\check h}\\
= \tfrac{2}{1-\| h\|_{L^1}\| g\|_{L^1}}  \Bigpar{1+2\smallint F+(\smallint F)^2}\bigpar{\nu_1+\nu_2(\smallint h)^2}
 =\text{Var}(X_1).
 \end{multline*}
Likewise, we have $C_{12}(\De,\tau)\to \text{Cov}(X_1,X_2)$ as $\Delta \rightarrow \infty$.
\end{remark}
\begin{remark}
Finally, note that if we use the convenient parametrisation $h(t)=\al_1 \exp(-\be_1 t)$ and $g(t)=\al_2\exp(-\be_2 t)$, assuming further $\|h\|_{L^1}\| g\|_{L^1}=\al_1\al_2/(\be_1\be_2)<1$, then standard computations yield the explicit form
$$ F(t)=\frac{\al_1\al_2}{\rho_1-\rho_2}\bigpar{\exp(-\rho_2 t)-\exp(-\rho_1 t)} \ind{\R_+}(t)$$
with $$\rho_1=\frac{1}{2}\Bigpar{\be_1+\be_2+\sqrt{(\be_1-\be_2)^2+4\al_1\al_2}},$$
$$\rho_2=\frac{1}{2}\Bigpar{\be_1+\be_2-\sqrt{(\be_1-\be_2)^2+4\al_1\al_2}}$$
and we have $0 < \rho_2 < \rho_1$.
This allows to obtain a close formula for $C_{11}$ and $C_{12}$. We do not pursue these computations here.
\end{remark}
%\begin{prop} \label{ex4prop3}
%\marginpar{En faire une simple remarque?}
%If $h(t)=\al_1 \exp(-\be_1 t)$ and $g(t)=\al_2\exp(-\be_2 t)$ with $\smallint h\smallint g=(\al_1\al_2)/(\be_1\be_2)<1$ then
%$$ F(t)=\frac{\al_1\al_2}{\rho_1-\rho_2}\bigpar{\exp(-\rho_2 t)-\exp(-\rho_1 t)} \ind{\R_+}(t)$$
%with $$\rho_1=\frac{1}{2}\Bigpar{\be_1+\be_2+\sqrt{(\be_1-\be_2)^2+4\al_1\al_2}}$$
%$$\rho_2=\frac{1}{2}\Bigpar{\be_1+\be_2-\sqrt{(\be_1-\be_2)^2+4\al_1\al_2}}$$
%and $\rho_1>\rho_2>0$.
%\end{prop}
\section{Preparation for the proofs} \label{preparation}
In the sequel, we work in the setting of Sections \ref{def Hawkes} and \ref{Brownian limits} under Assumption \eqref{sta}.
\begin{lemma} \label{maj} For all finite stopping time $S$ one has:
\begin{gather} \label{compensateurN}
E(N_S)= \mu E(S) + E\Bigpar{\int_0^S \PHI(S-t) N_t dt}\\
\label{majEN}
E(N_S) \le (\mathbf{Id-K})^{-1} \mu \, E(S) \quad\text{componentwise}.
\end{gather}
\end{lemma}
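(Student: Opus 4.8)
The plan is to first establish, for every finite stopping time $S$, the compensation identity $E(N_{i,S})=E\bigpar{\int_0^S\la_{i,s}\,ds}$; then to substitute the Hawkes form \eqref{Hawkes} of $\la_{i,s}$ and rearrange the resulting double integral by Tonelli to obtain \eqref{compensateurN}; and finally to feed \eqref{compensateurN} back into a crude componentwise domination to get \eqref{majEN}. Throughout, the one delicate point is that I cannot presuppose either $E(N_S)<\infty$ or the non-explosion $T_\infty=\infty$, so every step will first be carried out at the truncated time $S\land T_n$ and only passed to the limit at the end.

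For the compensation identity: fix $n$, and recall that $t\mapsto N_{i,t\land T_n}-\int_0^{t\land T_n}\la_{i,s}\,ds$ is a martingale. Letting $t\to\infty$ with monotone convergence gives $E(N_{i,T_n})=E\bigpar{\int_0^{T_n}\la_{i,s}\,ds}$, and since at most $n$ jumps occur before $T_n$ this common value is $\le n$; in particular the stopped martingale is bounded in absolute value by the integrable variable $N_{i,T_n}+\int_0^{T_n}\la_{i,s}\,ds$, hence uniformly integrable, so optional sampling at the finite stopping time $S$ yields $E(N_{i,S\land T_n})=E\bigpar{\int_0^{S\land T_n}\la_{i,s}\,ds}$.

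Plugging in \eqref{Hawkes}, the constant part of the intensity contributes $\mu_i\,E(S\land T_n)$. For the double integral all integrands are nonnegative, so Tonelli allows me to interchange the $ds$-integration with the $dN_{j,u}$-integration; writing $R=S\land T_n$ this gives
$$\int_0^{R}\!\!\int_{(0,s)}\phi_{ij}(s-u)\,dN_{j,u}\,ds=\int_{(0,R)}\Bigpar{\int_0^{R-u}\phi_{ij}(v)\,dv}dN_{j,u}=\int_0^{R}\phi_{ij}(R-t)\,N_{j,t}\,dt,$$
where the last equality is the same Tonelli manipulation applied to $\int_0^R\phi_{ij}(R-t)N_{j,t}\,dt$ after writing $N_{j,t}=\int_{(0,t]}dN_{j,u}$. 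Summing over $j$ and reading this in matrix form gives \eqref{compensateurN} with $S$ replaced by $R=S\land T_n$. Now I bound $N_t\le N_{R}$ componentwise inside the convolution term and use $\int_0^{R}\PHI(v)\,dv\le\int_0^\infty\PHI(v)\,dv=\mathbf{K}$ (Assumption \eqref{sta}) to get $E(N_{R})\le\mu\,E(S)+\mathbf{K}\,E(N_{R})$. Since $E(N_{R})$ is finite componentwise (it is $\le n$), and since $\mathbf{K}\ge0$ with $\rho(\mathbf{K})<1$ so that $(\mathbf{Id-K})^{-1}=\sum_{k\ge0}\mathbf{K}^k\ge0$ and $\mathbf{K}^m E(N_R)\to0$, iterating this inequality yields $E(N_{S\land T_n})\le(\mathbf{Id-K})^{-1}\mu\,E(S)$.

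To conclude: taking $S$ deterministic in the last bound and sending $n\to\infty$ (monotone convergence) gives $E(N_{i,t\land T_\infty})<\infty$ for every $t$, which forces $T_\infty=\infty$ almost surely, since on $\{T_\infty\le t\}$ the variable $N_{\cdot,t\land T_\infty}$ would equal $+\infty$; this supplies the missing non-explosion input. With $S\land T_n\uparrow S$, monotone convergence in the $S\land T_n$-version of \eqref{compensateurN} gives \eqref{compensateurN} itself, and monotone convergence in $E(N_{S\land T_n})\le(\mathbf{Id-K})^{-1}\mu\,E(S)$ gives \eqref{majEN} (the case $E(S)=\infty$ being trivial). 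I expect the only real obstacle to be exactly this interleaving of integrability and non-explosion: the clean way around it is to keep everything at $S\land T_n$, where all quantities are manifestly bounded by $n$, and take the limit only once.
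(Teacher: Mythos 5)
Your proof is correct and follows essentially the same route as the paper: truncation at $S\land T_n$ where all quantities are bounded by $n$, the compensator identity, a Fubini rearrangement turning the double integral into $\int_0^{R}\PHI(R-t)N_t\,dt$ (the paper does this via an explicit integration by parts with $\mathbf{\Phi}(t)=\int_0^t\PHI(s)\,ds$, which is equivalent to your two-sided Tonelli manipulation), the iteration of $E(N_R)\le \mu E(S)+\mathbf{K}E(N_R)$ using $\rho(\mathbf{K})<1$, and monotone convergence in $n$. Your explicit treatment of optional sampling via uniform integrability and of non-explosion (which the paper states separately as its Lemma 1, deduced from \eqref{compensateurN}) is a welcome but inessential addition.
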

\begin{proof}
Recall that $(T_p)_{p\ge 1}$ denote the successive jump times of $N$ and set $S_p=S\land T_p$.
Since the stochastic intensities $\la_i$ are given by \eqref{Hawkes} one has
$$E(N_{S_p})=\mu E(S_p)+ E\Bigpar{\int_0^{S_p} dt \int_{(0,t)} \PHI(t-s) dN_s}.$$
Moreover, by Fubini theorem
\begin{align*}
\int_0^{S_p} dt \int_{(0,t)} \PHI(t-s) dN_s&= \int_{[0,S_p)} \bigpar{ \int_s^{S_p} \PHI(t-s) dt} dN_s\\
&= \int_{[0,S_p)}  \bigpar{\int_0^{S_p-s} \PHI(t) dt} dN_s.
\end{align*}
Now, integrating by part with $\mathbf{\Phi}(t)=\int_0^t \PHI(s) ds$, we can write
$$0= \mathbf{\Phi}(0)  N_{S_p} - \mathbf{\Phi}(S_p)  N_{0}
= \int_{(0,S_p]} \mathbf{\Phi}(S_p-t) dN_t - \int_0^{S_p} \PHI(S_p-t) N_t dt.$$
Remark that both sides of the above equality are finite since $\sum_{i=1}^d N_{i,S_p} \le p$.
We obtain
$$ E(N_{S_p})=\mu E(S_p)+ E\Bigpar{\int_0^{S_p} \PHI(S_p-t) N_t dt}$$
and derive \eqref{compensateurN} using that $N_{S_p} \uparrow N_S$ as $p\to\infty$ and
$$  \int_0^{S_p}\! \PHI(S_p-t) N_t dt= \int_0^{S_p}\! \PHI(t) N_{S_p-t} dt
\, \uparrow \, \int_0^{S}\! \PHI(t) N_{S-t} dt= \int_0^{S}\! \PHI(S-t) N_t dt .$$
We next prove \eqref{majEN}.
We have
\begin{align*}
E( N_{S_p}) &= E\Bigpar{S_p\mu + \int_0^{S_p} \PHI(S_p-t) N_t \,dt } \\
&\le E(S_p)\mu + E\Bigpar{ \int_0^{\infty} \PHI(S_p-t) N_t \,dt } \quad \text{componentwise,}\\
& = E(S_p)\mu + \mathbf{K} E(N_{S_p}).
\end{align*}
By induction 
$$E( N_{S_p}) \le \Bigpar{\mathbf{Id} + \mathbf{K} + \dots + {\mathbf{K}}^{n-1}} {E(S_p) \mu } +
{\mathbf{K}}^n E(N_{S_p})$$
componentwise for all integer $n$.
On the one hand, $\sum_{i=1}^d N_{i,S_p} \le p$. On the other hand, since $\rho(\mathbf{K})<1$ we have $\mathbf{K}^n \to 0$ as $n\to\infty$ and
$$\sum_{n=0}^\infty \mathbf{K}^n=(\mathbf{Id-K})^{-1},$$
therefore
$$E( N_{S_p}) \le (\mathbf{Id-K})^{-1} E(S)\mu.$$
This readily yields \eqref{majEN} since $E(N_S)=\lim_p E(N_{S_p})$.
\end{proof}

Let $\PHI_n$, $n\ge 1$, and $\PSI=\sum_{n\ge 1}\PHI_n$ be defined as in Theorem \ref{TCL}. By induction it is easily shown that
$\int_0^\infty \PHI_n(t) dt = \mathbf{K}^n$ for all $n$. Therefore $\int_0^\infty \PSI(t) dt = \sum_{n\ge 1} \mathbf{K}^n$ is finite componentwise by assumption \eqref{sta}. We next state a multivariate version of the well known renewal equation, which proof we recall for sake of completeness. 
\begin{lemma} \label{inverse}
Let $h$ be a Borel and locally bounded function from $\R_+$ to $\R^d$.
Then there exists a unique locally bounded function $f:\R_+\to\R^d$ solution to
\begin{equation} \label{equ}
 f(t)= h(t) + \int_0^t \PHI(t-s) f(s) ds \quad \forall t\ge 0,
 \end{equation}
given by
$$ f_h(t) = h(t) + \int_0^t \PSI(t-s) h(s) ds.$$
\end{lemma}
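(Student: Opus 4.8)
The plan is to establish both existence and uniqueness for the multivariate renewal equation \eqref{equ} by reducing it to iteration of the convolution operator $T_{\PHI}g(t) := \int_0^t \PHI(t-s) g(s)\,ds$ on the space of locally bounded Borel functions $\R_+ \to \R^d$. First I would fix an arbitrary horizon $[0,A]$ and note that on $[0,A]$ both $h$ and $\PHI$ are bounded: $\sup_{[0,A]}\norm{h(t)} =: H_A < \infty$ and, since each $\phi_{ij}$ is locally integrable by \eqref{nonexplosion}, we have $\int_0^A \norm{\PHI(s)}\,ds =: k_A < \infty$ (in any fixed matrix norm). The key estimate is that the $n$-fold iterate of $T_{\PHI}$ has a convolution kernel given exactly by $\PHI_n$ from \eqref{defPHI}; that is, $T_{\PHI}^n g(t) = \int_0^t \PHI_n(t-s) g(s)\,ds$, which follows by an easy induction using associativity of convolution. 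Consequently, for $g$ bounded by $G$ on $[0,A]$ one gets $\sup_{[0,A]} \norm{T_{\PHI}^n g(t)} \le G \int_0^A \norm{\PHI_n(s)}\,ds$, and the standard bound $\int_0^A \norm{\PHI_n(s)}\,ds \le k_A^n/n!$ — obtained by induction, exploiting that convolving nonnegative kernels on a finite interval contracts the $L^1$ mass super-geometrically just as for the scalar exponential — shows $\sum_n T_{\PHI}^n$ converges absolutely and uniformly on $[0,A]$.

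With this in hand, uniqueness is immediate: if $f_1, f_2$ both solve \eqref{equ}, then $\delta := f_1 - f_2$ satisfies $\delta = T_{\PHI}\delta$, hence $\delta = T_{\PHI}^n \delta$ for every $n$, and the bound above forces $\sup_{[0,A]}\norm{\delta} \le (\sup_{[0,A]}\norm{\delta})\, k_A^n/n! \to 0$, so $\delta \equiv 0$ on $[0,A]$; since $A$ was arbitrary, $f_1 = f_2$ on $\R_+$. For existence, I would verify directly that $f_h(t) = h(t) + \int_0^t \PSI(t-s) h(s)\,ds = h(t) + \sum_{n\ge 1} \int_0^t \PHI_n(t-s)h(s)\,ds$ solves \eqref{equ}: apply $T_{\PHI}$ termwise (justified by the uniform convergence just established, together with Fubini to interchange the sum with the convolution integral), obtaining $T_{\PHI} f_h = \sum_{n\ge 1} T_{\PHI}^{n} h \cdot$ — more precisely $\int_0^t\PHI(t-s)f_h(s)\,ds = \int_0^t \PHI_1(t-s)h(s)\,ds + \sum_{n\ge 2}\int_0^t \PHI_n(t-s)h(s)\,ds = \int_0^t \PSI(t-s)h(s)\,ds = f_h(t) - h(t)$, which is exactly \eqref{equ}. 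Local boundedness of $f_h$ on each $[0,A]$ follows from $\norm{f_h(t)} \le H_A(1 + \int_0^A \norm{\PSI(s)}\,ds)$ and $\int_0^A\norm{\PSI(s)}\,ds \le \sum_n k_A^n/n! = e^{k_A} - 1 < \infty$.

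One point requiring a little care, and the only mild obstacle, is that the $\PHI_n$ a priori take values in matrices with entries in $[0,\infty]$, so I should first record that under \eqref{nonexplosion} (local integrability of $\PHI$) each $\PHI_n$ is in fact finite and locally integrable — this is precisely the content of the estimate $\int_0^A \norm{\PHI_n(s)}\,ds \le k_A^n/n!$, which is proved by induction on $n$ before anything else. Note that here we do not need the spectral-radius assumption \eqref{sta}: local integrability alone suffices for the renewal equation on $[0,\infty)$, since all estimates are performed on bounded intervals; assumption \eqref{sta} is only needed later to control the \emph{global} $L^1$ behaviour of $\PSI$ via $\int_0^\infty \PHI_n = \mathbf{K}^n$. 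I would remark this explicitly so that the lemma can be invoked in the broader setting. The remaining verifications (measurability of $f_h$, legitimacy of the Fubini interchanges for nonnegative then general integrands) are routine and I would dispatch them in a sentence.
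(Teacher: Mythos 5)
Your overall scheme (iterating the convolution operator, identifying its $n$-fold kernel with $\PHI_n$, and summing) is reasonable, but the estimate on which everything rests is false: for kernels that are merely (locally) integrable there is no factorial gain in the $L^1$ mass of convolution powers on $[0,A]$. The correct induction gives only
$$\int_0^A \norm{\PHI_{n+1}(t)}\,dt \;\le\; \Bigpar{\int_0^A\norm{\PHI(t)}\,dt}\Bigpar{\int_0^A\norm{\PHI_n(t)}\,dt},$$
i.e. $\int_0^A\norm{\PHI_n}\le k_A^{\,n}$, not $k_A^{\,n}/n!$. A concrete counterexample in dimension $1$: $\phi=2\,\ind{[0,1]}$, $A=2$, gives $k_A=2$ and $\int_0^2\phi_2=4=k_A^2$, whereas your bound would give $2$. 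The familiar factorial bound is the pointwise one, $\norm{\PHI_n(t)}\le C^n t^{n-1}/(n-1)!$, and it requires $\PHI$ to be locally \emph{bounded}; here the $\phi_{ij}$ are only locally integrable, so that route is unavailable. Since you explicitly discard the spectral-radius assumption \eqref{sta}, nothing in your argument controls $k_A$ (which may well exceed $1$), and then both halves of your proof collapse: uniqueness via $\delta=T_\PHI^n\delta$ no longer forces $\delta=0$, and the series defining $\PSI$ is not shown to be finite or locally integrable, so the candidate $f_h$ is not even well defined by your estimates.

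Two ways to repair this. Either keep \eqref{sta}, as the paper does (it is in force throughout the section where the lemma lives): then $\int_0^A\norm{\PHI_n}\le\text{const}\cdot\norm{\mathbf{K}^n}$ decays geometrically because $\rho(\mathbf{K})<1$, your iteration argument goes through verbatim, and in fact it gives a slightly different (and arguably cleaner) uniqueness proof than the paper's, which instead integrates the inequality $g(t)\le\int_0^t\PHI(t-s)g(s)\,ds$ over all of $\R_+$ and uses $\rho(\mathbf{K})<1$, while existence in the paper is a direct verification using the identity $\int_0^t\PHI(t-s)\PSI(s)\,ds=\PSI(t)-\PHI(t)$. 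Or, if you really want the lemma under local integrability alone, replace the factorial claim by an exponential-weight argument: for $\la$ large enough the matrix $\int_0^A e^{-\la t}\PHI(t)\,dt$ has spectral radius $<1$ (dominated convergence), and then $\int_0^A e^{-\la t}\PHI_n(t)\,dt$ decays geometrically, which restores all your estimates on $[0,A]$. As written, however, the proposal has a genuine gap at its central step.
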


\begin{proof}
Since $\PSI \in L^1(dt)$ and $h$ is locally bounded, the function $f_h$ is locally bounded. Moreover $f_h$ satisfies \eqref{equ}. It follows that
\begin{align*}
\int_0^t \PHI(t-s) f_h(s) ds&= \int_0^t \PHI(t-s) h(s) ds + \int_0^t ds \PHI(t-s) \int_0^s \PSI(s-r) h(r) dr\\
&=\int_0^t \PHI(t-s) h(s) ds + \int_0^t dr \int_r^t ds \PHI(t-s) \PSI(s-r) h(r)\\
&=\int_0^t \PSI(t-r) h(r)dr
\end{align*}
since
$ \int_0^t \PHI(t-s) \PSI(s) ds = \PSI(t) -\PHI(t)$. As for the uniqueness,  if $f$ satisfies \eqref{equ} then
$$ f_h(t)-f(t) =\int_0^t \PHI(t-s)(f_h(s)-f(s))ds$$
thus if $g_i(t)=\abs{f_{h,i}(t)-f_i(t)}$, $1\le i\le d$, one has
$$ g(t) \le \int_0^t \PHI(t-s) g(s) ds \quad \text{componentwise,}$$
which yields
$$ \int_0^\infty g(t) dt \le \mathbf{K} \int_0^\infty g(t) dt\quad \text{componentwise}.$$
Since $\rho(\mathbf{K})<1$ it follows that $f=f_h$ almost everywhere.
Therefore 
$$\int_0^t \PHI(t-s) f(s) ds =  \int_0^t \PHI(t-s) f_h(s)ds\;\;\text{for all}\;\;t$$
and thus $f=f_h$ since both function satisfies \eqref{equ}.
\end{proof}

Define the $d$-dimensional martingale $(M_t)_{t\ge 0}$ by
$$ M_t= N_t-\int_0^t \la_s\, ds \quad \text{with} \quad \la=(\la_1,\dots,\la_d).$$
\begin{lemma} \label{repre} For all $t\ge 0$:
\begin{gather} \label{repreEN}
E(N_t)= t\mu + \bigpar{\int_0^t \PSI(t-s) s\, ds} \mu,\\
\label{repreN}
N_t- E(N_t)= M_t + \int_0^t \PSI(t-s) M_s ds.
\end{gather}
\end{lemma}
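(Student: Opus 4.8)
The plan is to derive both \eqref{repreEN} and \eqref{repreN} from the fundamental integral equation satisfied by $N_t - E(N_t)$, by recognizing that equation as an instance of the renewal equation of Lemma \ref{inverse} and then inverting it with the kernel $\PSI$. First I would write the defining relation for the martingale $M_t = N_t - \int_0^t \la_s\,ds$ together with the Hawkes intensity \eqref{Hawkes}, which gives
$$N_t = M_t + \int_0^t \la_s\,ds = M_t + t\mu + \int_0^t \int_{(0,s)} \PHI(s-u)\,dN_u\,ds.$$
Applying Fubini and the integration-by-parts identity already used in the proof of Lemma \ref{maj} (namely $\int_0^t ds \int_{(0,s)} \PHI(s-u)\,dN_u = \int_0^t \PHI(t-s) N_s\,ds$), this becomes
$$N_t = M_t + t\mu + \int_0^t \PHI(t-s) N_s\,ds.$$
Taking expectations and using $E(M_t)=0$ (which is legitimate since $N_t \in L^2(P)$ by Theorem \ref{LG}, so all the integrals are finite and Fubini applies), I get $E(N_t) = t\mu + \int_0^t \PHI(t-s) E(N_s)\,ds$; subtracting this from the previous display yields
$$X_t := N_t - E(N_t) = M_t + \int_0^t \PHI(t-s) X_s\,ds.$$

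Now the key step: this is exactly equation \eqref{equ} of Lemma \ref{inverse} with $h$ replaced by the (random, but for each fixed $\omega$ locally bounded in $t$) path $M_\point$. Since $M$ has locally bounded paths (it is a difference of the nondecreasing locally bounded $N$ and the continuous process $\int_0^\point \la_s\,ds$), Lemma \ref{inverse} applies pathwise and gives the unique solution $X_t = M_t + \int_0^t \PSI(t-s) M_s\,ds$, which is \eqref{repreN}. For \eqref{repreEN} I would instead apply Lemma \ref{inverse} to the deterministic equation $E(N_t) = t\mu + \int_0^t \PHI(t-s) E(N_s)\,ds$, i.e. with $h(t) = t\mu$, which is locally bounded; the lemma gives $E(N_t) = t\mu + \int_0^t \PSI(t-s)\,s\,\mu\,ds$, which is \eqref{repreEN}. (Alternatively, \eqref{repreEN} follows from \eqref{repreN} by taking expectations, using $E(M_s)=0$ and Fubini justified by $\PSI \in L^1$ and the bound \eqref{majEN}.)

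The main obstacle is purely a matter of justifying the interchange of expectation and the various integrals, and of checking the integrability needed to apply $E(M_t)=0$ and Fubini's theorem: this rests on $N_t \in L^2(P)$ from Theorem \ref{LG} and on the componentwise bound $E(N_t) \le (\mathbf{Id-K})^{-1}\mu\, t$ from \eqref{majEN}, together with $\PSI \in L^1(dt)$. Once these integrability facts are in place, the identification of the integral equation with the renewal equation and the invocation of Lemma \ref{inverse} are immediate, so there is no substantive difficulty beyond careful bookkeeping.
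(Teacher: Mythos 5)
Your proposal is correct and takes essentially the same route as the paper: establish the renewal equation $E(N_t)=t\mu+\int_0^t\PHI(t-s)E(N_s)\,ds$ (the paper gets it directly from \eqref{compensateurN} and Fubini), apply Lemma \ref{inverse} with $h(t)=t\mu$ to get \eqref{repreEN}, then subtract to obtain $X_t=M_t+\int_0^t\PHI(t-s)X_s\,ds$ and apply Lemma \ref{inverse} pathwise with $h=M$ to get \eqref{repreN}. One caveat: you should not justify $E(M_t)=0$ by citing $N_t\in L^2(P)$ from Theorem \ref{LG}, since the proof of that theorem uses the present lemma (via Lemma \ref{EN} and \eqref{repreN}), so the citation is circular; the componentwise bound \eqref{majEN} of Lemma \ref{maj}, which you also invoke, already gives $E(N_t)<\infty$ and hence (by localization along $T_p$ and monotone/dominated convergence) that $M$ is a true martingale, which is all that is needed.
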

\begin{proof}
By \eqref{compensateurN} of Lemma \ref{maj} and Fubini theorem, we get
$$ E(N_t)= t\mu +\int_0^t \PHI(t-s) E(N_s) \,ds .$$
Besides, $t\leadsto E(N_t)$ is locally bounded in view of \eqref{majEN}.
Applying Lemma \ref{inverse} we obtain \eqref{repreEN}.
The second formula follows from Lemma \ref{inverse} and the fact that, if $X_t=N_t-E(N_t)$,
representation \eqref{repreEN} entails 
$$ X_t= M_t + \int_0^t \PHI(t-s) X_s \,ds.$$
\end{proof}
\section{Proof of Theorem \ref{LG}} \label{proofLG}
\begin{lemma} \label{EN}
Let $p\in[0,1)$ and assume that $\int_0^\infty t^p \PHI(t) \, dt <\infty$ component\-wise. Then
\begin{enumerate}
\item If $p<1$, we have
$$ T^p\Bigpar{T^{-1}E(N_{Tv}) - v(\mathbf{Id-K})^{-1} \mu  } 
\rightarrow 0\;\;\text{as}\;\;T\rightarrow \infty$$
 uniformly in $v\in[0,1]$.
\item If $p=1$, we have
\begin{align*}
& T \Bigpar{ \frac{1}{T} E(N_{T}) - (\mathbf{Id-K})^{-1} \mu  } \\
\rightarrow & -(\mathbf{Id-K})^{-1}\big( \int_0^\infty t\PHI(t)\,dt  \big)(\mathbf{Id-K})^{-1} \mu\;\;\text{as}\;\;T \rightarrow \infty.
\end{align*}
%as $T \rightarrow \infty$
\end{enumerate}
\end{lemma}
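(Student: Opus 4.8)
The starting point is the closed form $E(N_t)=t\mu+\bigpar{\int_0^t\PSI(t-s)s\,ds}\mu$ supplied by Lemma~\ref{repre}. The change of variable $u=t-s$ turns the integral into $t\int_0^t\PSI(u)\,du-\int_0^t u\PSI(u)\,du$, and since $\int_0^\infty\PSI(u)\,du=\sum_{n\ge1}\mathbf K^n=(\mathbf{Id-K})^{-1}-\mathbf{Id}$ (recall $\int_0^\infty\PHI_n=\mathbf K^n$), we get $\mu+\bigpar{\int_0^\infty\PSI}\mu=(\mathbf{Id-K})^{-1}\mu$ and hence, for $t>0$,
$$\tfrac1tE(N_t)-(\mathbf{Id-K})^{-1}\mu=-\Bigpar{\int_t^\infty\PSI(u)\,du}\mu-\tfrac1t\Bigpar{\int_0^t u\PSI(u)\,du}\mu .$$
Taking $t=Tv$ and multiplying by $v$ yields the working identity
$$T^{-1}E(N_{Tv})-v(\mathbf{Id-K})^{-1}\mu=-v\Bigpar{\int_{Tv}^\infty\PSI(u)\,du}\mu-\tfrac1T\Bigpar{\int_0^{Tv}u\PSI(u)\,du}\mu ,$$
so everything reduces to controlling the tail $\int_R^\infty u^p\PSI(u)\,du$ and the truncated moments $\int_0^R u\PSI(u)\,du$ of $\PSI$.

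The first step is to transfer the moment hypothesis from $\PHI$ to $\PSI$: if $\int_0^\infty t^p\PHI(t)\,dt<\infty$ componentwise with $p\in[0,1]$, then $\int_0^\infty t^p\PSI(t)\,dt<\infty$ componentwise. Writing the recursion $\PHI_{n+1}(t)=\int_0^t\PHI(t-s)\PHI_n(s)\,ds$ in the variables $(u,s)=(t-s,s)$ and using the elementary bound $(u+s)^p\le u^p+s^p$, one obtains componentwise $\int_0^\infty t^p\PHI_{n+1}(t)\,dt\le\bigpar{\int_0^\infty u^p\PHI(u)\,du}\mathbf K^n+\mathbf K\int_0^\infty s^p\PHI_n(s)\,ds$; iterating and summing over $n$ (all entries nonnegative, so the rearrangements are justified by Tonelli) gives $\int_0^\infty t^p\PSI(t)\,dt\le(\mathbf{Id-K})^{-1}\bigpar{\int_0^\infty t^p\PHI(t)\,dt}(\mathbf{Id-K})^{-1}$, which is finite because $\rho(\mathbf K)<1$. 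All estimates here must be kept componentwise because $\PHI$ and $\PHI_n$ do not commute.

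For part~(1), $p<1$: in the first term of the working identity I bound $\int_{Tv}^\infty\PSI(u)\,du\le(Tv)^{-p}\int_{Tv}^\infty u^p\PSI(u)\,du$, so that $T^pv\int_{Tv}^\infty\PSI\le v^{1-p}\int_{Tv}^\infty u^p\PSI\le\int_{Tv}^\infty u^p\PSI$ for $v\in(0,1]$. Given $\ep>0$, fix $R$ with $\int_R^\infty u^p\PSI<\ep$ componentwise; for $v\ge R/T$ this tail bound applies, while for $v<R/T$ the crude estimate $T^pv\int_{Tv}^\infty\PSI\le RT^{p-1}\int_0^\infty\PSI$ goes to $0$ as $T\to\infty$ (this is exactly where $p<1$ is used). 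In the second term I split $\int_0^{Tv}u\PSI=\int_0^{R}u\PSI+\int_R^{Tv}u\PSI$ with $\int_0^R u\PSI\le R^{1-p}\int_0^\infty u^p\PSI$ and $\int_R^{Tv}u\PSI\le(Tv)^{1-p}\int_R^\infty u^p\PSI$, so that $T^{p-1}\int_0^{Tv}u\PSI\le T^{p-1}R^{1-p}\int_0^\infty u^p\PSI+v^{1-p}\ep$; fixing $R$ first and then letting $T\to\infty$ makes this $\le2\ep$ uniformly in $v\in[0,1]$. For part~(2), $p=1$: now $\int_0^\infty u\PSI(u)\,du<\infty$ by the first step, hence $\int_0^{T}u\PSI\to\int_0^\infty u\PSI$ and $T\int_T^\infty\PSI\le\int_T^\infty u\PSI\to0$, which gives $T\bigpar{T^{-1}E(N_T)-(\mathbf{Id-K})^{-1}\mu}\to-\bigpar{\int_0^\infty u\PSI(u)\,du}\mu$; the same recursion, now with equalities, yields $\int_0^\infty u\PHI_n=\sum_{j=0}^{n-1}\mathbf K^j\bigpar{\int_0^\infty t\PHI(t)\,dt}\mathbf K^{n-1-j}$, and summing over $n$ (Tonelli again) gives the exact identity $\int_0^\infty u\PSI(u)\,du=(\mathbf{Id-K})^{-1}\bigpar{\int_0^\infty t\PHI(t)\,dt}(\mathbf{Id-K})^{-1}$, which is the announced limit.

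The main obstacle is the uniformity in $v\in[0,1]$ in part~(1): the tail $\int_{Tv}^\infty\PSI$ is not uniformly small as $v\downarrow0$ and must be absorbed by the prefactor $T^pv$ in the regime $v<R/T$, which is possible precisely because $p<1$. Everything else — the transfer of the moment condition to $\PSI$, and the explicit evaluation of $\int_0^\infty u\PSI$ when $p=1$ — is routine once one works componentwise and uses that $\rho(\mathbf K)<1$.
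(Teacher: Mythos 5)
Your proof is correct and follows essentially the same route as the paper: the identity $v(\mathbf{Id-K})^{-1}\mu - T^{-1}E(N_{Tv}) = \bigpar{v\int_{Tv}^\infty \PSI + T^{-1}\int_0^{Tv} t\,\PSI(t)\,dt}\mu$ combined with the componentwise transfer $\int_0^\infty t^p\PSI(t)\,dt \le (\mathbf{Id-K})^{-1}\bigpar{\int_0^\infty t^p\PHI(t)\,dt}(\mathbf{Id-K})^{-1}$ (with equality for $p=1$) via the same recursion on $\PHI_n$. The only cosmetic difference is that you handle $T^{p-1}\int_0^{Tv}t\,\PSI(t)\,dt\to 0$ by truncating at a fixed level $R$ rather than by the paper's integration-by-parts/Ces\`aro argument, and you spell out the uniformity in $v$ and the $p=1$ limit a bit more explicitly; both are fine.
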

\begin{proof}
Let $p\in[0,1]$ and assume that $\int_0^\infty t^p \PHI(t) \, dt <\infty$ componentwise.
We first prove that $\int_0^\infty t^p \PSI(t) \, dt <\infty$ componentwise. For $n\ge 1$, setting $\mathbf{A}_n=\int_0^\infty t^p \PHI_{n}(t)\,dt$, we can write
\begin{align*}
\mathbf{A}_{n+1} &= \int_0^\infty t^p \Bigpar{\int_0^t \PHI(t-s)\PHI_n(s)\,ds}dt\\
&=\int_0^\infty \Bigpar{\int_0^\infty (t+s)^p \PHI(t) \,dt} \PHI_n(s)\,ds\\
&\le \int_0^\infty t^p \PHI(t)\,dt \,\mathbf{K}^n + \mathbf{K}  \int_0^\infty s^p \PHI_{n}(s)\,ds
\quad \text{ with equality if $p=1$,}\\
&= \mathbf{A}_1 \mathbf{K}^n + \mathbf{K A}_n.
\end{align*}
Therefore for all integer $N$,
$$ \sum_{n=1}^N \mathbf{A}_n \le \mathbf{A}_1 + \mathbf{A}_1 \sum_{n=1}^{N-1} \mathbf{K}^n
+ \mathbf{K}\sum_{n=1}^{N-1}\mathbf{A}_n,$$
$$ (\mathbf{Id-K}) \sum_{n=1}^{N-1}\mathbf{A}_n + \mathbf{A}_N \le \mathbf{A}_1 + \mathbf{A}_1 \sum_{n=1}^{N-1} \mathbf{K}^n$$
and 
$$ \sum_{n=1}^{N-1}\mathbf{A}_n \le (\mathbf{Id-K})^{-1} (\mathbf{A}_1 + \mathbf{A}_1 \sum_{n=1}^{N-1} \mathbf{K}^n).$$
Letting $N\to\infty$ we derive
$$\int_0^\infty t^p \PSI(t)\,dt= \sum_{n\ge 1} \mathbf{A}_n \le (\mathbf{Id-K})^{-1} \mathbf{A}_1 (\mathbf{Id-K})^{-1}$$
with equality if $p=1$. From \eqref{repreEN} it follows that for all $v\in[0,1]$:
\begin{equation} \label{difference}
v(\mathbf{Id-K})^{-1} \mu - \frac{1}{T} E(N_{Tv})=
\Bigpar{ v\int_{Tv}^\infty \PSI(t) dt + \frac{1}{T} \int_0^{Tv} t\, \PSI(t) dt} \mu
\end{equation}
Since $t^p \PSI(t)$ is integrable, we have
$$ T^p \int_{Tv}^\infty \PSI(t) dt \le v^{1-p} \int_{Tv}^\infty t^p \PSI(t)\,dt \to 0\;\;\text{as}\;\;T\rightarrow \infty$$
and this convergence is uniform in $v\in[0,1]$ in the case $p<1$.
It remains to prove that if $p<1$, we have 
$$\frac{1}{T^{1-p}} \int_0^T t\PSI(t)  dt \to 0\;\;\text{as}\;\;T\rightarrow \infty.$$
With $\mathbf{G}(t)=\int_0^t s^p \PSI(s) ds$, integrating by part, we obtain
$$ T^{1-p}\mathbf{G}(T)=\int_0^T t \PSI(t)\,dt + (1-p) \int_0^T t^{-p} \mathbf{G}(t) \,dt$$
and
$$ \frac{1}{T^{1-p}} \int_0^T \PSI(t) t dt = \mathbf{G}(T) - \frac{1-p}{T^{1-p}} \int_0^T t^{-p} \mathbf{G}(t) \,dt.$$
Since $\mathbf{G}(t)$ is convergent as $t\to\infty$ we finally derive that the right-hand-side in the above equality converges to $0$ as $T \rightarrow \infty$.
\end{proof}

Denote by $\norm{\cdot}$ the Euclidean norm either on $\R^d$ or on the set of $d\times d$ matrices.
\begin{lemma} \label{majM}
There exists a constant $C_{\mu,\PHI}$ such that for all $t,\De\ge 0$:
$$ E\bigpar{ {\sup_{t\le s\le t+\De}\norm{M_{s}-M_t}^2 } } \le C_{\mu,\PHI}\, \De.$$
\end{lemma}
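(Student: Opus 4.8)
The plan is to control the martingale increment $M_s - M_t$ by splitting it into its ``pure jump'' part $N_s - N_t$ and its ``compensator'' part $\int_t^s \la_r\,dr$, and then to bound each in $L^2$ using the already-established estimate \eqref{majEN} on $E(N_S)$ from Lemma \ref{maj}. First I would write, for $t\le s\le t+\De$,
\[
M_s - M_t = (N_s - N_t) - \int_t^s \la_r\,dr,
\]
so that $\sup_{t\le s\le t+\De}\norm{M_s-M_t} \le (N_{t+\De}-N_t) + \int_t^{t+\De}\la_r\,dr$ componentwise, the right-hand side being monotone in $s$; hence it suffices to bound the second moments of $N_{t+\De}-N_t$ and of $\int_t^{t+\De}\la_r\,dr$ by a constant times $\De$.

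Second, I would handle $\int_t^{t+\De}\la_r\,dr$. Using \eqref{Hawkes}, $\la_{i,r} = \mu_i + \int_{(0,r)}\sum_j \phi_{ij}(r-u)\,dN_{j,u}$; taking expectations and applying Fubini together with the bound \eqref{majEN}, one gets $E(\la_{i,r}) \le \mu_i + \sum_j \int_0^r \phi_{ij}(r-u)\,du\,\big((\mathbf{Id-K})^{-1}\mu\big)_j \le C$ for a constant depending only on $\mu$ and $\PHI$ (here $\rho(\mathbf{K})<1$ is what keeps things finite). So $E\big(\int_t^{t+\De}\la_r\,dr\big) \le C\De$. For the \emph{second} moment of $\int_t^{t+\De}\la_r\,dr$, I would instead first deal with $E\big((N_{t+\De,i}-N_{t,i})^2\big)$: write $N_{t+\De}-N_t = (M_{t+\De}-M_t) + \int_t^{t+\De}\la_r\,dr$, and use that the predictable quadratic variation of the $i$-th component of $M$ is $\int_0^\cdot \la_{i,r}\,dr$ (a counting process compensated by its intensity), so that $E\big((M_{t+\De,i}-M_{t,i})^2\big) = E\big(\int_t^{t+\De}\la_{i,r}\,dr\big) \le C\De$. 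This is the step that requires a little care, since a priori one needs $M$ to be square-integrable on $[t,t+\De]$; this follows because $E(N_{t+\De}) < \infty$ by Theorem \ref{LG} (or directly from \eqref{majEN}), and a standard localisation/monotone-convergence argument along the jump times $T_p$ upgrades the identity $E\big((M_{\cdot\wedge T_p})^2\big) = E\big(\int_0^{\cdot\wedge T_p}\la\big)$ to the limit.

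Third, I would assemble the pieces: $E\big((N_{t+\De,i}-N_{t,i})^2\big) \le 2E\big((M_{t+\De,i}-M_{t,i})^2\big) + 2E\big((\int_t^{t+\De}\la_{i,r}\,dr)^2\big)$, and for the last term use that $\int_t^{t+\De}\la_r\,dr$ is, after the estimate above, dominated in $L^2$ once one controls $E\big(N_{t+\De}^2\big)$ — which circularly is what we are bounding, so the cleaner route is: bound $\big(\int_t^{t+\De}\la_{i,r}\,dr\big)^2 \le \De\int_t^{t+\De}\la_{i,r}^2\,dr$ by Cauchy--Schwarz only if we have an $L^2$ bound on $\la$, which we may not; so instead I would bound $\int_t^{t+\De}\la_{i,r}\,dr \le N_{\infty}$-type quantities is not available either. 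The robust choice is to go back to \eqref{compensateurN}: apply Lemma \ref{maj} with the stopping time $S = t+\De$ is not an increment, so instead I would prove directly a bound of the form $E\big((N_{(t+\De)}-N_t)^2\big)\le C\De + C\De^2$ by writing the increment's intensity-based variance formula and plugging \eqref{majEN}; since $\De$ ranges over $\R_+$, the $C\De^2$ term is not dominated by $C\De$ globally, so the statement as phrased is implicitly for the regime where this is acceptable, or the constant absorbs a $(1+\De)$ factor — I would state the bound as $E(\sup\norm{M_s-M_t}^2)\le C_{\mu,\PHI}\De$ valid since in all later uses $\De$ stays bounded. The main obstacle is therefore the bookkeeping around square-integrability of $M$ and making the constant genuinely linear in $\De$: the clean way is to use that $\langle M_i\rangle_s - \langle M_i\rangle_t = \int_t^s\la_{i,r}\,dr$ has expectation $\le C\De$, Doob's $L^2$ inequality gives $E(\sup_{t\le s\le t+\De}\norm{M_s-M_t}^2)\le 4\sum_i E\big(\int_t^{t+\De}\la_{i,r}\,dr\big)\le C_{\mu,\PHI}\De$, and this is exactly linear in $\De$ with no quadratic correction.
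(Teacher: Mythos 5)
Your final argument is, in substance, the paper's own proof: Doob's $L^2$ inequality reduces the lemma to bounding $E\bigpar{(M_{i,t+\De}-M_{i,t})^2}$, which equals the expected increment of the quadratic variation of $M_i$ --- the paper uses $[M_i,M_i]=N_i$, you use $\brac{M_i}=\int_0^\cdot\la_{i,r}\,dr$, and the two have the same expectation --- and this expectation is linear in $\De$. The earlier detours (bounding $\sup_{t\le s\le t+\De}\norm{M_s-M_t}$ by the $N$-increment plus the compensator increment, then worrying about second moments of $\int_t^{t+\De}\la_r\,dr$ and about a $\De^2$ term) are abandoned by you and are indeed not needed.

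Two points do need repair. First, the key estimate $E(\la_{i,r})\le C_{\mu,\PHI}$ (equivalently $E(N_{t+\De}-N_t)\le \De\,(\mathbf{Id-K})^{-1}\mu$ componentwise) does \emph{not} follow from \eqref{majEN} plus Fubini as you claim: \eqref{majEN} only controls the cumulative mean $E(N_u)\le u\,(\mathbf{Id-K})^{-1}\mu$, and this is not enough to bound the Stieltjes integral $\int_{(0,r)}\phi_{ij}(r-u)\,dE(N_{j,u})$ when $\phi_{ij}$ is merely integrable (it may be unbounded, and the cumulative bound does not prevent the measure $dE(N_{j,\cdot})$ from concentrating mass of order $r$ near $u=r$). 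What is needed is the density bound $\tfrac{d}{du}E(N_u)=\mu+\bigpar{\int_0^u\PSI(s)\,ds}\mu\le(\mathbf{Id-K})^{-1}\mu$, i.e. formula \eqref{repreEN} of Lemma \ref{repre} together with $\int_0^\infty\PSI(t)\,dt=(\mathbf{Id-K})^{-1}-\mathbf{Id}$; this is exactly how the paper obtains $E(N_{t+\De}-N_t)\le\De\,(\mathbf{Id-K})^{-1}\mu$. Second, drop the hedge that the constant might absorb a factor $(1+\De)$ because ``in all later uses $\De$ stays bounded'': it does not --- Lemma \ref{majM} is applied with $\De=T\to\infty$ in the proof of Theorem \ref{LG}, and with possibly unbounded $\De_T$ in Theorem \ref{SP} --- so the genuinely linear bound, which your final Doob-plus-compensator computation does deliver, is essential.
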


\begin{proof}
Doob's inequality yields
$$ E\bigpar{ {\sup_{t\le s\le t+\De}\norm{M_{s}-M_t}^2 }} \le 4\sum_{i=1}^d E\bigpar{(M_{i,t+\De}-M_{i,t})^2}.$$
For each $i\in\set{1,\dots,d}$, the quadratic variation of the martingale $(M_{i,t})_{t\ge 0}$
is
$$ \bigcro{M_i,M_i}_t=\sum_{s\le t} (M_{i,s}-M_{i,s-})^2= N_{i,t}.$$
Thus we have $$E( (M_{i,t+\De}-M_{i,t})^2 ) = E(N_{i,t+\De}-N_{i,t}).$$
Besides, in view of Lemma \ref{repre} and the fact that $\int_0^\infty \PSI(t) dt = (\mathbf{Id-K})^{-1}-\mathbf{Id}$, we obtain
$$ E(N_{t+\De}-N_{t}) \le \De (\mathbf{Id-K})^{-1} \mu \quad\text{componentwise.}$$
\end{proof}

\begin{proof}[Completion of proof of Theorem \ref{LG}]
Lemma \ref{EN} with $p=0$ implies that it is enough to prove the following convergence
\begin{equation} \label{cv1}
T^{-1}\sup_{v\in[0,1]}\big\| N_{Tv} - E(N_{Tv}) \big\| \rightarrow 0\;\;\text{as}\;\;T\rightarrow \infty
\end{equation}
almost surely and in $L^2(P)$. Thanks to \eqref{repreN} of Lemma \ref{repre}, we have
\begin{align*}
\sup_{v\in[0,1]} \norm{ N_{Tv}- E(N_{Tv})} &\le \bigpar{1+\int_0^T \norm{\PSI(t)} dt} \sup_{t\le T} \norm{M_t},\\
&\le C_{\PHI}\, \sup_{t\le T} \norm{M_t}
\end{align*}
since $\PSI$ is integrable. Moreover 
$$E\bigpar{\sup_{t\le T} \norm{M_t}^2} \le C_{\mu,\PHI} T$$ 
by Lemma \ref{majM}, therefore convergence \eqref{cv1} holds in $L^2(P)$. In order to prove the almost-sure convergence, it is enough to show that
$$ T^{-1}\sup_{v\in[0,1]} \bignorm{M_{Tv}} \to 0\;\;\text{as}\;\;T\rightarrow\infty\;\;\quad\text{almost-surely.}$$
Let 
$$Z_t=(Z_{1,t},\dots,Z_{d,t})=\int_{(0,t]} \frac{1}{s+1} dM_s.$$
The quadratic variation of the martingale $Z_{i}$ satisfies 
$$[Z_i,Z_i]_t=\sum_{0<s\le t} (Z_{i,s}-Z_{i,s-})^2=\int_{(0,t]} \frac{1}{(s+1)^2} dN_{i,s}$$ and
moreover, using integration by part and \eqref{repreEN}, we have
$$ E\bigpar{\int_{(0,\infty)} \frac{1}{(s+1)^2} dN_s }
=2 E\bigpar{ \int_{(0,\infty)} \frac{N_s}{(1+s)^3} ds} < \infty.$$
Therefore $\lim_{t\to\infty} Z_t$ exists and is finite almost surely. It follows that
$$ \frac{1}{t+1} M_t= Z_t - \frac{1}{t+1}\int_0^t Z_s\, ds \rightarrow 0 \;\;\text{as}\;\;T\rightarrow\infty\;\;\text{almost surely.}$$
We deduce that almost surely, for all family $v_T\in[0,1]$, $T>0$, such that $Tv_T\to\infty$ the convergence
$ M_{Tv_T}/T \to 0$ holds. Moreover, we have $M_{Tv_T}/T\to 0$ if $\sup_T Tv_T<\infty$. In other words
$T^{-1}M_{Tv} \to 0$ uniformly uniformly in $v\in[0,1]$, almost-surely. The proof of Theorem \ref{LG} is complete.
\end{proof}

\section{Proof of Theorem \ref{TCL}} \label{proofTCL}
Let $W=(W_1,\ldots, W_d)$ be a standard $d$-dimensional Brownian motion. For $i=1,\dots,d$, put $\si_i = (\Sigma_{ii})^{1/2}$. 
\begin{lemma} \label{TCLM}
The martingales $M^{(T)}:=(T^{-1/2} M_{Tv})_{v\in[0,1]}$ converge in law for the Skorokod topology
to $(\si_1 W_1,\dots,\si_d W_d)$.
\end{lemma}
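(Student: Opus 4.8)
The plan is to apply the martingale functional central limit theorem (the classical result of Jacod--Shiryaev, Theorem VIII.3.11 in \cite{JJetAS}) to the sequence of $d$-dimensional martingales $M^{(T)}_v = T^{-1/2} M_{Tv}$, $v \in [0,1]$. This requires verifying two things: (i) convergence of the predictable quadratic covariation processes $\langle M^{(T),i}, M^{(T),j}\rangle_v$ to the deterministic limit $\sigma_i \sigma_j \delta_{ij}\, v$, and (ii) a conditional Lindeberg-type negligibility condition on the jumps, for which the simplest sufficient condition here is that the maximal jump size of $M^{(T)}$ tends to zero in probability (or that $E[\sup_{v}\|\Delta M^{(T)}_v\|^2]\to 0$).

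First I would compute the bracket. Since $M_i$ is the compensated counting process $N_i - \int_0^\cdot \lambda_{i,s}\,ds$, the jumps of distinct components never coincide (the $N_i$ have no common jumps), so $\langle M_i, M_j\rangle_t = 0$ for $i\neq j$, while $\langle M_i, M_i\rangle_t = \int_0^t \lambda_{i,s}\,ds$. Hence
\begin{equation*}
\langle M^{(T),i}, M^{(T),i}\rangle_v = \frac{1}{T}\int_0^{Tv} \lambda_{i,s}\,ds = \frac{1}{T}\, E(N_{i,Tv}) + \frac{1}{T}\bigpar{N_{i,Tv} - E(N_{i,Tv})} - \frac{1}{T}M_{i,Tv}.
\end{equation*}
By Theorem \ref{LG}, $T^{-1}(N_{Tv} - E(N_{Tv}))\to 0$ uniformly in $v$ in $L^2(P)$ and a.s.; by Lemma \ref{EN} with $p=0$, $T^{-1}E(N_{i,Tv}) \to v\,((\mathbf{Id-K})^{-1}\mu)_i = v\,\Sigma_{ii} = v\,\sigma_i^2$ uniformly in $v$; and $T^{-1}M_{i,Tv}\to 0$ by Lemma \ref{majM} (or by the argument at the end of the proof of Theorem \ref{LG}). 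Therefore $\langle M^{(T),i},M^{(T),i}\rangle_v \to v\sigma_i^2$ uniformly in $v$, in probability, which is exactly condition (i) with the diagonal limiting covariance matrix $\mathrm{diag}(\sigma_1^2,\dots,\sigma_d^2)$.

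For condition (ii), the jumps of $M^{(T)}$ are of size $T^{-1/2}$ (each jump of some $N_i$ contributes a jump of $T^{-1/2}$ to one coordinate of $M^{(T)}$), so $\sup_{v\in[0,1]}\|\Delta M^{(T)}_v\| \le d^{1/2} T^{-1/2} \to 0$ deterministically; this trivially gives the Lindeberg condition, and also shows the limit is continuous. Since the limiting bracket is deterministic, continuous, and diagonal, the martingale CLT yields convergence in law for the Skorokhod topology of $M^{(T)}$ to a continuous Gaussian martingale with that bracket, namely $(\sigma_1 W_1,\dots,\sigma_d W_d)$ where $W$ is a standard $d$-dimensional Brownian motion. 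I do not expect a genuine obstacle here; the only point requiring a little care is assembling the uniform-in-$v$ convergence of the bracket from Theorem \ref{LG} and Lemma \ref{EN} rather than just pointwise convergence, but those lemmas are stated in exactly the uniform form needed.
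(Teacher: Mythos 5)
Your proof is correct and follows essentially the same route as the paper: both apply the martingale FCLT of Jacod--Shiryaev (Theorem VIII.3.11) with uniformly small jumps, the vanishing cross-brackets coming from the absence of common jumps, and the limiting diagonal bracket identified via Theorem \ref{LG}. The only cosmetic difference is that you verify convergence of the predictable bracket $T^{-1}\int_0^{Tv}\lambda_{i,s}\,ds$, which costs you the extra (easily handled) term $T^{-1}M_{i,Tv}\to 0$, whereas the paper works with the optional bracket $[M^{(T)}_i,M^{(T)}_i]_v=T^{-1}N_{i,Tv}$, to which Theorem \ref{LG} applies directly.
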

\begin{proof}
According to Theorem VIII-3.11 of \cite{JJetAS}, since the martingales $M^{(T)}$ have uniformly bounded jumps,
a necessary and sufficient condition to obtain the lemma is: for all $v\in[0,1]$, for all $1\le i<j\le d$
$$ [M^{(T)}_i, M^{(T)}_i]_v \to \si_i^2 v, \quad [M^{(T)}_i, M^{(T)}_j]_v \to 0,\;\;\text{as}\;\;T \rightarrow \infty\;\;\text{in probability.}$$
We have
\begin{equation*}
 [M^{(T)}_i, M^{(T)}_i]_v= \frac{1}{T} N_{i,Tv} \to \si_i^2 v \quad\text{in $L^2(P)$ by Theorem \ref{LG}}
 \end{equation*}
 and
 \begin{equation*}
[M^{(T)}_i, M^{(T)}_j]_v= 0
\end{equation*}
since the processes $N_i$ for $1\le i\le d$, have no common jump by construction.
 \end{proof}

\begin{proof}[Completion of proof of Theorem \ref{TCL}]
Set 
$$ X^{(T)}_v= T^{-1/2}\bigpar{ N_{Tv}-E(N_{Tv})}.$$
In view of Lemma \ref{TCLM}, it is enough to prove that
$$ \sup_{v\in[0,1]} \bignorm{ X^{(T)}_v - (\mathbf{Id-K})^{-1} M^{(T)}_v } \to 0\;\;\text{as}\;\;T\rightarrow\infty\;\;
\text{in probability.}$$
By Lemma \ref{repre}, we have
$$ X^{(T)}_v= M^{(T)}_v + \int_0^v T\PSI(Tu) M^{(T)}_{v-u}\,du,$$
hence we need to prove that
\begin{equation} \label{cvTCL}
\sup_{v\in[0,1]} \bignorm{ \int_0^v T\PSI(Tu) M^{(T)}_{v-u}\,du - \bigpar{\int_0^\infty\! \PSI(t)dt} M^{(T)}_v}
\to 0\;\;\text{as}\;\;T\rightarrow\infty
\end{equation}
in probability. We plan to use the fact that $\PSI$ is integrable and the $C$-tightness of the family $(M^{(T)})_{T>0}$. The tightness  is a consequence of Lemma \ref{TCLM} and reads:
%On the one hand
%$$\bignorm{\int_0^v T\PSI(Tu)du\,  M^{(T)}_v- \int_0^v T\PSI(Tu) M^{(T)}_{v-u}\,du}
%\le \int_0^1 T \norm{\PSI}(Tu) du \bignorm{}$$
%Since the family $(M^{(T)})_{T>0}$ is $C$-tight one has
\begin{equation} \label{CtightnessM}
\forall \ep>0 \quad \limsup_T P\Bigpar{ \sup_{\abs{u-u'}\le \eta} \bignorm{M^{(T)}_u-M^{(T)}_{u'}} >\ep}\rightarrow 0\;\;\text{as}\;\;\eta \rightarrow 0. 
\end{equation}
using also that $M^{(T)}_0=0$.
For $\eta>0$ and $v\in[0,1]$ we have
\begin{align*}
\bignorm{\int_{v\land\eta}^{v} T\PSI(Tu) M^{(T)}_{v-u}du} &\le \sup_{0\le u\le 1} \norm{M^{(T)}_u}\ \int_\eta^1 T\norm{\PSI(Tu)}du \\
&\le \sup_{0\le u\le 1} \norm{M^{(T)}_u}\ \int_{T\eta}^\infty\norm{\PSI(t)}dt \rightarrow 0
\end{align*}
as $T\rightarrow \infty$ in probability, since $\sup_{0\le u\le 1} \|M^{(T)}_u\|$ is bounded in probability and $\int_{T\eta}^\infty\norm{\PSI(t)}dt\to 0$ as $T \rightarrow \infty$. Moreover
$$ \bignorm{\int_0^{v\land\eta} T\PSI(Tu) \bigpar{M^{(T)}_v-M^{(T)}_{v-u}}du} \le \sup_{\abs{u-u'}\le \eta} \bignorm{M^{(T)}_u-M^{(T)}_{u'}}\ \int_0^\infty \norm{\PSI(t)}dt,$$
therefore, in order to prove \eqref{cvTCL} it suffices to show that for all $\eta>0$, the convergence
$$\sup_{v\in[0,1]} \bignorm{ \bigpar{\int_0^\infty \PSI(t)dt - \int_0^{v\land\eta} T\PSI(Tu) du}M^{(T)}_v }\rightarrow 0\;\;\text{as}\;\;T\rightarrow \infty$$
holds in probability. It readily follows from \eqref{CtightnessM} and the upper-bound 
\begin{align*}
& \bignorm{\Bigpar{\int_0^\infty \PSI(t)dt - \int_0^{v\land\eta} T\PSI(Tu) du}M^{(T)}_v} \\
\le & 
\begin{cases}
\displaystyle \int_{T\de}^\infty \norm{\PSI(t)} dt\  \sup_{u} \norm{M^{(T)}_u} & \text{if $v>\de$}\\ \\
\displaystyle \int_0^\infty \norm{\PSI(t)}dt\ \sup_{u\le \de} \norm{M^{(T)}_u}  & \text{if $v\le \de,$}
\end{cases}
\end{align*}
with $0<\de<\eta$.
\end{proof}

\begin{proof}[Proof of Corollary \ref{TCLbis}] 
By \eqref{uibis}, Lemma \ref{EN} with $p=1/2$ yields
$$ T^{1/2} \bigpar{ T^{-1}E(N_{Tv})- v (\mathbf{Id-K})^{-1}\mu } \to 0\;\;\text{as}\;\;T\rightarrow\infty$$
uniformly in $v\in[0,1]$.
Moreover, by \eqref{sta}, Theorem \ref{TCL} yields
$$T^{1/2} \bigpar{ T^{-1}N_{Tv}-T^{-1}E(N_{Tv}) }\rightarrow (\mathbf{Id-K})^{-1} \mathbf{\Sigma}^{1/2} W$$
in distribution as $T\rightarrow \infty$ and the result follows.
\end{proof}
\section{Proof of Theorem \ref{SP}} \label{proofSP}
Set
\begin{equation} \label{defY}
Y_t=\int_0^t \PSI(t-s) M_s \, ds
\end{equation}
in order that $X=M+Y$, see Lemma \ref{repre}. For all $0\le \ep\le \eta\le 1$, for all integer $1\le k_0\le T/\De$, define
$${\mathcal D}_{k_0,\varepsilon,\eta}(X)_{T,\Delta}=\frac{1}{T}\sum_{k=k_0}^{\PE{T/\De}} \bigpar{X_{(k-1)\De+\eta\De}-X_{(k-1)\De+\ep\De}}.$$
\begin{lemma} \label{VN}
There exists a function $T\leadsto \xi_{\mu,\PHI}(T)$ such that
$\xi_{\mu,\PHI}(T)\to0 $ as $T\to\infty$ and 
such that for all $0\le \ep\le \eta\le 1$ and all integer $1\le k_0\le T/\De$, we have
$$
E\big(\big\|(1-\frac{k_0\De}{T})(\eta-\ep)(\mathbf{Id-K})^{-1}\mu 
-{\mathcal D}_{k_0,\varepsilon,\eta}(N)_{T,\Delta}\big\|^2\big)  \le \xi_{\mu,\PHI}(T).$$
\end{lemma}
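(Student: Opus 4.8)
The plan is to reduce the telescoping increment ${\mathcal D}_{k_0,\ep,\eta}(N)_{T,\De}$ to the mean $E(N_\cdot)$ plus a martingale remainder, control the mean part by Lemma \ref{EN} with $p=0$, and control the martingale part by Lemma \ref{majM}. First I would write $N_t = E(N_t) + X_t$ with $X=M+Y$ as in Lemma \ref{repre}, so that
$$
{\mathcal D}_{k_0,\ep,\eta}(N)_{T,\De} = \frac1T\sum_{k=k_0}^{\PE{T/\De}}\bigpar{E(N_{(k-1)\De+\eta\De})-E(N_{(k-1)\De+\ep\De})} + {\mathcal D}_{k_0,\ep,\eta}(X)_{T,\De}.
$$
For the deterministic piece, I would use the formula $E(N_t)=t\mu+(\int_0^t\PSI(t-s)s\,ds)\mu$ from Lemma \ref{repre}, or more conveniently the consequence of Lemma \ref{EN}(i) with $p=0$ that $T^{-1}E(N_{Tv})\to v(\mathbf{Id-K})^{-1}\mu$ uniformly in $v\in[0,1]$: each summand contributes $E(N_{(k-1)\De+\eta\De})-E(N_{(k-1)\De+\ep\De})$, which is $(\eta-\ep)\De\cdot(\mathbf{Id-K})^{-1}\mu$ up to a term tending to $0$ uniformly, and summing $\PE{T/\De}-k_0+1$ such terms and dividing by $T$ produces $(1-\tfrac{k_0\De}{T})(\eta-\ep)(\mathbf{Id-K})^{-1}\mu$ plus an error that is $o(1)$ uniformly in $k_0,\ep,\eta$. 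The slightly delicate point is that the error must be uniform in $\ep,\eta$ and in $k_0$; since the convergence in Lemma \ref{EN} is uniform in $v\in[0,1]$, the per-summand error is bounded by $\epsilon(T)$ with $\epsilon(T)\to 0$ free of all three parameters, and there are at most $T/\De$ summands each scaled by $\De/T$, so the total error is $O(\epsilon(T))$; I would also absorb the boundary discrepancy between $\PE{T/\De}\De$ and $T$, which is $O(\De/T)$ after normalisation.

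For the stochastic piece ${\mathcal D}_{k_0,\ep,\eta}(X)_{T,\De}$, I would bound its $L^2$ norm by splitting $X=M+Y$. The martingale part $\frac1T\sum_k (M_{(k-1)\De+\eta\De}-M_{(k-1)\De+\ep\De})$ can be estimated by expanding the square and using that increments of $M$ over disjoint intervals are orthogonal (together with the diagonal terms bounded via Lemma \ref{majM}); since the intervals $[(k-1)\De+\ep\De,(k-1)\De+\eta\De]$ are pairwise disjoint with total length at most $(\eta-\ep)\cdot\PE{T/\De}\De\le T$, the cross terms vanish in expectation and $E\|\cdot\|^2 \le T^{-2}\sum_k E\|M_{(k-1)\De+\eta\De}-M_{(k-1)\De+\ep\De}\|^2 \le T^{-2}\sum_k C_{\mu,\PHI}(\eta-\ep)\De \le C_{\mu,\PHI}T^{-1}$, which is $O(1/T)$. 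For the part involving $Y_t=\int_0^t\PSI(t-s)M_s\,ds$, I would use $\|Y_{(k-1)\De+\eta\De}-Y_{(k-1)\De+\ep\De}\|$ bounds through integrability of $\PSI$ and Lemma \ref{majM}: each such increment is controlled in $L^2$ by $\norm{\PSI}_{L^1}$ times $\sup_{t\le T}\norm{M_t}$ times a factor, and more carefully by writing the difference of the two convolution integrals and splitting the integral over the "overlapping" range versus the "new" range of length $(\eta-\ep)\De$. A cleaner route is to note $Y_{b}-Y_{a}=\int_0^a(\PSI(b-s)-\PSI(a-s))M_s\,ds+\int_a^b\PSI(b-s)M_s\,ds$ and bound the $L^2$ norm of the telescoped sum using Minkowski, the Fubini-type rearrangement already used in Lemma \ref{repre}, and $E\norm{M_s}^2\le C_{\mu,\PHI}s$; summing over $k$ and dividing by $T$, the integrability of $\PSI$ makes the total $o(1)$ uniformly in $k_0,\ep,\eta$. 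Collecting the three contributions, all error terms are bounded by a single function $\xi_{\mu,\PHI}(T)\to 0$, which establishes the claim.

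The main obstacle I anticipate is keeping all bounds \emph{uniform} in the three free parameters $k_0$, $\ep$, $\eta$ simultaneously — in particular making sure that the $Y$-part estimate does not deteriorate as $\eta-\ep\to 0$ or as the number of blocks grows like $T/\De$ while $\De$ itself may shrink with $T$ (recall $\De=\De_T$ is eventually a vanishing mesh). The safeguard is that every bound factors through $\int_0^\infty\norm{\PSI(t)}\,dt<\infty$ and through $\sum_k(\eta-\ep)\De\le T$, so the $\De$-dependence cancels against the $1/T$ normalisation and only the tail $\int_{T\eta}^\infty\norm{\PSI}$ and the renewal-type convergence $E(N_{Tv})/T\to v(\mathbf{Id-K})^{-1}\mu$ govern the rate; both are manifestly uniform, and the resulting $\xi_{\mu,\PHI}(T)$ depends only on $\mu$, $\PHI$, and $T$.
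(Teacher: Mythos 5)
The overall skeleton of your plan — split ${\mathcal D}_{k_0,\ep,\eta}(N)$ into its mean and the centred part $X=M+Y$, handle the $M$-part by orthogonality of martingale increments plus Lemma \ref{majM} — is the paper's skeleton, and your treatment of the pure $M$-part is correct. The genuine gap is in the deterministic part. Lemma \ref{EN} with $p=0$ only gives $\sup_{v\in[0,1]}\norm{T^{-1}E(N_{Tv})-v(\mathbf{Id-K})^{-1}\mu}\to0$, i.e.\ it controls the error $e(t)=t(\mathbf{Id-K})^{-1}\mu-E(N_t)$ globally at scale $o(T)$; it says nothing about the increment $e((k-1)\De+\eta\De)-e((k-1)\De+\ep\De)$ over a window of length $(\eta-\ep)\De$. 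Your claim that the per-summand error is bounded by some $\epsilon(T)\to0$ uniformly is false: from \eqref{difference} one has $e'(t)=\bigpar{\int_t^\infty\PSI(s)\,ds}\mu$, so the $k$-th error is of order $(\eta-\ep)\De\int_{(k-1)\De+\ep\De}^\infty\norm{\PSI(r)}\,dr$, which for small $k$ is of order $\De$ and can even diverge with $T$, since neither the lemma nor Theorem \ref{SP} forces $\De=\De_T$ to stay bounded (only $\De_T/T\to0$). Your bookkeeping is also off: each summand carries the factor $1/T$, not $\De/T$, and since the windows $[(k-1)\De+\ep\De,(k-1)\De+\eta\De]$ are not contiguous when $0<\ep<\eta<1$, no telescoping saves the day. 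The paper closes exactly this step by computing $E(N_{(k-1)\De+\eta\De})-E(N_{(k-1)\De+\ep\De})$ from the explicit formula of Lemma \ref{repre} and bounding the accumulated error by $\tfrac{1}{T}\int_0^\infty (r\land T)\,\norm{\PSI(r)}\,dr\to0$; some argument of this finer type (using the derivative $\int_t^\infty\PSI$ and integrability of $\PSI$) is indispensable and is missing from your proposal.

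There is a second soft spot in the $Y$-part. Your ``cleaner route'' bounds each increment $Y_{(k-1)\De+\eta\De}-Y_{(k-1)\De+\ep\De}$ separately via Minkowski and $E\norm{M_s}^2\le C_{\mu,\PHI}\,s$; this discards the martingale cancellation across blocks, leaves you with per-increment $L^2$ bounds of order $\sqrt{T}$ with no orthogonality, and summing $\sim T/\De$ of them and dividing by $T$ need not be $o(1)$ (already for fixed $\De$). The estimate that works — and is what the paper does — is to push the convolution outside the sum: writing $Y_t=\int_0^\infty\PSI(s)M_{t-s}\,ds$ with $M_t=0$ for $t\le0$, one gets
$${\mathcal D}_{k_0,\ep,\eta}(Y)_{T,\De}=\int_0^\infty\PSI(s)\,{\mathcal D}_{k_0,\ep,\eta}(M_{\cdot-s})_{T,\De}\,ds,$$
and then the discrete-martingale orthogonality argument gives $E\bigpar{\norm{{\mathcal D}_{k_0,\ep,\eta}(M_{\cdot-s})_{T,\De}}^2}\le C_{\mu,\PHI}T^{-1}$ uniformly in the shift $s$, after which integrability of $\PSI$ concludes. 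You gesture at this Fubini-type rearrangement, but as articulated your estimate routes through bounds that do not close; to repair the proof you should apply the shifted-martingale bound inside the $s$-integral rather than bounding $Y$-increments one by one.
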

\begin{proof}
First we prove that
\begin{equation} \label{aux}
 %E\Bigpar{ \Bignorm{ \frac{1}{T} \sum_{k=k_0}^{\PE{T/\De}} \bigpar{ X_{(k-1)\De_T+\eta\De}-X_{(k-1)\De+\ep\De}} }^2 } \le C_{\mu,\PHI}/T.
 \E\big(\big\|{\mathcal D}_{k_0,\varepsilon,\eta}(X)_{T,\Delta}\|^2\big) \leq C_{\mu,\PHI}T^{-1}
 \end{equation}
for some constant $C_{\mu,\PHI}$ that depends on $\mu$ and $\PHI$ only.
Using 
\begin{align*}
X_{(k-1)\De+\eta\De}-X_{(k-1)\De+\ep\De}&
=M_{(k-1)\De+\eta\De}-M_{(k-1)\De+\ep\De} \\
+\int_0^\infty ds\, &\PSI(s) \bigpar{M_{(k-1)\De+\eta\De-s}-M_{(k-1)\De+\ep\De-s}}
\end{align*}
and the fact that $\PSI$ is integrable, it suffices to prove that
%$$ E\Bigpar{ \Bignorm{ \frac{1}{T} \sum_{k=k_0}^{\PE{T/\De}} \bigpar{ M_{(k-1)\De_T+\eta\De-s}-M_{(k-1)\De+\ep\De-s}} }^2 } \le C_{\mu,\PHI}/T.$$
$$E\big(\big\|{\mathcal D}_{k_0,\varepsilon,\eta}(M_{\cdot-s})_{T,\Delta}\|^2\big) \leq C_{\mu,\PHI}T^{-1}.$$
Set
$$ S_n=\sum_{k=1}^n \bigpar{ M_{(k-1)\De+\eta\De-s}-M_{(k-1)\De+\ep\De-s}}.$$
Clearly $(S_n)_{n \geq 1}$ is a discrete martingale thus
$$ E\bigpar{ \bignorm{S_{\PE{T\De}}-S_{k_0-1}}^2 } 
= \sum_{k=k_0}^{\PE{T\De}}
E\bigpar{\bignorm{ M_{(k-1)\De+\eta\De-s}-M_{(k-1)\De+\ep\De-s}}^2}.$$
By Lemma \ref{majM} we have
$E\bigpar{ \bignorm{S_{\PE{T\De}}}^2 } \le C_{\mu,\PHI} (\eta-\ep) T$
and \eqref{aux} follows. It remains to prove that
%$$\Bignorm{(1-\frac{k_0\De}{T})(\eta-\ep)(\mathbf{Id-K})^{-1}\mu - \frac{1}{T} \sum_{k=k_0}^{\PE{T/\De}} E\bigpar{ N_{(k-1)\De_T+\eta\De}-N_{(k-1)\De+\ep\De}}} \le \tilde\xi_{\mu,\PHI}(T)$$
$$\big\|(1-\frac{k_0\De}{T})(\eta-\ep)(\mathbf{Id-K})^{-1}\mu 
-E\big({\mathcal D}_{k_0,\varepsilon,\eta}(N)_{T,\Delta}\big)\big\| \le \tilde\xi_{\mu,\PHI}(T)$$
where $\tilde\xi_{\mu,\PHI}(T) \to 0$ as $T\rightarrow \infty$. By Lemma \ref{repre}, we have
\begin{align*}
 & E\bigpar{ N_{(k-1)\De+\eta\De}-N_{(k-1)\De+\ep\De}} \\
 =\; & 
(\eta-\ep)\De \Bigpar{\mu+\int_0^{(k-1)\De+\ep\De} dr \PSI(r) \mu }\\
&+ \int_{(k-1)\De+\ep\De}^{(k-1)\De+\eta\De} dr \PSI(r) ((k-1)\De+\eta\De-r) \mu\\
=\; &(\eta-\ep)\De (\mathbf{Id-K})^{-1}\mu
- (\eta-\ep)\De \int_{(k-1)\De+\ep\De}^\infty dr \PSI(r) \\
 &+\int_{(k-1)\De+\ep\De}^{(k-1)\De+\eta\De} dr \PSI(r) \big((k-1)\De+\eta\De-r\big) \mu.
\end{align*}
Finally
$$\int_{(k-1)\De+\ep\De}^{(k-1)\De+\eta\De} dr \PSI(r) ((k-1)\De+\eta\De-r)
\le (\eta-\ep)\De \int_{(k-1)\De+\ep\De}^\infty dr \PSI(r).$$
We conclude noting that
\begin{align*}
\frac{\De}{T} \sum_{k=k_0}^{\PE{T/\De}} \int_{(k-1)\De+\ep\De}^\infty dr \PSI(r)&=
\frac{\De}{T} \int_{(k_0-1)\De+\ep\De}^\infty dr \PSI(r) \sum_{k=k_0}^{\PE{T/\De}} \ind{\set{(k-1)\De+\ep\De <r}}\\
&\le \frac{1}{T} \int_0^\infty dr\, \PSI(r) (r\land T), 
%&\to 0 \quad\text{since $\PSI$ is integrable (see the proof of Lemma \ref{EN})}.
\end{align*}
This last quantity converges to $0$ as $T \rightarrow \infty$ by an argument similar to the end of proof of Lemma, \ref{EN}, using that $\PSI$ is integrable.
\end{proof}

%\begin{lemma}
%There exists a constant $C_{\mu,\PHI}$ such that for all $t\le s$:
%$$E\Bigpar{ \Bignorm{\frac{1}{T} \sum_{k=1}^{\PE{T/\De}} \bigpar{ N_{k\De-s}-N_{(k-1)\De-t}} }^2 } \le C_{\mu,\PHI}$$
%and 
%$$E\Bigpar{ \Bignorm{\frac{1}{T} \sum_{k=1}^{\PE{T/\De}} \bigpar{ N_{k\De-s}-N_{(k-1)\De-t}}
%- \Bigpar{1-(s-t)/\De}^+ (\mathbf{Id-K})^{-1}\mu }^2 } \to 0.$$
%\end{lemma}

\begin{lemma} \label{bdg}
There exists a constant $C_{\mu,\PHI}$ such that for all $t,h\ge 0$, we have
$$ E\bigpar{ {\sup_{t\le s\le t+h}\norm{M_{s}-M_t}^4 } } \le C_{\mu,\PHI}\, (h+h^2)$$
\end{lemma}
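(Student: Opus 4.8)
The plan is to reduce the statement, componentwise, to a second--moment estimate on the increments of $N$ and then invoke the Burkholder--Davis--Gundy inequality at exponent $4$. Since $\norm{M_s-M_t}^4\le d\sum_{i=1}^d\abs{M_{i,s}-M_{i,t}}^4$, it is enough to bound $E\bigpar{\sup_{t\le s\le t+h}\abs{M_{i,s}-M_{i,t}}^4}$ for each fixed $i$. Fixing $t$, the process $s\mapsto M_{i,t+s}-M_{i,t}$ is a martingale for $(\F_{t+s})_{s\ge0}$ vanishing at $s=0$, and its quadratic variation at time $s$ equals $\bigcro{M_i,M_i}_{t+s}-\bigcro{M_i,M_i}_{t}=N_{i,t+s}-N_{i,t}$, exactly as in the proof of Lemma \ref{majM} (the jumps of $M_i$ have size $1$). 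The Burkholder--Davis--Gundy inequality then yields a universal constant $C_4$ with
$$ E\Bigpar{\sup_{t\le s\le t+h}\abs{M_{i,s}-M_{i,t}}^4}\le C_4\,E\bigpar{(N_{i,t+h}-N_{i,t})^2},$$
so the whole lemma reduces to showing that $E\bigpar{\norm{N_{t+h}-N_t}^2}\le C_{\mu,\PHI}(h+h^2)$ uniformly in $t\ge0$.

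For this I would use the decomposition $N_t=E(N_t)+M_t+Y_t$, with $Y_t=\int_0^t\PSI(t-s)M_s\,ds$, which is \eqref{repreN} of Lemma \ref{repre}. Writing $N_{t+h}-N_t$ as the sum of the three corresponding increments and using $\norm{a+b+c}^2\le3(\norm a^2+\norm b^2+\norm c^2)$, I control each piece separately. The deterministic piece obeys $\norm{E(N_{t+h})-E(N_t)}\le h\,\norm{(\mathbf{Id-K})^{-1}\mu}$, by the componentwise bound $E(N_{t+h}-N_t)\le h(\mathbf{Id-K})^{-1}\mu$ already established in the proof of Lemma \ref{majM}; this is an $O(h^2)$ term. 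The martingale piece obeys $E\bigpar{\norm{M_{t+h}-M_t}^2}\le C_{\mu,\PHI}h$ directly by Lemma \ref{majM}. For the convolution piece, the changes of variables $u=t+h-s$ and $u=t-s$ give
$$ Y_{t+h}-Y_t=\int_0^t\PSI(u)\bigpar{M_{t+h-u}-M_{t-u}}\,du+\int_t^{t+h}\PSI(u)\,M_{t+h-u}\,du;$$
on the first integral $[t-u,t+h-u]$ is an interval of length $h$ with nonnegative left endpoint, hence $\norm{M_{t+h-u}-M_{t-u}}_{L^2}\le(C_{\mu,\PHI}h)^{1/2}$ by Lemma \ref{majM}, and on the second $t+h-u\in[0,h]$, hence $\norm{M_{t+h-u}}_{L^2}\le(C_{\mu,\PHI}h)^{1/2}$ (Lemma \ref{majM} with $t=0$ and $M_0=0$). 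Since $\PSI$ is integrable, the generalized Minkowski inequality gives $\norm{Y_{t+h}-Y_t}_{L^2}\le2(C_{\mu,\PHI}h)^{1/2}\int_0^\infty\norm{\PSI(u)}\,du$, an $O(h)$ term. Summing the three contributions yields $E\bigpar{\norm{N_{t+h}-N_t}^2}\le C_{\mu,\PHI}(h+h^2)$, which combined with the Burkholder--Davis--Gundy step proves the lemma.

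The one point that needs care is precisely this reduction: it is tempting to compute $E\bigpar{(N_{i,t+h}-N_{i,t})^2}$ directly via the intensity by expanding $\lambda_{i,s}^2$, but that produces terms of the form $\int_0^s\phi_{ij}(s-r)^2E(\lambda_{j,r})\,dr$ which require $\phi_{ij}\in L^2$ and are not controlled under \eqref{sta} alone. Routing the bound through the martingale--plus--convolution representation $N=E(N)+M+Y$ and re--using Lemma \ref{majM} for the $L^2$--increments of $M$ circumvents this; everything else --- the passage to coordinates, the identification of $\bigcro{M_i,M_i}$, the change of variables and Minkowski's inequality --- is routine.
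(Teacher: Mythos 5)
Your proof is correct and follows essentially the same route as the paper: the Burkholder--Davis--Gundy inequality at exponent $4$ reduces the claim to $E\bigpar{(N_{i,t+h}-N_{i,t})^2}$, which is then bounded by $C_{\mu,\PHI}(h+h^2)$ through the representation \eqref{repreN} (martingale plus $\PSI\star M$ plus the mean, controlled by Lemma \ref{majM}, Minkowski's inequality, the integrability of $\PSI$ and $E(N_{t+h}-N_t)\le h(\mathbf{Id-K})^{-1}\mu$). Your version merely spells out more explicitly the splitting of the convolution increment that the paper compresses into one display, so no substantive difference or gap.
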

\begin{proof}
According to the Burkholder-Davis-Gundy inequality, we have
\begin{align*}
E\bigpar{ {\sup_{t\le s\le t+h}\norm{M_{s}-M_t}^4 } \mid \F_t} &\le C \sum_{i=1}^d E\bigpar{ \bigpar{[M_i,M_i]_{t+h}-[M_i,M_i]_t}^2  \mid \F_t}\\
&=C \sum_{i=1}^d E\bigpar{ \bigpar{N_{i,t+h}-N_{i,t}}^2  \mid \F_t},
\end{align*}
hence
$$E\bigpar{ {\sup_{t\le s\le t+h}\norm{M_{s}-M_t}^4 } }
\le C \bigpar{ E\bigpar{ \bignorm{X_{t+h}-X_{t}}^2 } + \bignorm{E\bigpar{N_{t+h}-N_t}}^2¬†}.$$
By Lemma \ref{repre}, we have
$$ E\bigpar{N_{t+h}-N_t} \le h(\mathbf{Id-K})^{-1} \mu \quad\text{componentwise}$$
and
$$ \Bigpar{ E\bigpar{ \bignorm{X_{t+h}-X_{t}}^2 }}^{1/2}
\le \int_0^\infty \norm{\PSI(s)} \Bigpar{E\bigpar{\bignorm{M_{t+h-s}-M_{t}}^2}}^{1/2} \, ds.$$
The conclusion follows using Lemma \ref{majM} and the fact that $\PSI$ is integrable.
\end{proof}

With the notation introduced in Section \ref{SP}, the quantity $\mathbf{V}_{\De,T}(M_{\cdot-s},M_{\cdot-t+\tau})$ is equal to
$$\frac{1}{T} \sum_{k=1}^{\PE{T/\De}} \bigpar{M_{k\De-s}-M_{(k-1)\De-s}}  \bigpar{M_{k\De-t+\tau}-M_{(k-1)\De-t+\tau}}^\T.$$
\begin{lemma} \label{VM}
For all $s,t\ge 0$ we have
$$\mathbf{V}_{\De_T,T}(M_{\cdot-s},M_{\cdot-t+\tau_T})- \bigpar{1-\tfrac{\abs{t-s-\tau_T}}{\De_T}}^+\, \mathbf{\Sigma} \to 0\;\;\text{as}\;\;T\rightarrow \infty
\;\;\text{in}\;\;L^2(P)$$
and
$$ E\bigpar{ \bignorm{\mathbf{V}_{\De_T,T}(M_{\cdot-s},M_{\cdot-t+\tau_T})}^2 } \le C_{\mu,\PHI}.$$
\end{lemma}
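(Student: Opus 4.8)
The plan is to fix $s,t\ge0$, set $a=a_T:=t-s-\tau_T$, write $\mathbf{V}$ for $\mathbf{V}_{\De_T,T}(M_{\cdot-s},M_{\cdot-t+\tau_T})=\tfrac1T\sum_{k=1}^{\PE{T/\De_T}}\Delta_k^{(1)}\bigpar{\Delta_k^{(2)}}^\T$, where $\Delta_k^{(1)}=M_{k\De_T-s}-M_{(k-1)\De_T-s}$ and $\Delta_k^{(2)}=M_{k\De_T-t+\tau_T}-M_{(k-1)\De_T-t+\tau_T}$, and to split each summand along the overlap of the two underlying intervals. These intervals both have length $\De_T$ and are shifted by $a$, so their intersection $[r_k,\rho_k]$ has length $\ell_T:=(\De_T-\abs{a})^+$; assuming for instance $a\ge0$, one writes $\Delta_k^{(1)}=W_k+A_k$ and $\Delta_k^{(2)}=B_k+W_k$ with $W_k=M_{\rho_k}-M_{r_k}$ the increment over the overlap and $A_k$ (resp.\ $B_k$) the increment over the length-$\abs{a}$ leftover piece to the right (resp.\ left) of $[r_k,\rho_k]$. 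Expanding the product and applying integration by parts to $W_kW_k^\T$, one obtains
$$\Delta_k^{(1)}(\Delta_k^{(2)})^\T=\mathrm{diag}_i\bigpar{N_{i,\rho_k}-N_{i,r_k}}+R_k,$$
where $R_k$ is a sum of finitely many products of martingale increments over temporally ordered intervals ($W_kB_k^\T,A_kB_k^\T,A_kW_k^\T$) together with the stochastic-integral part $W_kW_k^\T-\mathrm{diag}_i(N_{i,\rho_k}-N_{i,r_k})$ of $W_kW_k^\T$; since the $N_i$ have no common jumps (so $[M_i,M_j]=\delta_{ij}N_i$) each such piece has mean zero, and after summation over $k$ it becomes a matrix-valued stochastic integral $\int_0^T H_u\,(dM_u)^\T$ with $H$ predictable, supported on disjoint intervals $(\al_k,\be_k]$ of length $\le\De_T$, and with $\norm{H_u}$ on $(\al_k,\be_k]$ dominated by $\sup_{w\le v\le w+\De_T}\norm{M_v-M_w}$ for a suitable $w=w(k)$. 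Hence it remains to show: (i) $\tfrac1T\sum_k\mathrm{diag}_i(N_{i,\rho_k}-N_{i,r_k})\to(1-\abs{a}/\De_T)^+\mathbf{\Sigma}$ in $L^2(P)$; and (ii) $\tfrac1T\sum_k R_k\to0$ in $L^2(P)$.

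Claim (i) is of the form handled by Lemma \ref{VN}. Translating the sampling grid by the ($k$-independent) amount needed to align it with the overlap intervals — and correspondingly shifting the summation range to start at some $k_0$ with $k_0\De_T=o(T)$ — changes $\tfrac1T\sum_k\mathrm{diag}_i(N_{i,\rho_k}-N_{i,r_k})$ only by a bounded number of boundary summands plus the effect of the non-stationary transient in $E[N_t]$, of total size $O\bigpar{\De_T/T+T^{-1}\int_0^\infty(v\wedge T)\norm{\PSI(v)}\,dv}=o(1)$; after this reduction the left-hand side of (i) is ${\mathcal D}_{k_0,0,\,\ell_T/\De_T}(N)_{T,\De_T}$, which by Lemma \ref{VN} is $L^2$-close to $(1-\tfrac{k_0\De_T}{T})\tfrac{\ell_T}{\De_T}(\mathbf{Id-K})^{-1}\mu$, with $i$-th coordinate $(1-\tfrac{k_0\De_T}{T})(1-\abs{a}/\De_T)^+\mathbf{\Sigma}_{ii}=(1-\abs{a}/\De_T)^+\mathbf{\Sigma}_{ii}+o(1)$. (When $\abs{a}>\De_T$ the overlap is empty, $E(\mathbf{V})=\mathbf{0}$, and (i) is vacuous.)

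For (ii), the point is to resist expanding $\norm{\tfrac1T\sum_kR_k}^2$ as a double sum over $k$ — awkward because the increments of $M$ have random conditional second moments — and instead to use the identity $W_{k,i}W_{k,j}=\int_{r_k}^{\rho_k}(M_{i,u-}-M_{i,r_k})\,dM_{j,u}+\int_{r_k}^{\rho_k}(M_{j,u-}-M_{j,r_k})\,dM_{i,u}+\delta_{ij}(N_{i,\rho_k}-N_{i,r_k})$ and keep $\sum_kR_k$ assembled as one stochastic integral $\int_0^T H_u\,(dM_u)^\T$, whose second moment reduces to a single quadratic-variation computation. Using $[M_i,M_j]=\delta_{ij}N_i$ and Cauchy–Schwarz,
\begin{multline*}
E\Bignorm{\int_0^TH_u\,(dM_u)^\T}^2=\sum_{i,j}\sum_kE\Bigpar{\int_{\al_k}^{\be_k}(H_{u,i})^2\,dN_{j,u}}\\
\le\sum_{i,j}\sum_k\Bigpar{E\bigpar{(H^{(k)}_i)^4}}^{1/2}\Bigpar{E\bigpar{(N_{j,\be_k}-N_{j,\al_k})^2}}^{1/2},
\end{multline*}
where $H^{(k)}_i:=\sup_{w\le v\le w+\De_T}\abs{M_{i,v}-M_{i,w}}$ dominates $H_{\cdot,i}$ on $(\al_k,\be_k]$, so $E\bigpar{(H^{(k)}_i)^4}\le C_{\mu,\PHI}(\De_T+\De_T^2)$ by Lemma \ref{bdg}, and $E\bigpar{(N_{j,\be_k}-N_{j,\al_k})^2}\le C_{\mu,\PHI}(\De_T+\De_T^2)$ follows by writing $N=X+E[N]$ and combining $E\norm{X_{t+h}-X_t}^2\le C_{\mu,\PHI}\,h$ (from Lemma \ref{majM} and $\PSI\in L^1$, as in the proof of Lemma \ref{bdg}) with $E[N_{t+h}]-E[N_t]\le h(\mathbf{Id-K})^{-1}\mu$. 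Therefore $E\norm{\int_0^TH_u\,(dM_u)^\T}^2\le C_{\mu,\PHI}\,\PE{T/\De_T}(\De_T+\De_T^2)\le C_{\mu,\PHI}(T+T\De_T)$, and dividing by $T^2$ shows each contribution to (ii) is $O(T^{-1}+\De_T/T)=o(1)$. Combining (i) and (ii) gives the stated $L^2$ convergence, and the uniform bound $E\norm{\mathbf{V}}^2\le C_{\mu,\PHI}$ then follows from $E\norm{\mathbf{V}}^2=\norm{E(\mathbf{V})}^2+E\norm{\mathbf{V}-E(\mathbf{V})}^2$ together with $\norm{E(\mathbf{V})}\le\norm{\mathbf{\Sigma}}$ (alternatively, directly from the displayed quadratic-variation estimates).

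The step I expect to be the main obstacle is (ii). Because $M$ is a pure martingale, not a process with independent increments, its increments over disjoint intervals are merely uncorrelated and carry random conditional second moments, so the $k\neq l$ cross terms in a naive expansion of $E\norm{\tfrac1T\sum_kR_k}^2$ do not vanish and a term-by-term treatment is unwieldy. The remedy is to keep $\sum_kR_k$ in one piece as a stochastic integral against $dM$ with predictable, disjointly supported integrand, reducing the estimate to a single quadratic-variation bound, for which the fourth-moment inequality of Lemma \ref{bdg} is exactly the input needed after one Cauchy–Schwarz. A secondary technical point is uniformity in the varying grid $(\De_T,\tau_T)$: this is ensured because the bound $\xi_{\mu,\PHI}(T)$ of Lemma \ref{VN} is uniform in the sampling offsets and the constants in the quadratic-variation estimates do not involve $\De_T$.
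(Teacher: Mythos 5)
Your argument is correct and is essentially the paper's own proof: you split each product along the overlap of the two sampling intervals, replace the overlap-squared term by $\mathbf{diag}$ of $N$-increments via $[M_i,M_j]=\delta_{ij}N_i$, bound the remainder in $L^2$ by martingale orthogonality together with the fourth-moment estimate of Lemma \ref{bdg}, and conclude with Lemma \ref{VN} for the diagonal term. Packaging the remainder as stochastic integrals with predictable, disjointly supported integrands and invoking the It\^o isometry is just a reformulation of the paper's summand-by-summand computation (its estimates \eqref{disjoint} and \eqref{same}), not a genuinely different route.
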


\begin{proof}
We start with proving preliminary estimates. Let $b_1,b_2,b'_1,b'_2$ be real numbers such that $b_1\le b_2$, $b'_1\le b'_2$ and
$(b_1,b_2)\cap(b'_1,b'_2)=\emptyset$. Using that $M$ is a martingale, we successively obtain
\begin{align*} 
%\label{disjoint}
&E\big(\bignorm{T^{-1}\sum_{k=1}^{\PE{T/\De}} \bigpar{M_{(k-1)\De+b_2}-M_{(k-1)\De+b_1}}  \bigpar{M_{(k-1)\De+b'_2}-M_{(k-1)\De+b'_1}}^\T}^2\big)\\
=\;&\frac{1}{T^2} \sum_{k=1}^{\PE{T/\De}} E\big(\bignorm{\bigpar{M_{(k-1)\De+b_2}-M_{(k-1)\De+b_1}}  \bigpar{M_{(k-1)\De+b'_2}-M_{(k-1)\De+b'_1}}^\T}^2\big) \nonumber\\ 
=\;&\frac{1}{T^2} \sum_{k=1}^{\PE{T/\De}} E\big(\bignorm{M_{(k-1)\De+b_2}-M_{(k-1)\De+b_1}}^2 \bignorm{  \bigpar{M_{(k-1)\De+b'_2}-M_{(k-1)\De+b'1}}^\T}^2\big).
\end{align*}
By Cauchy-Schwarz, this last quantity is less than 
$$\frac{1}{T^2}\sum_{k=1}^{\PE{T/\De}} \big( E(\norm{M_{(k-1)\De+b_2}-M_{(k-1)\De+b_1}}^4)E(\bignorm{  \bigpar{M_{(k-1)\De+b'_2}-M_{(k-1)\De+b'_1}}^\T}^4) \big)^{\frac{1}{2}}$$
which in turn, using  Lemma \ref{bdg}, is bounded by
\begin{equation} \label{disjoint}
C_{\mu,\PHI} \frac{1}{T\De}  \bigpar{b_2-b_1+ (b_2-b_1)^2}^{1/2}
\bigpar{b'_2-b'_1+ (b'_2-b'_1)^2}^{1/2}.
\end{equation}
%where we used Lemma \ref{bdg} in order to obtain the last inequality.
Moreover, if $b_2-b_1 \le \De$, using that $[M,M]= \mathbf{diag}(N)$ and Lemma \ref{bdg}, we obtain that
\begin{align*} 
&E\Big(\Big\|\frac{1}{T} \sum_{k=1}^{\PE{T/\De}} \mathbf{diag}\bigpar{ N_{(k-1)\De+b_2}- N_{(k-1)\De_T+b_1} }\\
- &\frac{1}{T} \sum_{k=1}^{\PE{T/\De}}  \bigpar{M_{(k-1)\De+b_2}-M_{(k-1)\De+b_1}}  \bigpar{M_{(k-1)\De+b_2}-M_{(k-1)\De+b_1}}^\T\Big\|^2\Big)
\end{align*}
is less than
\begin{equation} \label{same}
C_{\mu,\PHI} \frac{1}{T\De}  \bigpar{b_2-b_1+ (b_2-b_1)^2}.
\end{equation}
We are ready to prove Lemma \ref{VM}. It is a consequence of Lemma \ref{VN} and the fact that there exists $a_2=a_2(s,t,\De_T,\tau_T)\le a_3=a_3(s,t,\De_T,\tau_T)$ such that
$$a_3-a_2= \bigpar{ \De_T- \abs{t-s-\tau_T}}^+, \quad
\frac{a_2}{T} \rightarrow 0\;\;\text{as}\;\;T\rightarrow \infty$$
holds, together with the estimate
\begin{equation} \label{aux2}
\begin{split}
%& a_3-a_2= \bigpar{ \De- \abs{t-s-\de}}^+, \quad
%\liminf_T a_2 / T =0\\
&E\big(\bignorm{ \frac{1}{T} \sum_{k=1}^{\PE{T/\De_T}} \mathbf{diag}\bigpar{ N_{(k-1)\De_T+a_3}- N_{(k-1)\De_T+a_2} }
- \mathbf{V}_{\De_T,T}(M_{\cdot-s},M_{\cdot-t+\tau_T}) }^2\big) \\
&\le C_{\mu,\PHI} \frac{1}{T}  \bigpar{1+\De_T}.
\end{split}
\end{equation}
Indeed, if $t-\tau_T+\De_T\le s$ or $s\le t-\tau_T-\De_T$ then the upper bound we obtained in \eqref{disjoint} entails
$$E\big(\bignorm{\mathbf{V}_{\De_T,T}(M_{\cdot-s},M_{\cdot-t+\tau_T})}^2\big) \le C_{\mu,\PHI} \frac{1}{T}  \bigpar{1+\De_T}.$$
Let us first consider the case $t-\tau_T\le s \le t-\tau_T+\De_T$. Set
$$ a_1:=s \le a_2:=-t+\tau_T \le a_3:=\De_T-s \le a_4:=\De_T-t+\tau_T.$$
We use the following decomposition
\begin{align*}
&\bigpar{M_{k\De_T-s}-M_{(k-1)\De_T-s}}  \bigpar{M_{k\De_T-t+\tau_T}-M_{(k-1)\De_T-t+\tau_T}}^\T
\\
=\,&\bigpar{M_{(k-1)\De_T+a_3}-M_{(k-1)\De_T+a_2}}  \bigpar{M_{(k-1)\De_T+a_3}-M_{(k-1)\De_T+a_2}}^\T \\
+& \bigpar{M_{(k-1)\De_T+a_3}-M_{(k-1)\De_T+a_1}}  \bigpar{M_{(k-1)\De_T+a_4}-M_{(k-1)\De_T+a_3}}^\T\\
 +&\bigpar{M_{(k-1)\De_T+a_2}-M_{(k-1)\De_T+a_1}}  \bigpar{M_{(k-1)\De_T+a_3}-M_{(k-1)\De_T+a_2}}^\T.
\end{align*}
On the one hand, \eqref{disjoint} readily yields that both
$$
 E\big(\bignorm{ \frac{1}{T} \sum_{k=1}^{\PE{T/\De_T}} \bigpar{M_{(k-1)\De_T+a_3}-M_{(k-1)\De_T+a_1}}  \bigpar{M_{(k-1)\De_T+a_4}-M_{(k-1)\De_T+a_3}}^\T}^2\big) 
%\leq &\,  C_{\mu,\PHI} (1+\De_T)/T
$$
and
%\begin{align*}
$$
E\big(\bignorm{ \frac{1}{T} \sum_{k=1}^{\PE{T/\De_T}}  \bigpar{M_{(k-1)\De_T+a_2}-M_{(k-1)\De_T+a_1}}  \bigpar{M_{(k-1)\De_T+a_3}-M_{(k-1)\De_T+a_2}}^\T }^2\big)
$$
are less than $C_{\mu,\PHI} (1+\De_T)/T$.
On the other hand, by \eqref{same}, the same estimate holds for
\begin{multline*}
E\big(\bignorm{ \frac{1}{T} \sum_{k=1}^{\PE{T/\De_T}} \mathbf{diag}\bigpar{ N_{(k-1)\De_T+a_3}- N_{(k-1)\De_T+a_2} }\\
- \frac{1}{T} \sum_{k=1}^{\PE{T/\De_T}}  \bigpar{M_{(k-1)\De_T+a_3}-M_{(k-1)\De_T+a_2}}  \bigpar{M_{(k-1)\De_T+a_3}-M_{(k-1)\De_T+a_2}}^\T}^2\big),
%\le C_{\mu,\PHI} (1+\De)/T,
\end{multline*}
therefore \eqref{aux2} holds in that case. If now $t-\tau_T-\De_T\le s\le t-\tau_T$, setting
$$ a_1:=-t+\tau_T\le a_2:=-s \le a_3:=\De_T-t+\tau_T \le a_4:=\De_T-s,$$
one readily checks that \eqref{aux2} holds using the same arguments.
\end{proof}

\begin{proof}[Completion of proof of Theorem \ref{SP}]
Since $X=M+Y$ by \eqref{defY}, we have the following decomposition
\begin{align*}
& \mathbf{V}_{\De,T}(X,X_{\tau+\cdot}) \\
=\; & \mathbf{V}_{\De,T}(M,M_{\tau+\cdot}) + \mathbf{V}_{\De,T}(Y,Y_{\tau+\cdot})
+\mathbf{V}_{\De,T}(M,Y_{\tau+\cdot})+\mathbf{V}_{\De,T}(Y,M_{\tau+\cdot}).
\end{align*}
Setting $M_t=0$ for $t\le 0$, we can write
$Y_t=\int_0^\infty \PSI(s) M_{t-s}\,ds$,
therefore
\begin{align*}
& (Y_{i\De}-Y_{(i-1)\De})(Y_{i\De+\tau}-Y_{(i-1)\De+\tau})^\T\\
=&\;\int_{\R_+^2} \!\! ds\, dt\, \PSI(s) \bigpar{M_{i\De-s}-M_{(i-1)\De-s}}  \bigpar{M_{i\De-t+\tau}-M_{(i-1)\De-t+\tau}}^\T \PSI(t)^\T,
\end{align*}
hence
$$ \mathbf{V}_{\De,T}(Y,Y_{\tau+\cdot}) =\int_{\R_+^2} \!\! ds\, dt\, \PSI(s) \mathbf{V}_{\De,T}(M_{\cdot-s},M_{\cdot-t+\tau})
\PSI(t)^\T.$$
Likewise 
$$ \mathbf{V}_{\De,T}(M,Y_{\tau+\cdot})=\int_0^\infty dt\, \mathbf{V}_{\De,T}(M,M_{\cdot-t+\tau}) \PSI(t)^\T,$$
$$\mathbf{V}_{\De,T}(Y,M_{\tau+\cdot})=\int_0^\infty dt\, \PSI(t) \mathbf{V}_{\De,T}(M_{\cdot-t},M_{\cdot+\tau}).$$
In view of Lemma \ref{VM} and the fact that $\PSI$ is integrable, by Lebesgue dominated convergence theorem, we successively obtain
$$ \mathbf{V}_{\De_T,T}(Y,Y_{\tau_T+\cdot}) - \int_{\R_+^2} \!\! ds\, dt\, \PSI(s)
\bigpar{1-\abs{t-s-\tau_T}/\De_T}^+\, \mathbf{\Sigma}\, \PSI(t)^\T \to 0,$$
$$ \mathbf{V}_{\De_T,T}(M,Y_{\tau_T+\cdot})- \int_0^\infty dt\,\bigpar{1-\abs{t-\tau_T}/\De_T}^+\, \mathbf{\Sigma}\,  \PSI(t)^\T \to 0,$$
and
$$\mathbf{V}_{\De_T,T}(Y,M_{\tau_T+\cdot})-\int_0^\infty dt\, \PSI(t) \bigpar{1-\abs{t+\tau_T}/\De_T}^+\, \mathbf{\Sigma} \to 0$$
in $L^2(P)$ as $T \rightarrow \infty$. The proof is complete.
\end{proof}

\begin{proof}[Proof of Corollary \ref{SPbis}]
In view of Theorem \ref{SP}, we need to show 
$$\mathbf{V}_{\De_T,T}(X,X_{\tau_T+\cdot}) - \mathbf{V}_{\De_T,T}(\tilde X,\tilde X_{\tau_T+\cdot}) \to 0\;\;\text{as}\;\;T\rightarrow \infty\;\;\text{in}\;\;L^2(P).$$
By Cauchy-Schwarz inequality, this convergence is a consequence of 
$$\sup_{\tau\in\R} \frac{1}{T} \sum_{k=1}^{\PE{T/\De_T}} \Bigpar{\eta_i(\tau+k\De_T)-\eta_i(\tau+(k-1)\De_T)}^2 \to 0,
\quad 1\le i\le d,$$
where $\eta(t)=\bigpar{t (\mathbf{Id-K})^{-1} \mu -  E(N_t)}\, \ind{\set{t\ge 0}}$.
By \eqref{difference}, for $t\ge 0$, we have the decomposition
$$\eta(t)=\Bigpar{ t\int_t^\infty \PSI(s) ds + \int_0^t \PSI(s) s ds} \mu$$
therefore the function $\eta$ is absolutely continuous and we have
$$ \eta'(t)=\int_t^\infty \PSI(s)ds \, \mu \,\ind{\set{t\ge 0}}.$$
We derive
$$\frac{1}{T} \sum_{k=1}^{\PE{T/\De_T}} \Bigpar{\eta_i(\tau+k\De_T)-\eta_i(\tau+(k-1)\De_T)}^2
\le \frac{\De_T}{T} \int_0^\infty \eta'_i(s)^2 ds. $$
It remains is to prove 
%\begin{equation} \label{sobolev}
$$
\int_0^\infty \norm{\eta'(t)}^2 dt <\infty.
%\end{equation}
$$
We have
$$ \eta'(t) \le t^{-1/2} \int_t^\infty s^{1/2} \PSI(s)\,ds \mu\;\;\text{and}\;\;\norm{\eta'(t)} \le C_{\mu,\PHI} t^{-1/2}$$
since $s\leadsto s^{1/2} \PSI(s)$ is integrable. Finally
\begin{align*}
\int_0^\infty \norm{\eta'(t)}^2 dt &  \leq  C_{\mu,\PHI} \int_0^\infty dt\, t^{-1/2} \int_t^\infty ds \norm{\PSI(s)} \\
& = 2\,C_{\mu,\PHI} \int_0^\infty ds \norm{\PSI(s)} s^{1/2}<\infty
\end{align*}
and the result follows.
\end{proof}

\section*{Appendix} 
\subsection*{Proof of Proposition \ref{ex4prop1}}
The spectral radius of
$\mathbf{K}=\left(\begin{smallmatrix}
0 & 0 & \smallint h & 0\\
0 & 0 & 0 & \smallint h\\
\smallint g & 0 & 0 & 0\\
0 & \smallint g & 0 & 0
\end{smallmatrix}\right)$
is equal to $\smallint g \smallint h$, and we have
$$ \bigpar{\mathbf{Id-K}}^{-1}
=\frac{1}{1-\smallint h\smallint g} \left(\begin{smallmatrix}
1 & 0 & \smallint h & 0\\
0 & 1 & 0 & \smallint h\\
\smallint g & 0 & 1 & 0\\
0 & \smallint g & 0 & 1
\end{smallmatrix}\right),$$
therefore
$$
\bigpar{\mathbf{Id-K}}^{-1} \mu=\frac{1}{1-\smallint h\smallint g}
\left(\begin{smallmatrix}
\nu_1 \\ \nu_1 \\ \nu_2 \\ \nu_2
\end{smallmatrix}\right)$$
where $\nu_1$ and $\nu_2$ are given by \eqref{defnu}. Set $X=N-E(N)$. 
By symmetry, we have $N=(N_2,N_1,N_4,N_3)$ in distribution, thus $E(N_{1,t})=E(N_{2,t})$ and $E(N_{3,t})=E(N_{4,t})$ for all $t$. Consequently
\begin{equation} \label{SX}
S_1=X_1-X_2 \quad\text{and}\quad S_2=X_3-X_4.
\end{equation}
According to Theorem \ref{TCL} the processes
$T^{-1/2}X_{Tv}$ converge in law to the process
$Y_v= (\mathbf{Id-K})^{-1} \mathbf{\Sigma}^{1/2} W_v$ with
$$\mathbf{\Sigma}=\frac{1}{1-\smallint h\smallint g}\left(\begin{smallmatrix}
\nu_1 & 0 & 0 & 0\\
0 & \nu_1 & 0 & 0\\
0 & 0 & \nu_2 & 0\\
0 & 0 & 0 & \nu_2
\end{smallmatrix}\right).$$
Therefore the processes $T^{-1/2}\bigpar{S_{1,Tv},S_{2,Tv}}_{v\in[0,1]}$
converge in distribution to $\bigpar{Y_1-Y_2,Y_3-Y_4}$ and Proposition \ref{ex4prop1} is proved.

\subsection*{Proof of Proposition \ref{ex4prop2}}

From \eqref{SX} it follows that
$$ V_{\De_T,T}(S_1,S_{1,\tau_T+\cdot})=
\left(\begin{smallmatrix} 1 & -1 & 0 & 0 \end{smallmatrix}\right)
V_{\De_T,T}(X,X_{\tau_T+\cdot})
\left(\begin{smallmatrix} 1 \\ -1 \\ 0 \\ 0 \end{smallmatrix}\right)$$
and
$$ V_{\De_T,T}(S_1,S_{2,\tau_T+\cdot})=
\left(\begin{smallmatrix} 1 & -1 & 0 & 0 \end{smallmatrix}\right)
V_{\De_T,T}(X,X_{\tau_T+\cdot})
\left(\begin{smallmatrix} 0 \\ 0 \\ 1 \\ -1 \end{smallmatrix}\right).$$
Consequently Proposition \ref{ex4prop2} follows from Theorem \ref{SP} with
$$ C_{11}(\De,\tau)=
\left(\begin{smallmatrix} 1 & -1 & 0 & 0 \end{smallmatrix}\right) \mathbf{v}_{\De,\de}
\left(\begin{smallmatrix} 1 \\ -1 \\ 0 \\ 0 \end{smallmatrix}\right),\ 
C_{12}(\De,\tau)=
\left(\begin{smallmatrix} 1 & -1 & 0 & 0 \end{smallmatrix}\right) \mathbf{v}_{\De,\de}
\left(\begin{smallmatrix} 0 \\ 0 \\ 1 \\ -1 \end{smallmatrix}\right).$$
It remains to compute $C_{11}$ and $C_{12}$.
%$\mathbf{v}_{\De,\de}$.
First we compute $\PSI = \sum_{n\ge 1} \PHI_n$.
We readily check that for all $s,t\ge 0$:
$\PHI(t-s)\PHI(s)=h(t-s)g(s)\mathbf{Id}$. Thus $\PHI_2= (h\star g) \mathbf{Id}$.
We derive
$$ \PHI_{2n}=(h\star g)^{\star n} \mathbf{Id}, \quad \PHI_{2n+1}=
\left(\begin{smallmatrix}
0 & 0 & (h\star g)^{\star n}\star h & 0\\
0 & 0 & 0 & (h\star g)^{\star n}\star h \\
(h\star g)^{\star n}\star g & 0 & 0 & 0\\
0 & (h\star g)^{\star n}\star g& 0 & 0
\end{smallmatrix}\right).$$
and we obtain
\begin{align*}
\PSI =&\sum_{n\ge 1} \PHI_{2n} + \sum_{n\ge 0} \PHI_{2n+1} \\
=& \begin{pmatrix}
F & 0 & (\de_0+F)\star h & 0\\
0 & F & 0 & (\de_0+F)\star h\\
(\de_0+F)\star g & 0 & F & 0\\
0 & (\de_0+F)\star g & 0 & F
\end{pmatrix}
\end{align*}
where $F=\sum_{n\ge 1} (h\star g)^{\star n}$. Set $\tilde F(ds) = \de_0(ds) + F(t)ds$. Standard computations yield
$$\mathbf{Id}\de_0(ds) + \PSI(s)ds=
\begin{pmatrix}
\tilde F(ds) & 0 & \tilde F\star h(s)ds & 0\\
0 & \tilde F(ds) & 0 & \tilde F\star h(s)ds\\
\tilde F\star g(s)ds & 0 & \tilde F(ds) & 0\\
0 & \tilde F\star g(s)ds & 0 & \tilde F(ds)
\end{pmatrix}
$$
%$$ \PSI\mathbf{\Sigma}=\begin{pmatrix}
%aF & 0 & b(\de_0+F)\star h & 0\\
%0 & aF & 0 & b(\de_0+F)\star h\\
%a(\de_0+F)\star g & 0 & bF & 0\\
%0 & a(\de_0+F)\star g & 0 & bF
%\end{pmatrix}
%$$
and
\begin{multline*}
\bigpar{\mathbf{Id}\de_0(ds) + \PSI(s)ds }\mathbf{\Sigma}\bigpar{\mathbf{Id}\de_0(dt) + \PSI(t)^*dt}
=\\
\frac{1}{1-\smallint h\smallint g}
\begin{pmatrix}
a_{11}(ds,dt) & 0 & a_{13}(ds,dt) & 0\\
0 & a_{11}(ds,dt) & 0 & a_{13}(ds,dt)\\
a_{31}(ds,dt) & 0 & a_{33}(ds,dt) & 0\\
0 & a_{31}(ds,dt) & 0 & a_{33}(ds,dt)
\end{pmatrix}
\end{multline*}
with:
$$ a_{11}(ds,dt)=\nu_1\tilde F(ds)\tilde F(dt)+ \nu_2\tilde F\star h(ds)\tilde F\star h(dt),$$
$$ a_{13}(ds,dt)=\nu_1\tilde F(ds) \tilde F\star g(dt)+
\nu_2 \tilde F\star h(ds)\tilde F(dt),$$
$$a_{31}(ds,dt)=\nu_2\tilde F(ds) \tilde F\star h(dt)+
\nu_1 \tilde F\star g(ds)\tilde F(dt),$$
$$ a_{33}(ds,dt)=\nu_2\tilde F(ds)\tilde F(dt)+ \nu_1\tilde F\star g(ds)\tilde F\star g(dt).$$
Therefore
$$
C_{11}(\De,\tau)=\int_{[0,\infty)^2} \ga_\De(t-s-\tau) 2 a_{11}(ds,dt),
$$
and
$$
C_{12}(\De,\tau)=\int_{[0,\infty)^2} \ga_\De(t-s-\tau) 2 a_{31}(ds,dt).
$$
To complete the proof of Proposition \ref{ex4prop2}, it suffices to use that for two finite measures $\mu$ and $\nu$ on $\R$ one has (for all $\De>0$, $\tau\in\R$)
$$ \int_{\R_+^2} \ga_\De(t-s-\tau) \mu(ds)\nu(dt)
=\ga_\De\star\nu\star\check \mu (\tau)$$
where $\check \mu$ is the image of $\mu$ by $x\leadsto -x$.
%\end{proof}

\section*{Acknowledgement} The research of E. Bacry and J.F. Muzy is supported in part by the {\it Chair Financial Risks of the Risk Foundation}. The research of M. Hoffmann is supported in part by the {\it Agence Nationale de la Recherche}, Grant No. ANR-08-BLAN-0220-01.
\bibliographystyle{plain}       % (uses file "plain.bst")
\bibliography{biblio}           % expects file "biblio.bib"
\end{document}